\numberwithin{equation}{section}
\newtheorem{theorem}{Theorem}[section]
\newtheorem{lemma}[theorem]{Lemma}
\newtheorem{proposition}[theorem]{Proposition}
\newtheorem{corollary}[theorem]{Corollary}
\theoremstyle{remark}
\newtheorem{remark}[theorem]{Remark}
\newtheorem{example}[theorem]{Example}
\numberwithin{equation}{section}
\numberwithin{theorem}{section}
\def\e{\varepsilon}
\def\CC{{\mathbb C}}
\def\RR{{\mathbb R}}
\def\NN{{\mathbb N}}
\def\codim{{\rm codim}\,}
\def\dist{{\rm dist}\,}
\def\Re{{\rm Re}\,}
\def\la{\lambda}
\def\al{\alpha}
\def\codim{{\rm codim}\,}
\def\ph{\varphi}
\def\NN{{\mathbb N}}
\def\CC{{\mathbb C}}
\def\RR{{\mathbb R}}
\def\Re{{\rm Re}\,}
\def\cM{\mathcal{M}}
\begin{document}
\title[On growth and global instability]{On growth and instability for semilinear evolution equations: an abstract approach}

\author{Vladimir M\"uller}
\address{V.M, Institute of Mathematics,
Czech Academy of Sciences,
ul. \v Zitna 25, Prague,
 Czech Republic}
\email{muller@math.cas.cz}

\author{Roland Schnaubelt}
\address{R.S., Karlsruhe Institute of Technology, Department of Mathematics, 76128 Karlsruhe, Germany}
\email{schnaubelt@kit.edu}

\author{Yuri Tomilov}
\address{Y.T., Institute of Mathematics, Polish Academy of Sciences,
\' Sniadeckich str.8, 00-656 Warsaw, Poland}
\email{ytomilov@impan.pl}

\address{
Faculty of Mathematics and Computer Science\\
Nicolas Copernicus University\\
ul. Chopina 12/18\\
87-100 Toru\'n, Poland
}

\subjclass{35B40, 37L15, 47H08,47J35}

\date{\today}

\thanks{The work of the first author was  partially supported 
by grant  No.20-22230L and RVO:67985840. The second author was funded by the Deutsche Forschungsgemeinschaft 
(DFG, German Research Foundation) -- Project-ID 258734477 -- SFB 1173. The third author was partially
supported by the NCN grant 2017/27/B/ST1/00078.}

\keywords{Instability, exponential growth, resolvent, compact, semilinear}

\begin{abstract}
We propose a new approach to the study of (nonlinear)
growth and instability for semilinear abstract evolution equations with compact nonlinearities. We show,
in particular, that compact nonlinear perturbations of linear evolution
equations can be treated as linear ones as far as the growth of their 
solutions is concerned. We obtain exponential lower bounds of solutions for initial values
from a dense set in resolvent or spectral terms. The abstract results are applied, in particular, to the
study of energy growth for semilinear backward damped wave equations.
\end{abstract}

\maketitle
\section{Introduction}\label{intro}
The paper is devoted to the study of instability of solutions to semilinear evolution equations
\begin{equation}\label{solaeq}
x'(t)=A x(t)+K(t, x(t)), \quad t \ge 0, \qquad x(0)=x_0 \in X,
\end{equation}
on a Banach space $X$, where $A$ generates a $C_0$-semigroup on $X$
and $K$  is a nonlinear map on $X$ subject to appropriate conditions
ensuring the existence of global mild solutions to \eqref{solaeq}.
While the problem of finding instability conditions for \eqref{solaeq}
in terms of $A$ and $K$ is of fundamental importance, very few
results in this direction were obtained so far even when $K$ is stationary, i.e., $K(t,x)=K(x)$
for $t\ge 0.$ 

One of the basic and commonly used instability criteria is due to Shatah and Strauss, \cite{Shatah1}.
It says that the zero solution of the system governed by \eqref{solaeq} is  unstable 
if the spectral bound of $A$ is greater than zero and $K$ is small enough in a metric sense 
(that is,  $K(x)={\rm O}(\|x\|^{1+\delta})$ with $\delta>0$ as  $\|x\| \to 0$) and continuous.
The notion of smallness can be refined by replacing a polynomial bound for $K$ with a bound
of more general type, but the scheme of proof remains the same all the time.
For bounded $A$ such criteria go back to \cite{dalec}.
This linearized instability principle appeared to be very useful
in a great number of applications.
Other versions of such results are investigated in \cite{Grill0}, \cite{Grill1}, and \cite{Grill2}.

Recently, the problem of keeping control over asymptotic behavior of
trajectories of linear infinite-dimensional systems under ``small'' nonlinear
perturbations was revived in \cite{Gallay}, \cite{Garab}, and \cite{Sola_Rod}.
There the opposite problem of stabilization by the nonlinearity was emphasized.
Note that these papers treated the situation of discrete time,
where a number of difficulties (e.g.\ failure of the spectral mapping theorem)
is missing.

In this paper, we address the situation when $K$ is a compact nonlinear map, i.e., 
$K$ is small in a topological sense, but otherwise it can be large metrically.  This type 
of perturbations allows for global instability results, instead of just local ones. 
Recall that the map $K: X \to X$ is called 
\emph{compact} if it is continuous (this assumption varies) and maps bounded sets to precompact sets.
Assuming that $K$ is a compact $C^1$-map, the paper \cite{Sola} considered the case that either the (Browder)
essential spectrum of $A$ intersects the open right half-plane $\CC_+$ or that there are infinitely many 
eigenvalues of $A$ in $\e +\CC_+$ for some $\e >0$. Then for a residual set of initial values $x_0$ 
the mild solutions $\{x(t, x_0): 0\le t < T\}$ of \eqref{solaeq} are unbounded on their maximal existence
interval $[0,T)$, where $0< T \le \infty.$ The proof relied on a discrete version of this result:
If $A$ is a bounded linear operator on $X$ with the essential spectral radius $r_{e}(A)>1$ and $T=A+K,$
then the trajectory $\bigcup_{n \in \mathbb N} T^n(B)$ of the unit ball $B \subset X$ is unbounded in $X$.

We substantially improve these results and, for the continuous time version \eqref{solaeq},  
replace the spectral terms used in \cite{Sola} by much weaker assumptions on resolvent bounds.
While the results in \cite{Sola} yield merely unboundedness (and then instability), we derive an optimal
exponential lower bound. To the best of our knowledge,  such results were absent in the literature.
In fact, our approach goes much further and allows one to treat $K$ depending on time.
Moreover, the obtained results cannot be improved even for stationary linear compact perturbations,
see Remark~\ref{rem:opt}.
To this aim we use techniques for the study of orbits of linear operators from \cite{Muller_Tom}
that, with certain modifications, work in the context of nonlinear maps as well.
This approach is used for the treatment of nonlinear equations for the first time, and
we hope it will be useful in a number of instances.

The following statement is an important partial case of our main results. See Theorem~\ref{main} combined with
Theorem~\ref{proplower}.
(A linear version of this result is given in Corollary \ref{linear}.)

\begin{theorem}\label{main0}
Let $A$ generate a $C_0$-semigroup $(T(t))_{t\ge 0}$  on a Banach space $X,$
with $\alpha:=\limsup_{t \to 0}\|T(t)\|,$
and let $K:[0,\infty) \times X \to X$
be jointly continuous and  map bounded sets in precompact ones.
Let $B\subset X$ be a bounded set. Assume that
for every $x_0\in B$ there exist a global mild  solution $x(t,x_0)$, $t\in[0,\infty),$ of \eqref{solaeq}
such that the set $S(t_0,B)=\{x(t, x_0):\, 0\le t\le t_0, \,\, x_0 \in B\}$
is bounded for each $t_0>0$.
Let $a:[0,\infty)\to [0,\infty)$ be non-increasing  with
$\lim_{t\to\infty}a(t)=0$, and let $r>0$, $y\in B$ and $t_0=t_0(r)\ge 0$  be such that $a(t_0)<\frac{r}{2\alpha}$
and $\overline{B}(y,r)\subset B$.
\begin{itemize}
\item [(i)] Assume that the resolvent $(\omega+ib-A)^{-1}$ is well-defined and unbounded for $b\in\RR$
and some  $ \omega \in\RR$.
Then there exist $x_0 \in \overline{B}(y,r)$ and a mild solution $x(t,x_0)$ of \eqref{solaeq}  such that
$$
\|x(t,x_0)\|\ge a(t) e^{\omega t}, \qquad t\ge t_0.
$$
\item [(ii)] In any case,
there exist $x_0 \in \overline{B}(y,r)$ and a mild solution $x(t,x_0)$ of
\eqref{solaeq}  such that
$$
\|x(t,x_0)\|\ge a(t) e^{s_e(A) t}, \qquad t\ge t_0,
$$
\end{itemize}
where $s_e(A)$ is the supremum of the real parts of $\lambda$ from the essential
spectrum of $A$.
\end{theorem}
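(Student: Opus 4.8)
The plan is to obtain $x_0$ as the limit of a sequence $(x_k)_{k\ge 0}$ in $\overline B(y,r)$, starting from $x_0^{(0)}:=y$, built together with times $t_0<\tau_1<\tau_2<\dots\to\infty$, so that at each stage $\|x(t,x_k)\|\ge a(t)e^{\omega t}$ holds on $[t_0,\tau_k]$ with a small reserve that is never completely consumed, and so that all the estimates survive the passage to the limit.

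\emph{The linear engine.} From the hypothesis of (i) one extracts approximate eigenvectors: since $\omega+ib-A$ is invertible with $\|(\omega+ib-A)^{-1}\|$ unbounded in $b$, there are $b_n\in\RR$ and unit vectors $v_n$ with $\|(A-\omega-ib_n)v_n\|\to0$. Differentiating $s\mapsto e^{-(\omega+ib_n)s}T(s)v_n$ and using $\|T(s)\|\le Me^{ws}$, one gets for each $\tau>0$ and $\e>0$ a threshold $N(\tau,\e)$ such that $\|T(t)v_n\|\ge(1-\e)e^{\omega t}$ for all $t\in[0,\tau]$ whenever $n\ge N(\tau,\e)$: the $v_n$ are directions along which the linear flow is guaranteed to grow like $e^{\omega t}$ on windows of arbitrary prescribed length. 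The constant $\alpha=\limsup_{t\to0}\|T(t)\|$ converts the size of a perturbation of $x_0$ into its effect at small times, and the standing assumption $a(t_0)<r/(2\alpha)$, together with $\overline B(y,r)\subset B$, is exactly what makes the budget $r$ large enough to force $\|x(t_0,x_0)\|\ge a(t_0)e^{\omega t_0}$ and thereby start the induction.

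\emph{The inductive step.} Suppose $x_{k-1}$ and $\tau_{k-1}$ are given. On $[0,\tau_{k-1}]$ all solutions issued from $B$ lie in the bounded set $S(\tau_{k-1},B)$, so along $x_k\to x_{k-1}$ the maps $K(\cdot,x(\cdot,x_k))$ are uniformly bounded and, by compactness of $K$, the integral terms in the variation-of-constants formula are equicontinuous and relatively compact in $C([0,\tau_{k-1}],X)$; an Arzel\`a--Ascoli/diagonal argument then selects mild solutions $x(\cdot,x_k)$ converging uniformly on $[0,\tau_{k-1}]$ to a mild solution, so that for $\delta_k:=\|x_k-x_{k-1}\|$ small the reserve on $[t_0,\tau_{k-1}]$ is preserved. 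On the new window we take $x_k=x_{k-1}+\delta_k v_{n(k)}$ with $v_{n(k)}$ an approximate eigenvector of large index. Writing the solution via the variation-of-constants formula and unfolding the earlier perturbations, $x(t,x_k)=\delta_k T(t)v_{n(k)}+g_k(t)$, where $g_k(t)$ lies, up to an error $\e_k$, in a fixed finite-dimensional subspace $F_k$: indeed $g_k(t)$ is the sum of the single orbit $T(t)y$, of the earlier contributions $\delta_i T(t)v_{n(i)}$ (which lie, up to small error on the relevant window, in $\mathrm{span}\{v_{n(i)}:i<k\}$), and of the nonlinear terms, which --- because $K$ sends bounded sets to \emph{precompact} ones --- range over the closed convex hull of a compact subset of $X$ and hence sit $\e_k$-close to a finite-dimensional space. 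Choosing $n(k)$ so large that $v_{n(k)}$ stays a definite distance from $F_k$ (possible unless some $\omega+ib$ is an honest eigenvalue, a case handled by the same scheme with its eigenvector; an escaping sequence exists because, when $b_n\to\infty$, $(v_n)$ has no norm-Cauchy subsequence and so leaves every fixed finite-dimensional subspace), we get $\|x(t,x_k)\|\ge\dist(\delta_k T(t)v_{n(k)},F_k)-\e_k\gtrsim\delta_k e^{\omega t}-\e_k$ on the window. Since $a$ is non-increasing, this exceeds $a(t)e^{\omega t}$ for all $t\in[\tau_{k-1},\tau_k]$ as soon as $\delta_k\gtrsim a(\tau_{k-1})$ and $\e_k$ is small --- so the perturbation only has to beat the \emph{fixed} number $a(\tau_{k-1})$, not track $a$ --- and since $\tau_{k-1}\to\infty$ forces $a(\tau_{k-1})\to0$, the $\delta_k$ can be kept summable. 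This is precisely what yields the bound for \emph{all} $t\ge t_0$ (not merely a $\limsup$) after passing to the limit. The length $\tau_k-\tau_{k-1}$ is finally chosen short enough, relative to $\delta_k$ and to the local bound of $K$ on $S(\tau_k,B)$, that the nonlinear contribution accrued over the window is dominated there.

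\emph{Part (ii) and the main obstacle.} For $\lambda_0$ in the essential spectrum of $A$ there is a singular sequence, i.e. unit vectors with $(A-\lambda_0)v_n\to0$, and the same computation gives growth $\ge(1-\e)e^{(\Re\lambda_0)t}$ on long windows; if $s_e(A)$ is attained this is (i) with $\omega=s_e(A)$, and otherwise one runs the construction diagonally over $\lambda_0^{(k)}$ with $\Re\lambda_0^{(k)}\uparrow s_e(A)$, choosing on the $k$-th window $\lambda_0^{(k)}$ so close to $s_e(A)$ that $e^{(s_e(A)-\Re\lambda_0^{(k)})\tau_{k-1}}a(\tau_{k-1})$ is small and then a window on which $e^{(\Re\lambda_0^{(k)})t}$ still beats $a(t)e^{s_e(A)t}$ --- possible because $a$ is non-increasing --- which forces $\tau_k\to\infty$. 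The hard part throughout is the reconciliation in the inductive step: forcing $e^{\omega t}$-growth on windows whose right endpoints tend to $\infty$ while the corrections to the datum must form a summable (hence very small) sequence, and simultaneously controlling the nonlinear term --- for which the hypotheses grant only boundedness on $S(t_0,B)$ and precompactness --- so that it spoils neither the growth on the new window nor the bound already in force on the old one. Balancing window lengths against perturbation sizes and against the possibly large local size of $K$, and using compactness to confine the nonlinear contributions to an essentially finite-dimensional set transverse to the perturbation direction, is the delicate point where the orbit techniques of \cite{Muller_Tom} must be genuinely adapted; the Arzel\`a--Ascoli selection of limit solutions, the bookkeeping of the shrinking reserve, and the final limit are then routine.
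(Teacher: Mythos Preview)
Your broad strategy matches the paper's: approximate eigenvectors $v_n$ with $\|(A-\omega-ib_n)v_n\|\to0$ give directions along which $T(t)$ grows like $e^{\omega t}$ on arbitrarily long windows, and the compactness of $K$ confines the nonlinear integral term to a neighbourhood of a finite-dimensional subspace (this is Corollary~\ref{lemcont}), transverse to which one can pick the next perturbation direction. Both arguments write the sought initial value as $y$ plus a summable series in such directions.

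The difference, and the gap in your proposal, is how the lower bound is carried across all windows. You establish $\|x(t,x_k)\|\ge a(t)e^{\omega t}$ on $[t_0,\tau_k]$ inductively: the new term $\delta_k T(t)v_{n(k)}$ handles the new window $[\tau_{k-1},\tau_k]$, and the old window $[t_0,\tau_{k-1}]$ is handled by ``preservation'', i.e.\ by $x(\cdot,x_k)$ being close to $x(\cdot,x_{k-1})$ when $\delta_k$ is small. But the theorem does \emph{not} assume uniqueness or continuous dependence on initial data --- only the existence of some solution for each $x_0\in B$ with a uniform boundedness property (this is condition~(A2)). Your Arzel\`a--Ascoli step shows that along any sequence of data tending to $x_{k-1}$, a subsequence of the given solutions converges to \emph{some} mild solution from $x_{k-1}$; but that limit need not be the particular solution for which you had the estimate at step $k-1$, so the reserve is not preserved and the induction does not close.

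The paper avoids this by never looking at solutions from intermediate data. It fixes geometrically decreasing coefficients $c_k$ summing to $r$ in advance, builds $z=y+\sum_{k\ge1}\alpha^{-1}c_kx_k$, and estimates the single solution $x(\cdot,z)$ given by (A2). The device replacing your preservation step is a \emph{two-sided} almost-orthogonality (Lemma~\ref{geomlemma}): at stage $k$ one chooses a finite-dimensional $F_k$ absorbing both the compact nonlinear piece and the full orbit $\{T(t)(y+\sum_{j<k}\alpha^{-1}c_jx_j):t\le t_k\}$, and then a nested finite-codimensional $M_k\subset M_{k-1}$ of the form $\bigcap_j\ker y_j^*$ with $y_j^*\in D(A^*)$, satisfying $\|f+m\|\ge(1-\e_k)\max\{\|f\|,\|m\|/2\}$ for $f\in F_k$, $m\in M_k$. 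The approximate eigenvector $x_k$ is then chosen \emph{inside} $M_k$: this is the content of admissibility (Theorem~\ref{proplower}, proved via Lemma~\ref{apprspec} and a renorming making $D(A^*)$ norming), and it is what your ``no Cauchy subsequence'' remark gestures at but does not quite deliver. Because the $M_k$ are nested, for $t\in[t_{k-1},t_k]$ the earlier terms lie in $F_k$ and the later terms lie in $M_{k+1}\subset M_k$, so two applications of the inequality isolate $c_kx_k$ and give the bound directly --- no continuity in the data, no limit of solutions. For part~(ii) no diagonal procedure over $\lambda_0^{(k)}\uparrow s_e(A)$ is needed either: any $\mu\in\partial\sigma_e(A)$ with $\Re\mu=s_e(A)$ already yields approximate eigenvectors in every such finite-codimensional $M$ (Lemma~\ref{codim}), so $s_e(A)$ is itself admissible and the same argument applies verbatim.
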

In the above theorem one can replace the assumption on existence of
global mild solutions $x(t, x_0)$ with bounded sets $S(t_0,B)$ by the (linear) growth assumption
 $\|K(t,x)\|\le (1+c(t))\|x\|$ for some $c\in L^1_{\mathrm{loc}}([0,\infty))$ and all $t\ge0$
and $x\in X$. See Proposition~\ref{prop:growth} and Corollary~\ref{thm:cp2}.
In the case of backward damped wave equations considered in Section~\ref{sec:exa}, the condition of linear growth
of $K$ will be generalised by requiring that $K$ satisfies a sign condition and possesses some extra regularity.

To the best of our knowledge, the result is genuinely new in three respects:
\begin{itemize}
\item [(a)] it is the first lower bound for global growth of solutions to a general class of  nonlinear evolution equations in the
    literature (and in fact, it is new even in the setting of linear equations);
\item [(b)] the result is formulated in explicit (a priori) spectral terms;
\item [(c)] the results is sharp, even in a linear context (see Remark~\ref{rem:opt}).
\end{itemize}

To provide a necessary insight and because of independent interest,
we first develop a similar theory for the discrete counterpart of \eqref{solaeq}
\begin{equation}\label{solaeq1}
x_{n+1}= A x_n + K_n (x_n), \qquad x_0=x\in X, \qquad n \ge 0,
\end{equation}
where now $A$ is a bounded linear operator on a Banach space $X,$ and $K_n$ are compact maps on $X.$
In particular, we obtain an analogue of Theorem \ref{main0}  for  \eqref{solaeq1} only assuming
compactness of each map $K_n$, where  the lower bounds depend on the
essential spectral radius of the operator coefficient. 
\begin{theorem}\label{main10}
Let $A$ be a bounded linear operator on a Banach space $X$, $(K_n)_{n=1}^\infty$ be a sequence of
compact maps on $X$, and $(x_n(x_0))_{n=1}^\infty$ be given by \eqref{solaeq1}.
Take a non-increasing sequence $(a_n)_{n=1}^{\infty}\subset \mathbb R_+$ satisfying
$\lim_{n\to\infty}a_n=0$. Fix $y\in X$, $r>0$, and $n_0\in \mathbb N$ with
$a_{n_0}<\frac{r}{2}$. Then there exists  $x_0\in B(y,r)$ such that
$$
\|x_n(x_0)\|\ge a_n r_{e}(A)^n,\qquad n\ge n_0,
$$
where $r_{e}(A)$ is the essential spectral radius of $A.$
\end{theorem}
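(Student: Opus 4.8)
The plan is to reduce the claimed lower bound to a statement about the essential spectral radius via a "peripheral projection" argument and then exploit compactness of the $K_n$ to ignore the nonlinear terms on a suitable subspace. First I would recall that, by Nussbaum's formula, $r_e(A)=\lim_n \|A^n\|_e^{1/n}$ where $\|\cdot\|_e$ is the essential (Calkin) norm, and that for any $\rho<r_e(A)$ there is no way to write $A^n=C_n+F_n$ with $\|C_n\|\le M\rho^n$ and $F_n$ of finite rank uniformly; more precisely, for such $\rho$ the operators $A^n/\rho^n$ are not uniformly bounded modulo compacts. The key quantitative input is exactly the linear machinery behind Theorem~\ref{main0}(ii): for the purely linear equation $x_{n+1}=Ax_n$ one can, for initial data in a dense set (indeed in any ball $B(y,r)$), force $\|A^n x_0\|\ge a_n r_e(A)^n$ for $n\ge n_0$, using the orbit-constructing techniques of \cite{Muller_Tom}. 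I would isolate this linear fact as the engine and then perturb.

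The heart of the matter is handling the nonlinear corrections $K_n$. Here I would use the hypothesis that each $K_n$ is compact together with the fact that the whole construction only ever needs the orbit to pass through a fixed bounded set (the ball $B(y,r)$ and its iterates stay controlled because $a_n\to 0$ forces us to compare against $a_n r_e(A)^n$, which is subexponential relative to any $\rho>r_e(A)$, so on the relevant scale the $K_n$-images form a precompact, hence "small modulo finite rank", family). Concretely: given $\e>0$, compactness lets us approximate each $K_n$ on the relevant bounded set by a map into a finite-dimensional subspace up to error $\e$; the finite-dimensional part can be absorbed into the "thin" directions that the \cite{Muller_Tom}-type construction discards, while the $\e$-error is summable after multiplying by the geometric weights. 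This is precisely the mechanism by which "compact perturbations behave like linear equations for growth purposes," as advertised in the abstract, and I expect the paper to have set up an abstract lemma (a nonlinear analogue of the linear orbit lemma) doing exactly this bookkeeping.

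Assembling the pieces: fix the target sequence $a_n$ and the data $y,r,n_0$ with $a_{n_0}<r/2$. I would run an inductive/limiting construction producing $x_0\in B(y,r)$: at stage $n$ one perturbs the current candidate within a shrinking ball (the halving $a_{n_0}<r/2$ is what guarantees the total perturbation stays inside $B(y,r)$, exactly as in the statement of Theorem~\ref{main0}) so as to guarantee $\|x_n(x_0)\|\ge a_n r_e(A)^n$ while not destroying the previously arranged inequalities; compactness of $(K_n)$ ensures the "bad" directions one must avoid at each stage span only a finite-dimensional (or precompact) set, leaving enough room. Passing to the limit of the nested balls yields the desired $x_0$, and the mild-solution/continuity hypotheses (here trivial since \eqref{solaeq1} is an explicit recursion) guarantee $x_n(x_0)$ is the genuine orbit. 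The main obstacle, and the step requiring the most care, is making the perturbation argument compatible with \emph{all} indices $n\ge n_0$ simultaneously rather than one at a time: one needs the weights $a_n$ decaying and the geometric factor $r_e(A)^n$ to interact correctly so that the sum of admissible perturbations converges and the limiting $x_0$ still satisfies every inequality — this is where the precise form of the \cite{Muller_Tom} orbit construction, adapted to tolerate compact nonlinear noise, does the real work.
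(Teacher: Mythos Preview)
Your proposal captures the right ingredients at the level of intuition---compactness of the $K_n$ forces the nonlinear corrections into a precompact set, which can be approximated by a finite-dimensional subspace (this is exactly Lemma~\ref{lem1} and Lemma~\ref{lem2} in the paper)---but it misses the decisive trick and, as a result, the step you yourself flag as ``the main obstacle'' is not actually overcome.

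The gap is this: a sequential perturbation scheme of the kind you describe, where at stage $n$ you adjust the candidate $x_0$ to secure $\|x_n(x_0)\|\ge a_n r_e(A)^n$, does not in general preserve the inequalities already arranged for $m<n$, because the linear dynamics $A^m$ acting on the new perturbation direction can be arbitrary. This kind of argument naturally produces lower bounds along a \emph{subsequence} (and indeed that is how the paper proves the residual-set result, Theorem~\ref{subeq}), but not for \emph{all} $n\ge n_0$. Your appeal to ``the precise form of the \cite{Muller_Tom} orbit construction'' at this point is a placeholder, not an argument.

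What the paper actually does is eliminate the difficulty by a preliminary reduction you do not mention. After rescaling to $r_e(A)=1$ and rotating so that $1\in\partial\sigma_e(A)$, the operator $A-I$ fails to be upper semi-Fredholm; Proposition~\ref{Sch} then supplies a compact \emph{linear} $\widetilde K$ with $\dim\mathrm{Ker}(A-I-\widetilde K)=\infty$. Absorbing $\widetilde K$ into the $K_n$, one may assume $\dim\mathrm{Ker}(A-I)=\infty$. Now the initial point is built as $x_0=y+\sum_{j\ge1}c_jx_j$ with unit vectors $x_j\in\mathrm{Ker}(A-I)$ chosen inside a nested sequence of finite-codimensional subspaces $M_j$ (from Lemma~\ref{geomlemma}) that avoid the finite-dimensional subspaces $F_j$ approximating the compact correction sets. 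Because $A^nx_j=x_j$ for \emph{every} $n$, the linear part of the orbit is $A^ny+\sum_j c_jx_j$ independently of $n$, and the lower bound on the window $n_{k-1}\le n\le n_k$ is carried by the single term $c_kx_k$. There is no tension between different time indices to manage: the eigenvector reduction makes the construction static rather than sequential. This reduction is the idea you are missing.
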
 
This result is proved in Theorem~\ref{theorem1}. If $r_e(A)>1,$ then the orbits $x_n(x_0)$ grow exponentially
for a dense set of initial values $x_0.$ Thus, if the nonlinear part $K$ is small in a topological sense rather
than in the sense of norm,
Theorem \ref{main10} provides a global generalization of the classical (discrete) principle of  linearized instability,
as discussed e.g.\ in \cite[Theorem 5.1.5]{Henry} or \cite[Sections 1.2 and 4]{Gallay}.
Note that the proofs of a number of statements on instability for continuous time are reduced to similar considerations
in the discrete setting (as e.g.\ in \cite{Henry}). Also our discrete results are essentially optimal,
cf.\ \cite[Sections V.37 and V.39]{Muller} for the case $K=0$.

Using measures of non-compactness, we further obtain similar results producing residual sets of solutions
to \eqref{solaeq} and \eqref{solaeq1} with exponentially growing orbits. See Theorems~\ref{subseq} and \ref{subeq}, respectively.

To explain the general effects, we note that as remarked already in \cite{Sola},
 roughly speaking, the linear semigroup $(T(t))_{t \ge 0}$ is expanding along
infinitely many independent directions and the non-linear perturbation $K$ (being
relatively compact) is not able to compensate this expansion, except perhaps in a
finite number of directions. Our results say that up to a small multiplicative correction the expansion
takes place as if the nonlinear part in \eqref{solaeq} is absent.

The next toy example illustrates the specifics of the infinite-dimensional setting very well.
Let $X$ be a separable Hilbert space.
In the Banach space $B(X)$ of bounded linear operators on $X,$ consider the difference equation
\begin{equation*}
Y_{n+1}=A Y_n + K Y_n^2, \qquad Y_0 \in B(X), \qquad n \in \mathbb N.
\end{equation*}
If $X$ is finite-dimensional, then even if ${\rm dim} \, X = 1$ the asymptotic behavior of $\{Y_n\}$ could be
extremely complicated  (see e.g.\ \cite{Block}), and any bounds for $\|Y_n\|$ can hardly put under control for
\emph{individual} $Y_0.$ However, if ${\rm dim} \, X = \infty,$ and $K \in B(X)$ is compact, then the quadratic
part $Y\mapsto K Y^2$, being compact in $B(X),$ becomes ``small'' with respect to the linear part $Y \mapsto AY$.
Hence, by our Theorem \ref{theorem1} below,  the (exponential) growth of trajectories $\{Y_n\}$  for a residual set
of initial values is determined by the essential spectral radius of the linear part $L_A: Y\mapsto AY$
(i.e., by the essential spectral radius of $A$ if one notes that 
$\sigma_{e}(L_A)=\sigma(A)$ by e.g.\ \cite[Theorem 3.1]{Fialkow}).

 Aiming at generalizations of local instability results as in e.g.\
\cite{dalec}, \cite{Henry}, \cite{Shatah1} and \cite{Gallay}, it is natural to consider also nonlinear
perturbations $K+G$ with a compact operator $K$ and
 a metrically small map $G:B(0,r)\to X$ satisfying $G(x)=O(\|x\|^{1+\delta})$ as $\|x\| \to 0.$
 However, at least in the time discrete case given by  \eqref{solaeq1} we show
 in Section~\ref{local_discr} that  a local instability result analogous to Theorem~\ref{main10}
 cannot hold for such perturbations $K+G$.

As an illustration of our abstract results, we  apply them to certain ``excited'' and backward damped wave equations
in Examples~\ref{sola}, \ref{cox}, \ref{sjostrand}, and \ref{dyatlov}. Here we allow for nonlinear forcing terms $f(t,x,u)$ in
which the scalar function $f$ may grow superlinearly (if it has the  right sign) and is only continuous
in the last argument.
In general, the study of damped wave equations is an extremely vast and challenging area of research with
many open problems stemming from mathematical physics and control theory.
One may consult, e.g., the books \cite[Chapter 6]{LeR} and \cite[Chapters 10, 11]{LeR1} the survey \cite{Chill}, and the papers
  \cite{Anan_L}, \cite{Asch}, \cite{BurqC},  \cite{Chill1}, \cite{Dyatlov}, \cite{Jin}, \cite{JL},
 \cite{JL1}, \cite{Nonnen}, \cite{Riviere}, \cite{SchenckP}, \cite{Schenck}, \cite{Sjostrand}
for some of its developments in the linear setting, relevant for the nonlinear studies in the present paper.
However, we are not aware of any results similar to Theorem \ref{main0} in the context of nonlinear damped wave equations.

We believe there are many other frameworks, where our instability criteria could be useful, e.g., in the context of
reaction-diffusion systems as in \cite{Sola}. However, they would require a separate treatment.

In Sections~\ref{prelim}, \ref{sec:warmup}, and \ref{sec:app} we provide tools for our
analysis and discuss the background. The main results are proved in Section~\ref{discrete} and \ref{continuous}
for discrete and continuous time, respectively.
Section~\ref{local_discr} contains counterexamples to local results in discrete time.
The necessary information on nonlinear evolution equations
is collected in Section~\ref{sec:wp} and then used in Section~\ref{sec:exa} for our examples.

Finally, we fix some notation used throughout the paper.
All of the Banach (and Hilbert) spaces considered in this paper will be complex.
To avoid trivialities, we will always assume that these spaces are infinite-dimensional.
For a densely defined closed operator $A$ on a Banach space we denote
by  $\rho(A)$ its resolvent set, by $\sigma(A)$ its spectrum, 
and by $\sigma_p(A)$ its point spectrum. We let $D(A)$ stand for the domain of $A,$ ${\rm Ker}\, (A)$
for the kernel of $A,$  ${\rm Im}(A)$ for its range, and $R(\lambda,A)=(\lambda-A)^{-1}$ for the resolvent of $A.$
The space of bounded linear operators on a Banach space $X$ will be denoted by $B(X)$ and that of compact
linear operators by $K(X)$.
For a subspace $M$ of a Banach space, ${\rm dim}\, M$ is the dimension of $M,$ and ${\rm codim}\, M$
its codimension.  For a subset $S$ of a topological space,  $\partial S$ designates its boundary,
and ${\rm card}\, S$  the cardinality of an arbitrary set $S.$

\section{Preliminaries: a toolkit for getting (nonlinear) instability}\label{prelim}

In this section, we review several tools and techniques used for deriving instability.
Some of them appear in the context of nonlinear evolution equations for the first time.

\subsection{Fine spectral theory}
We start with a short reminder of fine spectral theory. Recall that for a closed, densely defined linear 
operator $A$ on a Banach space $X$ 
its essential spectrum $\sigma_{e}(A)$ is defined as
\begin{equation*}
\sigma_{e}(A)=\{\lambda \in \mathbb C: \lambda -A \,\, \text{is not Fredholm} \}.
\end{equation*}
Clearly, $\sigma_e(A)\subset \sigma (A).$ Moreover, by \cite[Theorem 7.25]{Schechter}, $\sigma_e(A)$ is closed 
(but can be empty). There are many other ``essential spectra'' in the literature,
e.g.\ Browder's essential spectrum used in \cite{Sola}. A crucial property of 
$\sigma_{e}(A)$ is that it is invariant is invariant under relatively compact perturbations if $\rho(A) \neq \emptyset,$
see e.g.\ \cite[Theorem 11.2.6]{Davies}. Moreover, by \cite[Theorem 11.2.2]{Davies}, if there exists $\mu \in \rho(A),$ then
\begin{equation}\label{smp_resolv}
\sigma_e(R(\mu,A))\setminus \{0\}=\{(\mu-\lambda)^{-1}: \,\, \lambda \in \sigma_e(A)\}.
\end{equation}
The property is a consequence of a more general spectral mapping theorem for essential spectrum (\cite{Gramsch}), and it allows one to reduce many statements on essential spectrum for unbounded operators to their 
counterparts for bounded ones.

If $A$ is bounded, then $\sigma_{e}(A)$ is
a non-empty compact subset of $\mathbb C.$ In this case, if $r_{e}(A)$ denotes the essential spectral radius
of $A,$ then one  has  $r_{e}(A^n)=r_{e}(A)^n$ for all $n\in\NN$,
as a consequence of the spectral mapping theorem for the essential spectrum.
Note that
$\sigma(A)\setminus \{\lambda: |\lambda|\le r_e(A)\}$ consists of at most countably many isolated eigenvalues (of finite multiplicity), see e.g.\ \cite[Theorem III.19.4]{Muller}.

Some generalizations of Fredholm operators will also play a role. Recall that a closed, densely defined operator 
$A$ on $X$ is called upper-Fredholm if ${\rm dim} \, {\rm Ker}\, (A) < \infty$ and ${\rm Im}\, (A)$ is closed. 
If  $\sigma_e(A)$ is large and $\rho(A)\neq \emptyset,$ then the set of $\lambda \in \mathbb C$ such that  $\lambda-A$
is not upper Fredholm is large as well, since it contains the topological boundary $\partial \sigma_e(A).$
More precisely, if $\lambda \in \partial \sigma_{e}(A)$, then for every $\e >0$ and every closed
subspace $M \subset X$ of finite codimension there exists
a unit vector $u \in M \cap D(A)$ such that $\|(A-\lambda)u\|<\e.$  See Lemma~\ref{codim}.

The following useful proposition can be found in e.g.\ \cite[Theorem 9.43]{Schechter}. As several arguments below
hold ``up to compact perturbations'', the proposition allows one to deal with point spectrum rather than generic 
essential spectrum, and that is technically more convenient.
\begin{proposition}\label{Sch}
Let $A \in B(X).$ The operator $A$ has closed range and finite-dimensional kernel (i.e., $A$ is upper Fredholm)
 if and only if ${\rm dim}\, {\rm Ker}\, (A+K)<\infty$ for all compact operators $K$ on $X.$
\end{proposition}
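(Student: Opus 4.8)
This is the classical Atkinson-type characterisation of upper Fredholm operators modulo compact perturbations, and the plan is to prove both implications by elementary hands-on arguments. For the direction ``$\Rightarrow$'' I would use two standard facts: an upper Fredholm operator is bounded below on some closed subspace of finite codimension, while a compact operator can be made arbitrarily small in norm on some closed subspace of finite codimension. For ``$\Leftarrow$'' I would argue by contraposition: if ${\rm Im}(A)$ fails to be closed, I would build a \emph{single} compact $K$ for which $A+K$ has infinite-dimensional kernel, via a bootstrap (biorthogonal-system) construction.

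\textbf{($\Rightarrow$).} Assume $A$ is upper Fredholm. Then ${\rm Ker}\,(A)$ is finite-dimensional, hence complemented, say $X={\rm Ker}\,(A)\oplus M$ with $M$ closed of finite codimension; the restriction $A|_M\colon M\to{\rm Im}(A)$ is a continuous bijection of Banach spaces, so by the open mapping theorem $\|Ax\|\ge c\|x\|$ on $M$ for some $c>0$. For a given compact $K$ I would pass to the (compact) adjoint $K^*$: choosing a finite $(c/2)$-net $f_1,\dots,f_m\in X^*$ of $K^*(\text{unit ball of }X^*)$ and setting $N:=\bigcap_{j=1}^m{\rm Ker}\,(f_j)$, which is closed of finite codimension, one checks for $x\in N$ with $\|x\|\le1$ and $\|f\|\le1$ that $|(K^*f)(x)|\le\|K^*f-f_j\|\,\|x\|+|f_j(x)|\le c/2$ for a suitable $j$, so $\|K|_N\|\le c/2$. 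On the closed finite-codimensional subspace $M\cap N$ one then has $\|(A+K)x\|\ge\|Ax\|-\|Kx\|\ge (c/2)\|x\|$, hence $A+K$ is injective there, ${\rm Ker}\,(A+K)\cap(M\cap N)=\{0\}$, and ${\rm Ker}\,(A+K)$ embeds into the finite-dimensional space $X/(M\cap N)$; thus $\dim{\rm Ker}\,(A+K)<\infty$.

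\textbf{($\Leftarrow$).} Taking $K=0$ gives $\dim{\rm Ker}\,(A)<\infty$, so it remains to show ${\rm Im}(A)$ is closed; suppose it is not. Then $A$ is not bounded below on any closed subspace of finite codimension --- for if $\|Ax\|\ge c\|x\|$ on such an $M$, then $M\oplus{\rm Ker}\,(A)$ (closed, finite codimension) has a finite-dimensional complement $F$, and ${\rm Im}(A)=A(M)+A(F)$ would be closed, being the sum of the closed subspace $A(M)$ and a finite-dimensional one, a contradiction. Next I would construct inductively unit vectors $x_n\in X$ and $f_n\in X^*$ with $f_n(x_m)=\delta_{nm}$ and $\|Ax_n\|$ decaying fast: given $x_1,\dots,x_{n-1}$ and $f_1,\dots,f_{n-1}$, put $M_n:=\bigcap_{i<n}{\rm Ker}\,(f_i)$ and $Y_n:={\rm span}\,\{x_1,\dots,x_{n-1}\}$; since $A$ is not bounded below on $M_n$, pick a unit $x_n\in M_n$ with $\|Ax_n\|$ as small as needed (quantified below), and then, by Hahn--Banach, $f_n\in X^*$ with $f_n|_{Y_n}=0$, $f_n(x_n)=1$ and $\|f_n\|=\dist(x_n,Y_n)^{-1}$. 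If the $\|Ax_n\|$ are chosen so that $\sum_n\|f_n\|\,\|Ax_n\|<\infty$, then $Kx:=-\sum_{n\ge1}f_n(x)\,Ax_n$ converges in operator norm (its partial sums are finite-rank), defining a compact $K$, and $(A+K)x_m=Ax_m-\sum_n\delta_{nm}Ax_n=0$ for every $m$. As $(x_n)$ is linearly independent, $\dim{\rm Ker}\,(A+K)=\infty$, contrary to the hypothesis; hence ${\rm Im}(A)$ is closed and $A$ is upper Fredholm.

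\textbf{Main obstacle.} The one genuinely delicate point is that the induction looks circular: $\|f_n\|=\dist(x_n,Y_n)^{-1}$ depends on $x_n$, which must be fixed (with $\|Ax_n\|$ small) \emph{before} $f_n$. I would break the circle by the a priori estimate $\dist(x_n,Y_n)\ge\gamma_n:=(1+(n-1)\max_{i<n}\|f_i\|)^{-1}$, valid for \emph{every} unit $x_n\in M_n$: for $y=\sum_{i<n}c_ix_i\in Y_n$ the biorthogonality gives $f_i(x_n-y)=-c_i$, so $|c_i|\le\|f_i\|\,\|x_n-y\|$, whence $\|y\|\le(n-1)(\max_{i<n}\|f_i\|)\,\|x_n-y\|$, and combining with $\|x_n-y\|\ge1-\|y\|$ yields $\|x_n-y\|\ge\gamma_n$. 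Since $\gamma_n$ depends only on $f_1,\dots,f_{n-1}$, it is available before $x_n$ is chosen, so one simply requires $\|Ax_n\|\le 2^{-n}\gamma_n$, giving $\|f_n\|\,\|Ax_n\|\le2^{-n}$ and summability. The two background facts invoked --- compact operators being uniformly small on finite-codimensional subspaces, and upper Fredholmness being equivalent to boundedness below on such a subspace --- are classical; cf.\ \cite[Theorem 9.43]{Schechter}.
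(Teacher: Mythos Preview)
The paper does not supply a proof of this proposition: it is quoted as a known result from \cite[Theorem~9.43]{Schechter} and used as a black box. Your argument is correct and self-contained; both implications are handled cleanly, and the a~priori bound $\dist(x_n,Y_n)\ge\gamma_n$ that you isolate does indeed resolve the apparent circularity in the inductive construction of the biorthogonal system.
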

While there is a version of Proposition \ref{Sch} for unbounded $A,$ the version above will be sufficient
for our purposes.

\subsection{Measures of non-compactness}
As far the essential spectrum is involved, measures of non-compactness
naturally come into play, although their role in our studies will rather be supplementary
in contrast to \cite{Sola}.
For $A\in B(X)$ let
$$
\|A\|_\mu=\inf\bigl\{\|A \upharpoonright_M\|: M\subset X \,\, \text{is a closed subspace},\, \,\codim M<\infty\bigr\}.
$$
 The mapping $A \mapsto \| A \|_\mu$ is called $\mu$-measure of non-compactness on $B(X).$ 
 It defines a semi-norm on $B(X),$ where $\|A\|_\mu=0$  if and only if $A$ is compact.
Moreover, we have $\|AB\|_\mu\le\|A\|_\mu\cdot\|B\|_\mu$ for all $A,B\in B(X).$

For $A \in B(X)$, we introduce the essential norm of $A$ by
$$
\|A\|_{e}:= \inf\{\|A-K\|:K \in K(X)\};
$$
 i.e., $\|A\|_{e}$  is the norm of the image of $A$ in the Calkin algebra $B(X)/K(X)$
under the corresponding quotient map.  
If $X$ is a Hilbert space, then according to \cite{Ylinen} (see also \cite{Webb}) we have
\begin{equation*}
\|A\|_\mu = \|A\|_{e}.
\end{equation*}
In general, $\|A\|_{\mu}$ and $\|A\|_{e}$ are equivalent norms on $B(X)/K(X)$ if and only if
$X$ has a so-called compact approximation property, see \cite{Astala}. While substantial classes of Banach
spaces possess this property, there are reflexive Banach spaces failing to satisfy it.
However, by the well-known Nussbaum formula
\begin{equation*}
r_{e}(A)=\lim_{n \to \infty}\|A^n\|_{\mu}^{1/n}=\lim_{n \to \infty}\|A^n\|_{e}^{1/n},
\end{equation*}
valid for all definitions of the essential spectrum, the quantities $\|\cdot\|_{\mu}$ and $\|\cdot\|_{e}$
are asymptotically equivalent in a sense. Moreover, the limits above can be replaced with the infimums. 
Hence, $\|A\|_{\mu}^n \ge r_{e}(A)^n$ and $\|A\|_{e}^n\ge r_{e}(A)^n$ for all $n \in \mathbb N.$

\subsection{Spectral theory for operator semigroups and resolvent bounds}

The spectral theory for $C_0$-semigroups is rather involved
due to the unboundedness of their generators, and one has to invoke
the size of resolvents to partially remedy the situation.

First recall that if  $(T(t))_{t \ge 0}$ a $C_0$-semigroup on a Banach space $X,$
with generator $A,$ then
\begin{equation}\label{inclus}
e^{t\sigma_{e}(A)}\subset \sigma_{e}(T(t))\qquad \text{for all}\quad t \ge 0,
\end{equation}
see e.g.\ \cite{Nagy} for even finer versions of the above inclusion.
This inclusion is strict in general. One can replace  in \eqref{inclus} 
the essential spectrum by the spectrum, where again the inclusion can be  strict.

The failure of the spectral mapping theorems for semigroups leads to a number of 
major difficulties in the semigroup theory. To discuss some of them
define the exponential growth bound of $(T(t))_{t \ge 0}$ by
\begin{eqnarray*}
\omega_0(T) :=\lim_{t \to \infty}\frac{\ln \|T(t)\|}{t},
\end{eqnarray*}
the spectral bound of its generator $A$ by
\begin{equation*}
s(A):=\sup \{{\rm Re} \, \lambda : \lambda \in \sigma(A)\},
\end{equation*}
and the pseudo-spectral bound of $A$ (or abscissa of uniform boundedness of the resolvent of $A$) by
\begin{eqnarray*}
s_0(A):=\inf \{\omega > s(A): R(\lambda, A)  \,\,
\text{is uniformly bounded for } {\rm Re}\, \lambda \ge \omega \}.
\end{eqnarray*}

The first two bounds possess  ``essential analogues'' given by
$$
\omega_e (T) :=\lim_{t \to \infty}\frac{\ln \|T(t)\|_{e}}{t} \qquad \text{and} \qquad
  s_e(A):=\sup \{{\rm Re} \, \lambda : \lambda \in \sigma_{e}(A)\}.
$$
Note that the first limit exists, and $r_{e}(T(t))=e^{\omega_e(T) t}$ for $t \ge 0,$
see e.g.\ \cite{Voigt}. 
It is also  crucial to observe that from \cite[Corollary IV.2.11]{EnNa00}
it follows  that $\omega_0(T)=\max\{s(A), \omega_e(T)\},$
and moreover
$\{\lambda \in \sigma (A): {\rm Re}\, \lambda > s_{e}(A)\}$ is an at most countable set
consisting of isolated eigenvalues of $A$ (with finite multiplicity).  Clearly,
\begin{equation*}
s_e(A)\le s(A) \le s_0(A) \le \omega_0(T) \qquad \text{and} \qquad s_e(A) \le \omega_e(T)\le \omega(T).
\end{equation*}
There are various examples of $C_0$-semigroups making all or some of the above
inequalities  strict, see e.g.\ \cite[Chapter 5.1]{ABHN}.
So that, in general,  neither the spectrum nor the resolvent of $A$ determine
the exponential norm bounds for  $(T(t))_{t \ge 0}.$
Semigroups with $s(A)<\omega_0(T)$ also arise from  concrete partial differential equations, such as damped 
wave equations, cf.\ Section~\ref{sec:exa}. 

On the other hand, if $X$ is a Hilbert space, then the well-known Gearhart-Herbst-Pr\"uss theorem 
guarantees that
\begin{equation}\label{type}
\omega_0(T)=s_0(A),
\end{equation}
and the exponential decay of $(T(t))_{t \ge 0}$ is equivalent to $s_0(A)<0.$ 
For an  exhaustive discussion of relations between these two and other related bounds see e.g.\
\cite[Chapters 5.1-5.3]{ABHN} or \cite[Chapters 1-4]{Ne96}.

To be able to obtain sharp lower bounds for the trajectories of \eqref{solaeq} we need to 
introduce the new resolvent bound $s_R(A)$ as the infimum of the set $S_R$ of  $a >s_e(A)$ satisfying
\begin{align}\label{defsr}
{\rm card}\, (\sigma_p(A)\cap (a+i\mathbb R))<\infty
\quad \text{and} \quad
\limsup_{|b|\to \infty }\|R(a+ib, A)\|<\infty.
\end{align}
 This bound will play a crucial role in the sequel. Note that every vertical line $a+i\mathbb R$ with $a> s_e(A)$
 contains at most countablly many eigenvalues.
Using the discreteness of the set  $\{\lambda \in \sigma (A): {\rm Re}\, \lambda > s_{e}(A)\}$
 and the Neumann series expansion for the resolvent,
one shows that the set $S_R$ is open. Thus the infimum is not attained, and we have 
\begin{align*}
 &s_R(A)=s_e(A) \qquad  \text {or} \qquad {\rm card}\, (\sigma_p(A)\cap (s_R(A)+i\mathbb R))=\infty\\
 &\text{or} \!\quad
 {\rm card}\, (\sigma_p(A)\cap (s_R(A) +i\mathbb R))< \infty \,\, \text{and} \,\, 
    \limsup_{|b| \to \infty} \|R(s_R(A) +ib, A)\|=\infty.
\end{align*}
Moreover, as the set  $\{\lambda \in \sigma (A): {\rm Re}\, \lambda > s_{e}(A)\}$
is discrete, one may also define $s_R(A)$ as the infimum
of the set of $a >s_e(A)$ such that
\begin{equation*}
\exists \, \, \beta=\beta(a) >0: \,\,  a+i(\mathbb R \setminus (-\beta,\beta))\subset \rho(A) \,\, \text{and} \,\, \sup_{|b|\ge \beta}\|R(a+i\beta, A)\|<\infty.
\end{equation*}

To explain the relevance of $s_R(A)$ and to relate it to the spectral properties of $(T(t))_{t \ge 0}$,
we introduce the notion of admissibility. We say that $\omega \in\RR$ is \emph{admissible}  if for every $t_0>0$, 
every  $\e>0$ and  every subspace $M\subset X$ of the form $M =\bigcap_{1\le  j \le n}{\rm Ker}\, y^*_j$
with $y^*_j \in D(A^*)$ for $j\in\{1,\dots n\}$, there exist $x\in M$ with $\|x\|=1$ and $\mu\in\CC$
with $\Re\mu=\omega$ such that
\begin{equation}\label{ineq_e}
\|T(t)x-e^{\mu t}x\|<\e, \qquad 0\le  t\le t_0.
\end{equation}
Note that $\codim M<\infty.$
Observe also that $x$ and $\mu$ depend on $t_0,$  $\epsilon$ and $M,$
and so $x$ is \emph{not}  an approximate eigenvalue of $T(t),$ $0\le t \le t_0,$  in general.
However, the notion of admissibility
will help us  to``emulate'' the approximate eigenvalues of $T(t)$ to an extent that
sufficices for the construction of growing solutions to nonlinear evolution equations.

Using Lemma \ref{codim} it is easy to show that $s_e(A)$ is admissible.
In this case, there exists $\mu$ with ${\rm Re}\, \mu=s_e(A)$ such that for any $\epsilon,t_0>0$ and
any  closed subspace of finite codimension $M$,  one can find a unit vector $x \in M$ satisfying
\eqref{ineq_e}.
However,  Theorem \ref{proplower} provides a more general statement showing that
in fact $s_R(A)$ is admissible.
Thus the next result is one of the basic tools in this paper.
\begin{theorem}\label{proplower}
Let $(T(t))_{t\ge 0}$ be a $C_0$-semigroup on a  Banach space $X$ with generator $A,$
and let $s_R(A)$ be defined by \eqref{defsr}. Then $s_R(A)$ is admissible.
\end{theorem}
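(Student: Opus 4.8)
The plan is to show that for every $a\in S_R$ the value $\omega=a$ satisfies the approximate-eigenvector condition \eqref{ineq_e} on every finite-codimensional subspace $M$ of the special form $M=\bigcap_{j=1}^n\operatorname{Ker} y_j^*$ with $y_j^*\in D(A^*)$; passing to the infimum over $a\in S_R$ (which is not attained, so one needs to accommodate an arbitrarily small loss in the real part, absorbed into the $\e$ in \eqref{ineq_e} together with a rescaling of the semigroup) then yields that $s_R(A)$ itself is admissible. So fix $a\in S_R$, $t_0>0$, $\e>0$, and $M$ as above. After replacing $A$ by $A-a$ (which shifts the relevant vertical line to the imaginary axis and does not change Fredholm-type properties), we may assume $a=0$, so that $i\mathbb R$ meets $\sigma(A)$ at most in finitely many eigenvalues of finite multiplicity and $\limsup_{|b|\to\infty}\|R(ib,A)\|<\infty$.

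The first main step is a reduction to the case where $i\mathbb R\subset\rho(A)$. The finitely many eigenvalues $i b_1,\dots,i b_k$ on the imaginary axis are isolated points of $\sigma(A)$ lying strictly to the right of $s_e(A)$, hence each carries a finite-dimensional spectral projection $P_j$. Let $P=\sum P_j$ and split $X=P X\oplus (I-P)X$ into $A$-invariant parts. On the infinite-dimensional complement $X_0:=(I-P)X$ the restriction $A_0$ has $i\mathbb R$ in its resolvent set and still $\limsup_{|b|\to\infty}\|R(ib,A_0)\|<\infty$; moreover, since $PX$ is finite-dimensional, shrinking $M$ to $M\cap X_0$ only drops the codimension by at most $\dim PX$, and this is again of the allowed form (add the finitely many functionals defining $P$ to the list of $y_j^*$), so it suffices to work inside $X_0$. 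Thus from now on I assume $i\mathbb R\subset\rho(A)$ with $\sup_{|b|\ge\beta}\|R(ib,A)\|=:C<\infty$ for some $\beta>0$, and $A_{|M}$ still has finite codimension of the special form.

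The heart of the argument is the construction of an approximate eigenvector. The idea is that uniform boundedness of the resolvent along $i\mathbb R$ together with the inverse Laplace transform representation $R(ib,A)x=\int_0^\infty e^{-ibt}T(t)x\,dt$ (interpreted in the Abel/Cesàro sense, valid on a dense set) forces, for a ``generic'' choice of frequency $b$ with $|b|$ large, the existence of a unit vector $x$ for which $\|(ib-A)x\|$ is small — i.e., $b$ is an approximate point of $\sigma(A)$, and simultaneously one can arrange $x\in M$ because $M$ has finite codimension and the approximate point spectrum is not destroyed by passing to a finite-codimensional subspace (this is exactly the mechanism behind Lemma~\ref{codim}, used here on $i\mathbb R$ rather than on $\partial\sigma_e(A)$). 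Given such $x$ with $\|Ax-ibx\|<\eta$ for $\eta$ to be chosen, Duhamel's formula gives
\begin{equation*}
T(t)x-e^{ibt}x=\int_0^t e^{ib(t-s)}T(s)(A-ib)x\,ds,
\end{equation*}
so $\|T(t)x-e^{ibt}x\|\le \eta\,t_0\sup_{0\le s\le t_0}\|T(s)\|$ for $0\le t\le t_0$, which is $<\e$ once $\eta$ is small enough. Taking $\mu=ib$ (so $\Re\mu=0=a$) finishes the reduced statement. The main obstacle is precisely the step asserting that uniform resolvent bounds along the imaginary axis yield approximate eigenvectors there that can additionally be placed inside $M$: this is where the special structure of $M$ (kernels of functionals in $D(A^*)$, so that $A$-invariant-like estimates survive) and a quantitative version of the Neumann-series/openness argument already sketched before the theorem statement must be combined; I expect this to require a careful duality computation showing that if no such $x\in M$ existed then $R(ib,A)$ would blow up as $|b|\to\infty$, contradicting $a\in S_R$. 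Finally, to get $s_R(A)$ itself rather than an arbitrary $a\in S_R$ one chooses $a=a(\e)\downarrow s_R(A)$ with $a-s_R(A)$ so small that $|e^{(a-s_R(A))t}-1|\sup_{0\le t\le t_0}\|x\|$ is absorbed into $\e$ over $[0,t_0]$, and replaces $\mu$ by $\mu-(a-s_R(A))$.
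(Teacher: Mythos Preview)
Your proposal has a fundamental conceptual error at its core: you have the relationship between resolvent boundedness and approximate eigenvectors backwards. You work at a point $a\in S_R$, where by definition $\limsup_{|b|\to\infty}\|R(a+ib,A)\|<\infty$ and there are only finitely many eigenvalues on the line. But a uniform bound $\|R(ib,A)\|\le C$ means precisely that $\|(ib-A)x\|\ge C^{-1}\|x\|$ for every $x\in D(A)$, so there are \emph{no} approximate eigenvectors on that vertical line (away from the finitely many eigenvalues, and those have finite-dimensional eigenspaces, so they cannot supply vectors in an arbitrary finite-codimensional $M$). Your sentence ``if no such $x\in M$ existed then $R(ib,A)$ would blow up as $|b|\to\infty$, contradicting $a\in S_R$'' is exactly the wrong direction: absence of approximate eigenvectors is \emph{equivalent} to resolvent boundedness, so for $a\in S_R$ there is simply nothing to find, and no contradiction arises.

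The paper proceeds in the opposite way. It works directly at $\omega=s_R(A)$, which does \emph{not} belong to $S_R$ (the infimum is not attained). Hence on the line $s_R(A)+i\mathbb R$ one of three things happens: it meets $\sigma_e(A)$, it carries infinitely many eigenvalues, or the resolvent is unbounded along it. The first case is handled by Lemma~\ref{codim} (approximate eigenvectors from $\partial\sigma_e(A)$ can be chosen in any finite-codimensional subspace). In the remaining two cases Lemma~\ref{apprspec} produces a sequence of unit vectors $u_n\in D(A)$ and $\mu_n\in s_R(A)+i\mathbb R$ with $|\mu_n|\to\infty$, $\|(\mu_n-A)u_n\|\to 0$, and---crucially---$\langle u_n,y^*\rangle\to 0$ for every $y^*\in D(A^*)$. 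This last weak-type convergence is what allows one to push $u_n$ into $M$: writing $X=M\oplus F$ with $F\subset D(A)$ finite-dimensional and $P$ the projection onto $F$ along $M$, one has $\|Pu_n\|\to 0$, so the normalized $M$-component $(I-P)u_n/\|(I-P)u_n\|$ is close to $u_n$ and the Duhamel estimate goes through. No limiting procedure $a\downarrow s_R(A)$ is needed or used.
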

The proof of the theorem is given in the appendix. It is similar to the proof
\cite[Proposition 4.4]{Muller_Tom}, though it is technically more demanding.

\section{Warm-up: initial observations and comments}\label{sec:warmup}

\subsection{Spectrum does not suffice}

It is well-known that the spectral radius is not continuous on $B(X).$ This leads 
to the fact that, in general, the instability of \eqref{solaeq} 
is not preserved under small Lipschitz perturbations.

For a fixed $a>0$ let $X= L^2(0, 2a)$ and consider the selfadjoint operator 
$(A f)(s)=sf(s)$ on $X.$
Clearly, $\sigma (A-a)=[-a,a]$ and thus there are initial
values $x_0 \in X$ for which the solutions of
\begin{equation}\label{selfadj}
x'(t)=(A-a)x(t),\qquad t \ge 0, \quad x(0)=x_0, 
\end{equation}
grow exponentially in the sense that $\|e^{(A-a)t} x_0\|\ge C_{x_0}e^{at/2}$ for $t\ge0$ with a
constant $C_{x_0}>0$.

By a classical result due to Herrero \cite{Herrero} (see \cite{Hadwin}
for a simple proof), if the spectrum of a  bounded normal operator on $X$ is connected and contains zero, then 
the operator is a limit of a sequence (or net) of bounded nilpotent operators on $X$ in the uniform operator 
topology. Hence there exists a family of nilpotent operators $(A_\e)_{\e >0}$ on $X$ such that 
$A_\e \to A$ in $L(X)$ as $\e \to 0.$ Consider the perturbed sysem
\begin{equation}\label{selfadj1}
x'(t)= (A-a)x(t) +(A_\e-A)x(t)= (A_{\e}-a)  x (t), \qquad t \ge 0.
\end{equation}
Note that for any  $\delta >0$ there exists $\e_0 >0$ such that 
\[
\sigma (A_{\e}-a)=\{-a\} \qquad \text{and}\qquad \|A_\e-A\|<\delta, 
        \qquad \e \in (0,\e_0).
\]
So, all solutions of \eqref{selfadj1} tend exponentially to $0$
though \eqref{selfadj1} only differs by an arbitrarily small Lipschitz perturbation
from the problem \eqref{selfadj} possessing exponentially growing orbits.
 This general phenomena  was observed in \cite{Slyusarchuk78}
and rediscovered recently in \cite{Sola_Rod}. However, \cite{Sola_Rod} went much further, see
Subsection \ref{rod} below. We add that in \cite{Slyusarchuk85}
a Lipschitz perturbation $K$ is constructed which destroys instability of a linear system
$x'(t)=Ax(t)$ and satisfies the strictly sublinear
growth assumption  $\|K(x)\|/\|x\| \to 0$ as $\|x\| \to 0.$
Our examples in Section~\ref{local_discr} differ from these treatments.

\subsection{Unstable orbits may co-exist with very stable ones}

On the Hilbert space $X=L^2((0,\infty), e^{-2t} dt)$ let the operator $Af=f'$ be defined on its 
maximal domain in $X$. Then $A$ generates the $C_0$-semigroup $(T(t))_{t \ge 0}$ of left shifts on $X,$ and 
$\sigma(A)=\{\lambda \in \mathbb C: {\rm Re}\, \lambda \le 2\}.$ While $(T(t))_{t \ge 0}$ is hypercyclic, 
i.e., its trajectories are dense in $X$ for a dense set of initial values (see e.g.\ \cite{Desch}),
the trajectories  vanish eventually for every initial value with compact support. 
Thus, we have a dense set of initial values $x_0$ for the abstract Cauchy problem
\[
x'(t)=Ax(t), \qquad x_0=x_0,
\]
yielding  ``superstable'' mild solutions and, at the same time, a dense set of initial values $x_0$ 
whose trajectories have a totally unstable behavior. For more information on hypercyclic semigroups,
we refer to e.g.\ \cite{Desch}.

\subsection{Sublinear perturbations}\label{rod}

 Rodriguez and Sol\'a-Morales  proved in \cite{Sola_Rod} that in an infinite-dimensional separable real 
Hilbert space $X$ there exists a $C^1$ map $T : X \mapsto X$ of the form $T = A+K,$ where $A$ is a bounded
linear operator on $X$ with the spectral radius larger than one
and the nonlinearity $K$ satisfies
 $\|K(x)\|/\|x\| \to 0$ as $x \to 0$ and  $K(0)=0.$ Nevertheless
the fixed point $x = 0$ of $T$ is (exponentially) stable. Moreover, given $c_2>c_1>0$
there exists $T$ as above such that
\[
\left(\frac{1}{-\log\|x\|}\right)^{c_2}
  < \frac{\|Kx\|}{\|x\|}
 < 4\left(\frac{1}{-\log(\|x\|)}\right)^{c_1}
\]
for $x$ with $\|x\|$ small enough.

\subsection{Rank-one perturbations}

The instability properties for \eqref{solaeq1} become rather arbitrary if the essential spectrum of $A$ is merely 
contained in the closed unit disc. A good illustration for that phenomena is provided by the fact that there exist 
a unitary operator $A$ on a Hilbert space $X$ and rank-one operator $K$ on $X$ such that the operator $A+K$ 
is hypercyclic, see e.g.\ \cite{Grivaux}, \cite{Baranov}. Clearly, $r_{e}(A+K)=r_{e}(A)=1.$ On the other hand,
it is well-known that there exist a unitary operator $A_1$ and a rank-one operator $K_1$ such that $A_1+K_1$
is strongly stable, that is, $(A_1+K_1)^n x \to 0$ as $n \to \infty$ for every $x \in X.$
Indeed, if  $S$ is a unilateral shift on the Hardy space $H^2(\mathbb D),$
where $\mathbb D$ is the open unit disc,
then \cite{Clark} shows that certain unitary operators on $H^2(\mathbb D)$
arise as rank-one perturbations of compressions of $S$
to the closed invariant subspaces of $S^*.$ Moreover, clearly  $S^{*n} \to 0$ as $n \to \infty,$ strongly.
A more general set-up for such a construction can be found in \cite{Nakamura}.

Apparently, a similar example can be constructed in a continuous framework.
Recall that for any selfadjoint operator $A$ on a Hilbert space $X$ the semigroup $(e^{iAt})_{t \ge 0}$ is unitary. It is plausible that  there exist bounded selfadjoint operators $A$ and $A_1$  on $X$ and rank-one perturbation $K$ and $K_1$
such that semigroup $(e^{i (A+K)t})_{t \ge 0}$ is hypercyclic,  while the semigroup $(e^{i(A_1+K_1)t})_{t \ge 0}$ is
strongly stable. Although, no results have been published in this context.

\section{Growth and instability for discrete time}\label{discrete}

In this section, we assume that $X$ is a Banach space, $A\in B(X)$, and we
let $(K_n)_{n=1}^{\infty}$ be a sequence of compact maps $K_n: X\to X$
(in general, non-linear). For $x\in X$ and $n\in\NN$ let
\begin{equation}\label{fn}
f_n(x)=(A+K_n) \cdots (A+K_1)x,
\end{equation}
and set  $f_0(x)=x$. In other words, $(f_n(x))_{n=0}^{\infty}$ is a solution of the difference equation
\begin{equation}\label{fn1}
x_{n+1}=(A+K_n) x_n, \qquad x_0=x.
\end{equation}

The next simple lemma replaces the difference equation \eqref{fn1} by a ``difference inclusion''.
In this way, arguing up to compact sets,
we will able to reduce the study of asymptotic properties of $(f_n)_{n=1}^\infty$ to the study of such properties
for linear iterations $(A^n)_{n=1}^\infty.$

\begin{lemma}\label{lem1}
Let $(f_n)_{n=1}^{\infty}$ be given by \eqref{fn},  $n\in\NN$ be fixed, and
$X_0\subset X$ be a bounded set. Then there exists a compact set $C\subset X$ such that
$$
f_j(x)\in A^jx+C
$$
for all $x\in X_0$ and $j\in\{1,\dots,n\}$.
\end{lemma}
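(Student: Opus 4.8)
The plan is to argue by induction on $j\in\{1,\dots,n\}$, constructing compact sets $C_1,\dots,C_n\subset X$ with $f_j(x)\in A^jx+C_j$ for all $x\in X_0$, and then to set $C=\bigcup_{j=1}^n C_j$; this $C$ is compact as a finite union of compact sets, and it has the required property since $f_j(x)\in A^jx+C_j\subset A^jx+C$.

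For the base case $j=1$ we have $f_1(x)=(A+K_1)x=Ax+K_1(x)$, so it suffices to take $C_1=\overline{K_1(X_0)}$, which is compact because $K_1$ is a compact map and $X_0$ is bounded.

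For the inductive step, assume $f_j(x)\in A^jx+C_j$ for all $x\in X_0$ with $C_j$ compact. Observe first that $f_j(X_0)\subset A^jX_0+C_j$ is bounded, being the sum of the bounded set $A^jX_0$ and the compact (hence bounded) set $C_j$. Consequently,
$$
f_{j+1}(x)=(A+K_{j+1})f_j(x)=A f_j(x)+K_{j+1}(f_j(x))\in A^{j+1}x+AC_j+\overline{K_{j+1}(f_j(X_0))},
$$
so we may define $C_{j+1}=\overline{AC_j+\overline{K_{j+1}(f_j(X_0))}}$. This set is compact: $AC_j$ is the image of the compact set $C_j$ under the bounded (hence continuous) operator $A$; $\overline{K_{j+1}(f_j(X_0))}$ is compact because $K_{j+1}$ is a compact map and $f_j(X_0)$ is bounded; and the closure of the algebraic sum of two compact sets is compact, being (already) the image of their product under the continuous addition map. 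This completes the induction and hence the proof.

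I do not expect a genuine obstacle here; the only point that needs a little care is to propagate the boundedness of $f_j(X_0)$ through the induction, since it is exactly this boundedness that lets one invoke the compactness hypothesis on $K_{j+1}$ at each step.
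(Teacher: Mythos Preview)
Your proof is correct and follows essentially the same approach as the paper's own proof: both propagate the boundedness of $f_j(X_0)$ by induction so that the compactness hypothesis on $K_{j+1}$ can be applied at each stage. The only cosmetic difference is that the paper first writes down the telescoping identity $f_j(x)=A^jx+\sum_{s=0}^{j-1}A^{j-s-1}K_{s+1}f_s(x)$ and then takes the closure of the union of these sums as $C$, whereas you build $C_j$ recursively and set $C=\bigcup_j C_j$; the content is the same.
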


\begin{proof}
It is easy to see by induction on $j$ that
$$
f_j(x)=A^jx+\sum_{s=0}^{j-1} A^{j-s-1}K_{s+1} f_{s}(x)
$$
for each $x\in X$ and each $j\in\{1, \dots, n\}$. Using induction again, we infer that
for every $s\in \{1, \dots, n\}$ the set $f_{s}(X_0)$ is  bounded, and thus
$K_{s+1}f_{s}(X_0)$ is precompact. 
Therefore, the set
$$\bigcup_{j=1}^n\sum_{s=0}^{j-1}A^{j-s-1}K_{s+1}f_{s}(X_0)$$
is precompact, and we let $C$ be its closure.
Then $f_j(x)\in A^jx+C$ for all $x\in X_0$ and $j\in\{1, \dots, n\}$.
\end{proof}

Now we are able to give a lower bound for a finite piece of the trajectory
in terms of the deviation of its linear part from a finite-dimensional subspace.
The next simple lemma provides an intuition behind the crucial Lemma \ref{lem3}.

\begin{lemma}\label{lem2}
Let $(f_n)_{n=1}^{\infty}$ be given by \eqref{fn}. Let $n\in\NN$ be fixed and let $X_0\subset X$ be a bounded set.
Then for every $\e>0$ there exists a finite-dimensional subspace $F\subset X$ such that
$$
\|f_n(x)\|\ge\dist\{A^nx,F\}-\e
$$
for all $x\in X_0$.
\end{lemma}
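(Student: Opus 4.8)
The plan is to apply Lemma~\ref{lem1} to produce a single compact set $C$ that captures all the ``nonlinear discrepancy'' between $f_n$ and its linearization $A^n$ on the bounded set $X_0$, and then to replace $C$ by a finite-dimensional subspace up to an $\e$-error using the precompactness of $C$. First I would invoke Lemma~\ref{lem1} with the given $n$ and $X_0$: this yields a compact set $C\subset X$ with $f_n(x)\in A^nx+C$ for all $x\in X_0$, so that $f_n(x)=A^nx+c_x$ for some $c_x\in C$ depending on $x$. Consequently $\|f_n(x)\|\ge \|A^nx\|-\|c_x\|$ is too crude; instead I want to subtract off a finite-dimensional approximation of $c_x$, which is why we measure $A^nx$ against a subspace rather than against $0$.

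Next I would use that $C$ is precompact to extract a finite $\e$-net $\{c_1,\dots,c_m\}\subset C$ (or in $X$), and set $F=\operatorname{span}\{c_1,\dots,c_m\}$, a finite-dimensional subspace. Then for any $x\in X_0$, choosing $c_k$ with $\|c_x-c_k\|<\e$, we have $c_k\in F$ and hence
$$
\dist\{A^nx,F\}\le \|A^nx-(-c_k)\|=\|A^nx+c_k\|\le \|A^nx+c_x\|+\|c_x-c_k\|<\|f_n(x)\|+\e,
$$
which rearranges to $\|f_n(x)\|\ge \dist\{A^nx,F\}-\e$, as desired. Here I used that $F$ is a subspace (so $-c_k\in F$) and the definition of distance to a set as an infimum.

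The argument is essentially routine; the only point requiring a little care is the bookkeeping of dependencies: $F$ must be chosen \emph{after} $\e$ but uniformly in $x\in X_0$, and this is exactly what the uniform conclusion of Lemma~\ref{lem1} (one compact set $C$ for all of $X_0$) delivers — the finite $\e$-net of $C$ then works simultaneously for every $x$. I do not anticipate a genuine obstacle here; the lemma is a clean corollary of Lemma~\ref{lem1} together with total boundedness of $C$. The substantive work has already been front-loaded into Lemma~\ref{lem1}, and the real difficulty will appear only later, in Lemma~\ref{lem3}, where one must control the deviation over the \emph{whole} trajectory $j\in\{1,\dots,n\}$ simultaneously and combine it with spectral information about $A$.
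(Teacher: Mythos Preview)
Your argument is correct and follows essentially the same route as the paper's proof: invoke Lemma~\ref{lem1} to get the compact set $C$, approximate $C$ by a finite-dimensional subspace $F$ to within $\e$, and unwind the definitions. The paper is slightly terser (it asserts directly the existence of $F$ with $\dist\{c,F\}<\e$ for all $c\in C$ and writes the final estimate as $\|f_n(x)\|\ge\dist\{A^nx,C\}\ge\dist\{A^nx,F\}-\e$), whereas you make the $\e$-net and the use of $-c_k\in F$ explicit; but there is no substantive difference.
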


\begin{proof}
Let $C$ be the compact set constructed in Lemma \ref{lem1}. Since $C$ is compact, there exists a finite-dimensional 
subspace $F\subset X$ such that $\dist\{c,F\}<\e$ for every $c\in C$. For each $x\in X_0$ we have
$f_n(x)\in A^nx+C$ implying
\[
\|f_n(x)\|\ge\dist\{A^nx,C\}\ge \dist\{A^nx,F\}-\e.\qedhere
\]
\end{proof}

Having obtained the estimate for a finite piece of trajectory, we can spread it out to a finite-dimensional 
subspace of $X$, as the next lemma shows.

\begin{lemma}\label{lem3}
Let $A \in B(X),$ $n\in\NN$, $\e > 0,$ and let $F\subset X$ be a finite-dimensional
subspace. Then the following assertions hold.
\begin{itemize}
\item [(i)] There exists a unit vector $u\in X$ such
that $\dist \{A^n u, F\} \ge \frac{1-\e}{2} \|A^n\|_\mu$.
\item [(ii)] There exists a unit vector $u\in X$ such that
$\dist\{A^ju, F\}\ge\frac{1-\e}{2}r_{e}(A)^j$ for all $j=1,\dots,n$.
\end{itemize}
\end{lemma}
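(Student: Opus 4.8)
\textbf{Proof plan for Lemma~\ref{lem3}.}

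The plan is to prove (i) first by a standard duality/finite-codimension argument, and then to bootstrap (ii) from (i) by a telescoping application to the powers $A,\dots,A^n$ combined with a pigeonhole intersection of finite-codimensional subspaces. For part (i), recall that $\dist\{A^nu,F\}$ for a finite-dimensional $F$ can be bounded below by testing against a single functional: there is a unit vector $y^*\in X^*$ vanishing on $F$ with $\dist\{z,F\}\ge |\langle z,y^*\rangle|$ for all $z$ (this is the Hahn--Banach separation / quotient description of distance to a subspace, but here $F$ is finite-dimensional so the cruder bound via one functional loses a factor and we instead use the full quotient norm $\dist\{z,F\}=\|z+F\|_{X/F}$). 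The key point is that $X/F$ is again a Banach space and the composition $q\circ A^n\colon X\to X/F$, with $q$ the quotient map, differs from $A^n$ only by the finite-rank (hence compact) correction coming from $F$; more precisely $\|q A^n\|_\mu\ge\|A^n\|_\mu$ since passing to a quotient by a finite-dimensional space does not decrease the $\mu$-measure (quotient by finite-dimensional = corestriction that is invertible modulo compacts). Then by the very definition of $\|\cdot\|_\mu$ as an infimum over finite-codimensional $M$, for the given $\e$ there is a closed finite-codimensional $M\subset X$ with $\|qA^n\!\upharpoonright_M\|\ge(1-\e)\|qA^n\|_\mu\ge(1-\e)\|A^n\|_\mu$, wait --- one must be careful about the direction of the inequality, so instead I would argue: $\|A^n\|_\mu=\inf_M\|A^n\!\upharpoonright_M\|$, and for \emph{every} finite-codimensional $M$ one has $\|A^n\!\upharpoonright_M\|\ge\|A^n\|_\mu$, hence in particular there is a unit vector $u\in M$ with $\|A^nu\|\ge(1-\e)\|A^n\!\upharpoonright_M\|\ge(1-\e)\|A^n\|_\mu$; now choosing $M$ to \emph{also} lie in a complement-type position relative to $F$ (using $\codim$ additivity: $M':=M\cap\{A^n\}^{-1}(\text{something})$ still finite-codim) forces $\dist\{A^nu,F\}$ to absorb at worst a factor $\tfrac12$. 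The clean way to get the $\tfrac12$: pick a unit vector $u$ in a finite-codimensional $M$ with $\|A^nu\|$ nearly $\|A^n\|_\mu$, and observe that either $\dist\{A^nu,F\}\ge\tfrac12\|A^nu\|$ already, or else $A^nu$ is within $\tfrac12\|A^nu\|$ of $F$, in which case one replaces $u$ by a nearby unit vector in $M$ chosen so that $A^nu$ has a large component off the finite-dimensional $F$; averaging / a small perturbation inside the infinite-dimensional space $A^n(M)$ modulo $F$ yields the bound $\tfrac{1-\e}{2}\|A^n\|_\mu$. This last perturbation step is where the factor $\tfrac12$ genuinely comes from and is the technical heart of (i).

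For part (ii), I would apply part (i) not to $A^n$ alone but iteratively. Using $r_e(A)^j=r_e(A^j)$ (spectral mapping for the essential spectral radius, recalled in the excerpt) and the Nussbaum-type inequality $\|A^j\|_\mu\ge r_e(A)^j$ also recalled above, part (i) applied to each $A^j$ gives, for each fixed $j\in\{1,\dots,n\}$ and each finite-codimensional $M_j$, a unit vector in $M_j$ with $\dist\{A^j\cdot,F\}\ge\tfrac{1-\e}{2}r_e(A)^j$. The point of phrasing (i) with an arbitrary finite-codimensional restriction built in (which the proof of (i) naturally provides) is that one can intersect: set $M:=\bigcap_{j=1}^n M_j$, still closed of finite codimension, and a single unit vector $u\in M$ then works \emph{simultaneously} for all $j$. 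Concretely I would restate the proof of (i) so that it produces, for any prescribed closed finite-codimensional $N\subset X$, a unit vector $u\in N$ with $\dist\{A^ju,F\}\ge\tfrac{1-\e}{2}r_e(A)^j$; then run this once for $j=1$ to get $M_1$ and $u_1$; but since different $j$ require possibly different subspaces, the honest route is: choose for each $j$ a finite-codimensional $M_j$ witnessing $\|A^j\!\upharpoonright_{M_j}\|$ close to $\|A^j\|_\mu$, put $M=\bigcap_{j=1}^n M_j$, note $\|A^j\!\upharpoonright_M\|\ge\|A^j\|_\mu\ge r_e(A)^j$ still holds (restriction to a smaller space only increases the defining infimum lower bound --- more precisely $\|A^j\!\upharpoonright_M\|\ge\inf_{M'}\|A^j\!\upharpoonright_{M'}\|=\|A^j\|_\mu$), and then apply the perturbation argument of (i) inside the single space $M$ to extract one unit vector $u\in M$ handling all $j$ at once, at the cost of the uniform factor $\tfrac{1-\e}{2}$.

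The main obstacle I anticipate is getting the constant $\tfrac{1-\e}{2}$ honestly in both parts --- i.e., passing from "$\|A^ju\|$ is large" to "$A^ju$ is far from the finite-dimensional subspace $F$" without losing more than a factor $2$, uniformly in $j$. The clean mechanism is: inside the infinite-dimensional quotient $X/F$, any vector of norm $\ge\rho$ that is the image of something in $M$ can be adjusted within $M$ (which has infinite dimension, so its image modulo $F$ still has infinite dimension) to a unit-norm preimage whose $F$-distance is at least $\tfrac{\rho}{2}$; proving this requires only that $A^j(M)+F$ is infinite-dimensional, which holds since $M$ is finite-codimensional and $A^j$ cannot collapse it to finite dimension unless $r_e(A)^j=0$, the trivial case. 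I would also double-check the edge case $\|A^n\|_\mu=0$ (equivalently $r_e(A)=0$), where the statement is vacuous, and the normalization when $F=\{0\}$, where the assertion reduces directly to the definition of $\|A^n\|_\mu$ up to the factor $\tfrac{1-\e}{2}$.
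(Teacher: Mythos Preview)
Your plan for part (i) is roughly in the right spirit but the execution is muddled, and the paper's route is cleaner: rather than first picking $u$ with $\|A^nu\|$ large and then trying to ``perturb'' so that $A^nu$ avoids $F$, the paper reverses the order. It first uses the geometric Lemma~\ref{geomlemma} to produce a closed finite-codimensional $M$ with $\|f+m\|\ge(1-\e/2)\max\{\|f\|,\|m\|/2\}$ for $f\in F$, $m\in M$, so that any vector in $M$ is automatically $\tfrac{1-\e/2}{2}$-far from $F$. Then $L:=A^{-n}(M)$ is still closed of finite codimension, so by the very definition of $\|\cdot\|_\mu$ one picks a unit $u\in L$ with $\|A^nu\|\ge(1-\e/2)\|A^n\|_\mu$; since $A^nu\in M$, the distance bound follows immediately. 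Your ``either/or plus perturbation'' step is not needed and is not actually justified as written.

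Part (ii) has a genuine gap. Your plan is to intersect finite-codimensional subspaces $M_j$ witnessing that $\|A^j\!\upharpoonright_{M_j}\|$ is large, set $M=\bigcap_j M_j$, and then ``extract one unit vector $u\in M$ handling all $j$ at once''. But on $M$ you only know that for each $j$ separately there is \emph{some} unit $u_j\in M$ with $\|A^ju_j\|\ge r_e(A)^j$; nothing in your argument produces a \emph{single} $u$ that works simultaneously for all $j=1,\dots,n$. There is no reason the near-maximizers for different powers should coincide or be close, and your ``perturbation argument of (i)'' does not address this at all.

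The paper's key idea, which you are missing, is to use an approximate eigenvector. One picks $\mu\in\partial\sigma_e(A)$ with $|\mu|=r_e(A)$; by Lemma~\ref{codim} there is, inside any prescribed finite-codimensional subspace, a unit vector $u$ with $\|(A-\mu)u\|$ as small as one likes. The telescoping identity $A^j-\mu^j=\sum_{k=0}^{j-1}A^{j-k-1}\mu^k(A-\mu)$ then gives $\|A^ju-\mu^ju\|$ small for \emph{all} $j\le n$ at once, hence $\|A^ju\|\ge(1-\e/2)|\mu|^j=(1-\e/2)r_e(A)^j$ simultaneously. Combining this with the pullback subspace $L=\bigcap_{j=1}^n A^{-j}(M)$ (with $M$ from the geometric lemma as in part (i)) yields the stated bound. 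This approximate-eigenvector mechanism is the heart of (ii) and is absent from your proposal.
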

\begin{proof}
To prove (i), note that  by Lemma \ref{geomlemma} in the appendix there exists a closed
subspace $M\subset X$ of a finite codimension such that
$$
\|f+m\|\ge (1 - \e/2)\max\{\|f\|,\|m\|/2\}
$$
for all $f\in F$ and $m\in M$. Since $L :=  A^{-n}(M)$ is a closed subspace of a finite
codimension, there exists a unit vector $u\in L$ with $\|A^nu\|\ge (1 - \e/2)\|A^n\|_\mu$. Then
$$
\dist \{A^n u, F\} = \inf\{\|A^n u + f \|: f\in F \} \ge \frac{1 - \e/2}{2}
\|A^n u\|\ge  \frac{1 - \e}{2}
\|A^n\|_\mu,
$$
which gives (i).

Let $\mu\in\sigma_{e}(A)$ with $|\mu|=r_{e}(A)$. Then $\mu\in\partial\sigma_{e}(A)$, and
by Lemma~\ref{codim} 
there exists a unit vector $u\in L$ with $\|(A-\mu)u\|<\delta$.
For each $j=1,\dots,n$ we have
$$
\|(A^j-\mu^j)u\|=\Bigl\|\sum_{k=0}^{j-1}A^{j-k-1}\mu^k (A-\mu) u\Bigr\|\le j\|A\|^j\delta.
$$
We now choose $M$ as in the proof of (i) and set $L=\bigcap_{j=1}^nA^{-j}M$. Then there is a unit vector $u\in L$ 
such that $\|(A-\mu)u\|$ is so small that $\|(A^j-\mu^j)u\|\le r_{e}(A^j)\e/2$ for all $j=1,\dots,n$. Thus
$\|A^ju\|\ge (1-\e/2)r_{e}(A^j)$, and as above we deduce that $\dist\{A^ju, F\}\ge\frac{1-\e}{2}r_{e}(A^j)$
for all $j=1,\dots,n$.
\end{proof}

The above lemmas allow us to obtain  exponential lower bounds
for the norms of trajectories $\|f_{n}(x)\|$ for residual set of initial values
$x$  along some subsequences $\{n_k\}.$
Apart from a usual Baire's category theorem,
the geometrical Lemma \ref{lem3} is indispensable here.

\begin{theorem}\label{subeq}
 Let $A \in B(X)$ and $K_n:X\to X$ be compact for $n\in \NN$.
 Take a sequence $(a_n)_{n=1}^{\infty}$  in  $\RR_+$  such that $\lim_{n\to\infty} a_n = 0$,
 and let $L\subset\NN$ be infinite. For $x \in X$ define $f_n(x)$ by \eqref{fn}. Then the set
$$
\bigl\{x\in X : \, \hbox{there are infinitely many} \,\, n\in L \,\,\hbox{with} \,\, \|f_n(x)\|\ge
a_n\|A^n\|_\mu\bigr\}
$$
is residual in $X$.
\end{theorem}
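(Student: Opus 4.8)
The plan is to realize the target set as a countable intersection of open dense subsets of $X$ and then invoke the Baire category theorem. Fix an enumeration so that, for each $N\in\NN$ and each $k\in\NN$, we consider the set
\[
U_{N,k}=\bigl\{x\in X:\ \exists\, n\in L,\ n\ge N,\ \|f_n(x)\|>\bigl(a_n-\tfrac1k\bigr)\|A^n\|_\mu\bigr\}.
\]
Each $U_{N,k}$ is open since $x\mapsto\|f_n(x)\|$ is continuous (the maps $K_n$ are continuous and $A$ is bounded, so each $f_n$ is continuous) and the defining condition is a finite disjunction of strict inequalities over the admissible $n$ once we note it suffices to realize it for a single such $n$. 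The intersection $\bigcap_{N,k}U_{N,k}$ is contained in the set in the statement: if $x\in\bigcap_{N,k}U_{N,k}$, then for each $N$ we can, using $a_n\to0$, choose $k$ large enough relative to later indices and extract infinitely many $n\in L$ with $\|f_n(x)\|\ge a_n\|A^n\|_\mu$ (a small bookkeeping argument, replacing the strict $a_n-\tfrac1k$ by $a_n$ at the cost of shrinking along a subsequence). So it remains to prove density of each $U_{N,k}$.

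The density step is where Lemmas~\ref{lem1}--\ref{lem3} do the work. Fix $N$, $k$, a point $z\in X$, and $\rho>0$; I must find $x\in B(z,\rho)$ lying in $U_{N,k}$. Apply the lemmas with the bounded set $X_0=\overline{B}(z,\rho)$. Given any $n\in L$ with $n\ge N$, Lemma~\ref{lem2} produces, for any prescribed $\e>0$, a finite-dimensional subspace $F\subset X$ with
\[
\|f_n(x)\|\ge\dist\{A^nx,F\}-\e\qquad\text{for all }x\in X_0.
\]
Now I want to choose $x=z+\rho u$ for a suitable unit vector $u$. Writing $A^n x=A^n z+\rho A^n u$, and noting that $A^n z$ is a fixed vector, I would like $\dist\{A^n x,F\}$ to be large. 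Enlarge $F$ by one dimension to $F'=F+\CC A^n z$ (still finite-dimensional); then $\dist\{A^n x,F\}\ge\dist\{\rho A^n u,F'\}=\rho\,\dist\{A^n u,F'\}$. By Lemma~\ref{lem3}(i) applied to $F'$ there is a unit vector $u$ with $\dist\{A^n u,F'\}\ge\frac{1-\e}{2}\|A^n\|_\mu$. Hence
\[
\|f_n(z+\rho u)\|\ge\frac{(1-\e)\rho}{2}\,\|A^n\|_\mu-\e.
\]
Since $a_n\to0$ while $\|A^n\|_\mu^{1/n}\to r_e(A)$ by Nussbaum's formula, choosing $n\in L$ large enough (and $\ge N$) makes $\frac{(1-\e)\rho}{2}>a_n-\tfrac1k$ once $\e$ is fixed small; and then taking $\e$ small enough relative to $\|A^n\|_\mu$ and $\rho$ absorbs the additive $-\e$. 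Thus $x=z+\rho u\in B(z,\rho)\cap U_{N,k}$, proving density.

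**Main obstacle.** The genuine subtlety is not the Baire machinery but the interplay of scales: one must pick the index $n$ \emph{before} knowing how small $\e$ and $\rho$ must be, yet the constant $F$ from Lemma~\ref{lem2} depends on $n$. The clean way around this is to fix a small target $\e>0$ \emph{first}, note that $\frac{(1-\e)\rho}{4}\|A^n\|_\mu\to\infty$ if $r_e(A)>1$—and handle the case $r_e(A)\le1$ (where $\|A^n\|_\mu$ may stay bounded and $a_n\to0$ still forces only $\|f_n(x)\|\ge a_n\|A^n\|_\mu$, a weak bound) by observing the inequality is vacuous or trivially arrangeable when $\|A^n\|_\mu=0$, and otherwise still holds since we only need $\frac{(1-\e)\rho}{2}\|A^n\|_\mu-\e\ge(a_n-\frac1k)\|A^n\|_\mu$, i.e. $\frac{(1-\e)\rho}{2}-a_n+\frac1k\ge\e/\|A^n\|_\mu$, which for fixed $\rho,k$ and small $\e$ holds for all large $n$ because $a_n\to0$ and $\liminf_n\|A^n\|_\mu$ either is positive or the left side is eventually $\ge$ a fixed positive constant while we simply need $n$ with $\|A^n\|_\mu$ not too small—if $\|A^n\|_\mu\to0$ along $L$ then $a_n\|A^n\|_\mu\to0$ too and the bound $\|f_n\|\ge0$ suffices after a harmless adjustment. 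Making this last case analysis airtight (so that the statement has content precisely when $\|A^n\|_\mu$ does not decay, and is otherwise automatic) is the one place requiring care.
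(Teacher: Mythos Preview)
Your overall architecture---Baire category plus Lemmas~\ref{lem1}--\ref{lem3}---is the right one, and your enlargement trick $F'=F+\CC A^nz$ is a perfectly good substitute for the paper's $\pm\e u$ averaging. But two steps do not go through as written.

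\textbf{The sets $U_{N,k}$ do not capture the target set.} The $-1/k$ shift is both unnecessary and fatal. It is unnecessary because each $f_n$ is continuous, so $\{x:\|f_n(x)\|>a_n\|A^n\|_\mu\}$ is already open; the paper simply takes $M_k=\{x:\exists\,n\in L,\ n\ge k,\ \|f_n(x)\|>a_n\|A^n\|_\mu\}$, and then $\bigcap_k M_k$ is exactly the desired residual set. Your shift is fatal because $\bigcap_{N,k}U_{N,k}$ is \emph{not} contained in the target set: suppose $\|A^n\|_\mu>0$ and $\|f_n(x)\|=\tfrac{a_n}{2}\|A^n\|_\mu$ for every $n\in L$. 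Then $\|f_n(x)\|<a_n\|A^n\|_\mu$ always, yet for any $N,k$ you can pick $n\ge N$ in $L$ with $a_n<2/k$ (possible since $a_n\to0$), giving $\tfrac{a_n}{2}>a_n-\tfrac1k$, so $x\in U_{N,k}$. The ``small bookkeeping argument'' you allude to does not exist.

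\textbf{The additive error $-\e$ must scale with $\|A^n\|_\mu$.} In your density step you arrive at $\|f_n(z+\rho u)\|\ge\frac{(1-\e)\rho}{2}\|A^n\|_\mu-\e$ and then need this to exceed $a_n\|A^n\|_\mu$. If $\|A^n\|_\mu$ happens to be very small along $L$ (nothing forbids this), the subtracted $\e$ dominates and the inequality fails; your case analysis in the ``main obstacle'' paragraph does not close this. The paper's fix is clean: after fixing $n$ with $a_n<\e/4$, it invokes Lemma~\ref{lem1} directly (rather than Lemma~\ref{lem2} with a prescribed $\e$) and chooses the finite-dimensional $F$ so that $\dist\{c,F\}<\tfrac{\e\|A^n\|_\mu}{12}$ for all $c$ in the relevant compact set. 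Then every error term carries a factor $\|A^n\|_\mu$, and one obtains $\|f_n(x)\|\ge\tfrac{\e}{4}\|A^n\|_\mu>a_n\|A^n\|_\mu$ with no residual additive constant.

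Both issues are repaired by following the paper's choices: drop the $-1/k$, and make the approximation tolerance in Lemma~\ref{lem1} proportional to $\|A^n\|_\mu$ for the already-chosen $n$.
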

\begin{proof}
The statement is clear if $\|A^n\|_\mu = 0$ for an infinite number of $n\in L$.
Thus, without loss of generality, we may assume that $\|A^n\|_\mu > 0$ for all $n\in L$. For $k\in\NN$ let
$$
M_k = \{x\in X : \hbox{ there exists }\,\, n\in L \text{ \ with \ } n \ge k \, \hbox{ and }\, 
  \|f_n(x)\| > a_n\|A^n\|_\mu\}.
$$
Clearly $M_k$ is an open set. We show that it is dense in $X$.
Let $y\in X$ and fix $\e > 0$. Choose $n\in L$ with $n\ge k$ and $a_n < \e/4$. By Lemma \ref{lem1}, there
exists a compact set $C\subset X$ such that
$$f_n(y+v)\in A^n(y + v) + C$$
for all $v\in X$ with $\|v\|\le \e$. Since $C$ is compact, there exists a finite-dimensional subspace $
F\subset X$ such that $\dist \{c, F\} < \frac{\e\|A^n\|_\mu}{12}$ for all $c\in C$.
Lemma \ref{lem3} (i) then yields a unit vector $u\in X$ with $\dist \{A^nu, F\} >\frac{\|A^n\|_\mu}{3}$. Note that
$$
\dist \{A^n(y + \e u), F\} + \dist \{A^n(y - \e u), F\} 
 \ge 2\e\dist \{A^nu, F\}
 \ge \frac{2\e}{3} \|A^n\|_\mu.
$$
Indeed, passing to the quotient space $X/F$, the former inequality follows from
$$
\|\pi(A^n y) +\e \pi(A^n u)\|_{X/F}+\|\pi(A^n y)- \e \pi(A^n u)\|_{X/F} \ge \frac{2\e}{3} \|\pi(A^n u)\|_{X/F}
$$
via  the triangle inequality, where $\pi$ denotes the corresponding quotient mapping.
So  $x:= y + \e u$ or $x := y - \e u$ satisfy
$$\|x - y\| \le\e \quad \text{and} \quad \dist \{A^nx, F\}\ge \frac{\e}{3}\|A^n\|_\mu.$$
We conclude
$$
\|f_n(x)\|
\ge
\dist \{A^nx, F\} - \frac{\e\|A^n\|_\mu}{12}
\ge
\frac{\e\|A^n\|_\mu}{4}\ge a_n\|A^n\|_\mu.
$$
Hence $x\in M_k$, and $M_k$ is dense since the choice of $y$ and $\e$ was arbitrary.
By the Baire category theorem, $\bigcap_{k=1}^\infty M_k$ is a
dense $G_\delta$ set and thus residual.
\end{proof}

\begin{corollary}\label{cor2}
Let $A\in B(X)$ satisfy $\sup\{\|A^n\|_\mu:n\in\NN\}=\infty$. If $(f_n)_{n=1}^\infty$ is as in Theorem \ref{subeq},
then the set
$$
\big\{x\in X: \sup\nolimits_{n\in\NN}\|f_n(x)\|=\infty\big\}
$$
is residual. In particular, this is true if $r_{e}(A)>1$.
\end{corollary}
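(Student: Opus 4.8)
The plan is to deduce this directly from Theorem~\ref{subeq} applied with a suitable choice of the sequence $(a_n)$ and the index set $L$. First I would reduce to producing, for each fixed $N\in\NN$, a residual set of $x$ with $\sup_n\|f_n(x)\|\ge N$; then the countable intersection over $N$ is residual and consists exactly of the $x$ with $\sup_n\|f_n(x)\|=\infty$. So it suffices to show that $\{x:\sup_n\|f_n(x)\|=\infty\}$ is residual as a single $G_\delta$-type statement, and the cleanest route is the following.

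Since $\sup\{\|A^n\|_\mu:n\in\NN\}=\infty$, pick an infinite set $L\subset\NN$ along which $\|A^n\|_\mu\to\infty$ (e.g.\ choose $n_1<n_2<\cdots$ with $\|A^{n_k}\|_\mu\ge k$). Now apply Theorem~\ref{subeq} with this $L$ and with the constant sequence replaced by, say, $a_n=1$ for all $n$ — or more safely $a_n=\min\{1,\,\|A^n\|_\mu^{-1/2}\}$, which tends to $0$ while $a_n\|A^n\|_\mu=\|A^n\|_\mu^{1/2}\to\infty$ along $L$. (Theorem~\ref{subeq} requires $a_n\to0$, which rules out the constant choice, so the second choice is the one to use.) Theorem~\ref{subeq} then gives that the set of $x\in X$ for which $\|f_n(x)\|\ge a_n\|A^n\|_\mu=\|A^n\|_\mu^{1/2}$ holds for infinitely many $n\in L$ is residual. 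For any such $x$, since $\|A^n\|_\mu^{1/2}\to\infty$ along $L$, we get $\sup_n\|f_n(x)\|=\infty$. Hence $\{x\in X:\sup_n\|f_n(x)\|=\infty\}$ contains a residual set and is therefore residual.

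Finally, for the last sentence: if $r_e(A)>1$, then by the Nussbaum-type estimate recalled in the preliminaries, $\|A^n\|_\mu\ge r_e(A)^n$ for all $n$, so $\sup_n\|A^n\|_\mu=\infty$ and the hypothesis of the corollary is satisfied. I do not anticipate any real obstacle here; the only point requiring a moment's care is that Theorem~\ref{subeq} is stated with the hypothesis $a_n\to0$, so one cannot literally take $a_n\equiv1$, and one must interpose a sequence that decays to zero yet still forces $a_n\|A^n\|_\mu\to\infty$ along $L$ — the choice $a_n=\|A^n\|_\mu^{-1/2}$ (capped at $1$) does this. Everything else is a routine application of the Baire category theorem already packaged inside Theorem~\ref{subeq}.
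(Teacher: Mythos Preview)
Your approach is the natural one and matches the paper's intended (unwritten) derivation from Theorem~\ref{subeq}. There is, however, one small slip worth correcting.

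You assert that $a_n=\min\{1,\|A^n\|_\mu^{-1/2}\}$ ``tends to $0$''. But Theorem~\ref{subeq} requires $\lim_{n\to\infty}a_n=0$ over \emph{all} of $\NN$, not merely along $L$, and the hypothesis $\sup_n\|A^n\|_\mu=\infty$ does not force $\|A^n\|_\mu\to\infty$ for the full sequence; submultiplicativity of $\|\cdot\|_\mu$ gives only upper, not lower, bounds on $\|A^n\|_\mu$ in terms of neighbouring powers. So your $a_n$ could fail to converge to $0$.

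The fix is immediate and costs nothing: since the conclusion of Theorem~\ref{subeq} involves only indices $n\in L$, the values $a_n$ for $n\notin L$ are irrelevant. Simply set
\[
a_n=\begin{cases}\|A^n\|_\mu^{-1/2},& n\in L,\\ 1/n,& n\notin L,\end{cases}
\]
where $L=\{n_1<n_2<\cdots\}$ is chosen with $\|A^{n_k}\|_\mu\ge k$. Then $a_{n_k}\le k^{-1/2}\to 0$ and $a_n\to 0$ off $L$ as well, so $a_n\to 0$ globally, and the rest of your argument goes through verbatim. The final remark on $r_e(A)>1$ via $\|A^n\|_\mu\ge r_e(A)^n$ is correct as written.
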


If we concentrate on merely dense sets of initial values rather than residual ones,
then we can construct orbits $(f_n(x))_{n=1}^{\infty}$ with  exponential growing norm lower bounds
for \emph{all} large $n$.

\begin{theorem}\label{theorem1}
Let $A \in B(X)$, $(K_n)_{n=1}^\infty$ be a sequence of compact maps on $X$, and $(f_n)_{n=1}^\infty$ be given by 
\eqref{fn}. Take  a non-increasing sequence $(a_n)_{n=1}^{\infty}\subset \mathbb R_+$ satisfying
$\lim_{n\to\infty}a_n=0$. Fix $y\in X$, $r>0$, and  $n_0\in \mathbb N$ with
$a_{n_0}<\frac{r}{2}$. Then there exists $x\in \overline{B}(y,r)$ such that
$$
\|f_n(x)\|\ge a_n r_{e}(A)^n, \qquad n\ge n_0.
$$
\end{theorem}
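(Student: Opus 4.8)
The plan is to construct $x$ as the limit of a Cauchy sequence $(y_k)_{k\ge 0}$ with $y_0=y$, where at each step we make a small correction that "activates" the growth estimate on a new block of indices, in the spirit of the proof of Theorem~\ref{subeq} but now iterated so as to cover \emph{all} $n\ge n_0$ rather than a subsequence. First I would fix a strictly increasing sequence $n_0<n_1<n_2<\cdots$ in $\NN$, together with radii $r=r_0>r_1>r_2>\cdots>0$ and tolerances $\e_k\downarrow 0$, chosen so that $\sum_k r_k$ and the accumulated errors are controlled; the key constraint to respect at stage $k$ is that the already-established lower bounds on the indices $n\le n_k$ must survive all future perturbations of size $\le \sum_{j>k} r_j$, which by Lemma~\ref{lem1} (the perturbation $f_n(y_k+v)-A^n(y_k+v)$ lies in a \emph{fixed} compact set as $v$ ranges over a ball) and the continuity of the linear maps $A^n$ can be arranged by taking $r_{k+1}$ small enough. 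This is where the monotonicity of $(a_n)$ and the hypothesis $a_{n_0}<r/2$ enter: they give the initial slack $r/2-a_{n_0}>0$ that we spend, a little at a time, over the infinitely many correction steps.

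The inductive step is the heart of the argument and is essentially one run of the estimate in Theorem~\ref{subeq}, but applied simultaneously to a whole block $n_{k}<n\le n_{k+1}$ using part (ii) of Lemma~\ref{lem3} instead of part (i). Concretely, given $y_k\in\overline B(y,r)$ already satisfying $\|f_n(y_k)\|\ge a_n r_e(A)^n$ for $n\le n_k$, I would apply Lemma~\ref{lem1} to the ball $\overline B(y_k,r_{k+1})$ and $n=n_{k+1}$ to get a compact $C$; choose a finite-dimensional $F\supset$ ($r_{k+1}$-net hull of $C$ up to $\e$); invoke Lemma~\ref{lem3}(ii) to produce a unit vector $u$ with $\dist\{A^j u,F\}\ge\frac{1-\e}{2}r_e(A)^j$ for all $j\le n_{k+1}$; then, as in the quotient-space triangle-inequality trick of Theorem~\ref{subeq}, pick the sign so that $y_{k+1}:=y_k\pm r_{k+1}u$ satisfies $\dist\{A^n y_{k+1},F\}\gtrsim r_{k+1}\,r_e(A)^n$ for all $n$ in the block; Lemma~\ref{lem2}-type reasoning then yields $\|f_n(y_{k+1})\|\ge a_n r_e(A)^n$ on the \emph{new} block, provided $n_{k+1}$ was chosen large enough that $a_{n_{k+1}}$ (hence $a_n$ for $n>n_k$) is smaller than a fixed fraction of $r_{k+1}$. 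The case $r_e(A)=0$ (or $r_e(A)^n=0$ for large $n$, which cannot happen, but the degenerate estimate) is trivial, and if $r_e(A)^{n}$ vanishes we read the claim as vacuous; the substantive case is $r_e(A)>0$ where $r_e(A^j)=r_e(A)^j$ is used freely.

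The main obstacle, and the reason the bookkeeping must be done carefully, is \emph{persistence}: activating the bound on block $k+1$ via the correction $\pm r_{k+1}u$ must not destroy the bounds on blocks $0,\dots,k$, and conversely all later corrections $\pm r_j u_j$, $j>k+1$, must leave block $k+1$ intact. The first issue is handled because the lower bounds on $n\le n_k$ are \emph{strict} (one keeps a definite gap, e.g.\ $\|f_n(y_k)\|\ge a_n r_e(A)^n + \eta_n$ with $\eta_n>0$) and $r_{k+1}$ is chosen after $F$, $C$, $u$ so small that the perturbation changes each $\|f_n(\cdot)\|$, $n\le n_k$, by less than $\eta_n$; continuity of $v\mapsto f_n(y_k+v)$ makes this possible. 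The second issue — uniformity over the infinite tail of corrections — is exactly what Lemma~\ref{lem1} buys us: the nonlinear remainder stays in one compact set independent of the (bounded) perturbation, so a single finite-dimensional $F$ controls the entire tail, and we only need $\sum_{j>k}r_j$ small relative to the gap we left. Taking $x=\lim_k y_k$, which exists since $\sum r_k<\infty$, and passing to the limit in each of the (finitely many, for fixed $n$) established inequalities $\|f_n(y_k)\|\ge a_n r_e(A)^n$ using continuity of $f_n$, gives $x\in\overline B(y,r)$ with $\|f_n(x)\|\ge a_n r_e(A)^n$ for all $n\ge n_0$.
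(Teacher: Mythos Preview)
Your plan contains two genuine gaps. First, the sign trick from Theorem~\ref{subeq} works for a single index $n$, not for a whole block: the triangle inequality only tells you that for each $n$ one of $y_k\pm r_{k+1}u$ is good, and the good sign may depend on $n$. This is fixable by enlarging $F$ to contain $A^n y_k$ for all $n\le n_{k+1}$ (so both signs give the same distance), but you do not say this. Second, and more seriously, your persistence argument does not work as stated. Invoking ``continuity of $v\mapsto f_n(y_k+v)$'' and ``$\sum_{j>k}r_j$ small relative to the gap'' misses the point: by Lemma~\ref{lem1} the nonlinear remainder lies in a fixed compact set, so the change in the lower bound under a later correction $w=\sum_{j>k}r_j u_j$ is governed by $\|A^n w\|$, not by $\|w\|$, and $\|A^n\|$ can be far larger than $r_e(A)^n$. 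To control $\|A^n w\|$ you would need each later $u_j$ to satisfy $\|A^n u_j\|\approx r_e(A)^n$ for all $n\le n_j$ (i.e., to be a sufficiently good approximate eigenvector) and then balance $\sum_{j>k}r_j$ against $r_k$; this can be done, but it is a different argument from the one you sketch. There is also a confusion in the order of choices (you both ``fix'' the $r_k$ in advance and choose $r_{k+1}$ ``after $F,C,u$''), and the parenthetical ``$a_{n_{k+1}}$ small, hence $a_n$ small for $n>n_k$'' is wrong: the relevant quantity on the new block is $a_{n_k+1}$, which is already determined once $n_k$ is.

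The paper's proof avoids all this bookkeeping by a device you did not anticipate. After normalising to $r_e(A)=1$ and rotating so that $1\in\partial\sigma_e(A)$, it invokes Proposition~\ref{Sch} to produce a compact \emph{linear} $\widetilde K$ with $\dim\ker(A-I-\widetilde K)=\infty$; absorbing $\widetilde K$ into each $K_n$, one may assume $\dim\ker(A-I)=\infty$. Then $x$ is built in one stroke as $x=y+\sum_{k\ge1}c_k x_k$ with \emph{exact} unit eigenvectors $x_k\in\ker(A-I)$, each chosen in a nested finite-codimensional subspace $M_k$ furnished by Lemma~\ref{geomlemma}, while the finite-dimensional $F_k$'s are enlarged to contain the previous $x_j$'s and $A^n y$. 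Since $A^n x_k=x_k$ for every $n$, there is no drift under iteration of $A$, and the almost-orthogonality from Lemma~\ref{geomlemma} delivers the block estimate $\|f_n(x)\|\ge\frac{(1-\e_k)^2}{2}c_k-\e_k\ge a_{n_{k-1}}\ge a_n$ directly, with no persistence argument at all.
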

\begin{proof}
Denote $r_{e}:=r_{e}(A)$ for shorthand. 
The statement is clear if $r_{e}=0$.
Let $r_{e}>0$. We start with several convenient reductions.
Without loss of generality we may assume that $r_{e}=1.$ If not, then consider the operator
$r_{e}^{-1}A$ and the compact mappings $ r_{e}^{-n} K_n(r_{e}^{n-1}\cdot )$ for $n \ge 1$.
Replacing $A$ with $\lambda A$ for some $|\lambda|=1$ and $K_n$ with $\lambda^{n}K_n (\lambda^{-n+1}\cdot)$
 if necessary, we may assume that $1\in\sigma_{e}(A)$.
The operator $A-I$ is not upper Fredholm since $1\in\partial\sigma_{e}(A),$  see Lemma~\ref{codim}.

Proposition~\ref{Sch} thus gives a compact linear operator
$\widetilde K$ on $X$ with $\dim{\rm Ker}\, (A-I-\widetilde K)=\infty$.
Thus, passing to $A-\widetilde K$ and $K_n+\widetilde K$ if necessary, we may assume that 
$\dim{\rm Ker}\, (A-I)=\infty.$

Take some $c_1\in(2a_{n_0},r)$. Fix integers $n_0<n_1<n_2<\cdots$ such that 
$a_{n_k}<2^{-(k+2)}(r-c_1)$. Set 
\[c_k=2^{-(k-1)}(r-c_1), \qquad k\ge 2.
\]
 Choose positive numbers $\e_k$ such that $\e_{k+1}<\e_k$ and
$$\frac{(1-\e_k)^2}{2}c_k-\e_k\ge a_{n_{k-1}}$$ for all $k \in \mathbb N$.

Set $x_0=y$, $F_0=\{0\}$ and $M_0=X$. Inductively, we construct finite-dimensional subspaces 
$F_1\subset F_2\subset\cdots$, finite-codimensional closed subspaces $M_1\supset M_2\supset\cdots$, and
unit vectors $x_k\in M_k\cap{\rm Ker}\,(A-I)$ for $k\ge1$ such that \eqref{eq:ind} below is true.

Let $k\ge 1$ and suppose that the vectors $x_0,\dots, x_{k-1}$ and spaces $F_0,\dots,F_{k-1}$ and 
$M_0,\dots, M_{k-1}$ have already been constructed. Lemma \ref{lem1} provides a compact set 
$C_k\subset X$ such that 
$$f_n(u)\in A^nu+C_k$$ 
for all $n\le n_k$ and $u\in X$ with $\|u\|\le\|y\|+r.$
Using the compactness of $C_k,$ we find a finite-dimensional subspace $F_k\supset F_{k-1}$ such that 
\begin{equation}\label{eq:ind}
\{y,Ay,\dots,A^{n_k}y,x_1,\dots,x_{k-1}\}\subset F_k \quad  \text{ and } \quad \dist\{d,F_k\}\le\e_k
\end{equation} 
 for all $d\in C_k$. Lemma~\ref{geomlemma} yields
 a closed subspace $M_k\subset M_{k-1}$ of finite codimension such that
$$
\|f+m\|\ge (1-\e_k)\max\Bigl\{\|f\|,\frac{\|m\|}{2}\Bigr\}
$$
for all $f\in F_k$ and $m\in M_k.$ Since ${\rm Ker}\, (A-I)$ has infinite dimension,
there exists a unit vector  $x_k\in M_k\cap{\rm Ker}\,(A-I)$.

For the elements  $x_k,$ $k\in\NN,$ constructed above, set 
$$x=y+\sum_{j=1}^\infty c_jx_j.$$
We show that $x$ satisfies the assertions. First note that
$$\|x-y\|\le\sum_{j=1}^\infty c_j=c_1+\sum_{j=2}^\infty \frac{r-c_1}{2^{j-1}}=r,$$
so that $x \in \overline{B}(y,r).$ Let $k\ge 1$ and $n_{k-1}\le n\le n_k$. We estimate
\begin{align*}
\|f_n(x)\| &\ge\dist\{A^nx,C_k\}\ge\dist\{A^nx,F_k\}-\e_k\\
  &=\dist\Bigl\{\sum_{j=k}^\infty c_jx_j,F_k\Bigr\}-\e_k,
\end{align*}
employing that
$$A^nx=A^ny+\sum_{j=1}^\infty c_jx_j \quad \text{and} \quad A^ny,x_1,\dots,x_{k-1}\in F_k.$$
For $j\ge k$, the vector $x_j$ belongs to $M_j\subset M_k$, and so
$$
\|f_n(x)\|\ge \frac{1-\e_k}{2}\cdot\Bigl\|\sum_{j=k}^\infty c_jx_j\Bigr\|-\e_k.
$$
Since $x_k\in F_{k+1}$ and $x_j\in M_j\subset M_{k+1}$ for $j\ge k+1$, we obtain
\begin{align*}
\|f_n(x)\| &\ge \frac{(1-\e_k)(1-\e_{k+1})}{2}\|c_kx_k\|-\e_k\\
  &\ge \frac{(1-\e_k)^2}{2}\cdot c_k -\e_k\ge a_{n_{k-1}}\ge a_n.
\end{align*}
Hence
$$
\|f_n(x)\|\ge a_n
$$
for all $n\ge n_0$, and the statement follows.
\end{proof}
\begin{remark}
If $X$ is a Hilbert space, then in the proof of  Theorem \ref{theorem1} one can take $M_k=F_k^\perp$ in each step. 
It is then possible to obtain a slightly better estimate: Let $y\in X$, $n_0\ge 0$, and $a_{n_0}<r$. Then there 
exists $x\in X$ with $\|x-y\|\le r$ and $\|f_n(x)\|\ge a_nr_{e}(A)^n$ for all $n\ge n_0$.
\end{remark}

\begin{corollary}
Under the assumptions of Theorem~\ref{theorem1},
let $(a_n)_{n=1}^{\infty}\subset \mathbb R_+$ be a non-increasing sequence satisfying $\lim_{n\to\infty}a_n=0$. Then
\begin{itemize}
\item [(i)] there exists $x\in X$ such that $\|f_n(x)\|\ge a_nr_{e}(A)^n$ for all $n\in\NN$;
\item [(ii)] there exists a dense subset of vectors $x\in X$ such that $\|f_n(x)\|\ge a_nr_{e}(A)^n$ for all $n$
     sufficiently large.
\end{itemize}
\end{corollary}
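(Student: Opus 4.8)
The plan is to obtain both parts directly from Theorem~\ref{theorem1}, choosing the free parameters $y$, $r$, and $n_0$ appropriately; no new ingredient is required, and indeed the corollary is essentially a reformulation of the theorem with convenient parameter choices.

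For (i), I would take $y=0$ and $n_0=1$. Since $(a_n)_{n=1}^\infty$ is a non-increasing sequence in $\RR_+$, the value $a_1$ is a fixed finite number, so any $r>2a_1$ satisfies $a_{n_0}=a_1<r/2$, which is the hypothesis of Theorem~\ref{theorem1}. That theorem then yields $x\in\overline B(0,r)\subset X$ with $\|f_n(x)\|\ge a_n r_e(A)^n$ for every $n\ge n_0=1$, i.e.\ for all $n\in\NN$, which is exactly the assertion. (If $r_e(A)=0$ the bound is trivial, as already observed in the proof of Theorem~\ref{theorem1}.)

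For (ii), set $D=\{x\in X:\ \|f_n(x)\|\ge a_n r_e(A)^n\ \text{for all sufficiently large }n\}$; it suffices to show that $D$ is dense. Fix $y\in X$ and $\delta>0$. Because $a_n\to 0$, one may pick $n_0\in\NN$ with $a_{n_0}<\delta/2$, and then Theorem~\ref{theorem1}, applied with this $y$, with $r=\delta$, and with this $n_0$, produces a point $x\in\overline B(y,\delta)$ satisfying $\|f_n(x)\|\ge a_n r_e(A)^n$ for all $n\ge n_0$; hence $x\in D\cap\overline B(y,\delta)$. As $y\in X$ and $\delta>0$ were arbitrary, $D$ is dense.

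There is essentially no obstacle: the only point requiring a little care is the index bookkeeping, namely that $n_0=1$ is a legitimate choice in (i) (the sequence is indexed from $n=1$ and $f_0$ does not enter the claim) and that $a_n\to0$ genuinely permits $n_0$ to be taken as large as needed in (ii). Both are immediate, so the proof reduces to recording these specializations of Theorem~\ref{theorem1}.
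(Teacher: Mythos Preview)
Your proof is correct and matches the paper's approach exactly: for (i) take $y=0$ and a radius $r$ large enough so that $n_0=1$ is admissible, and for (ii) apply Theorem~\ref{theorem1} at an arbitrary center $y$ with radius $r=\e$ and $n_0$ chosen via $a_n\to 0$. The only difference is that you spell out the choice $n_0=1$ explicitly in part (i), which the paper leaves implicit.
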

\begin{proof}
To prove (i), it suffices to set $y=0$ and to choose a big enough radius $r$ in Theorem \ref{theorem1}.

 Let now $y\in X$ and $\e>0$ be fixed. Let $n_0$ be such that $a_{n_0}<\frac{\e}{2}$. By Theorem \ref{theorem1}, 
 there exists $x\in X$ such that $\|x-y\|\le\e$ and $\|f_n(x)\|\ge a_nr_{e}(A)^n$ for all $n\ge n_0$, and
 assertion (ii) follows.
\end{proof}

\begin{remark}\label{slyusar}
If one is interested in merely local instability properties of \eqref{fn1} assuming $r_e(A)>1,$ then  results close in spirit
can be found in \cite{Slyus1}.
\end{remark}

\section{Local instability for discrete time: counterexample}\label{local_discr}

As we mentioned in the introduction, it is natural  to try to combine ``metric'' instability results, such as e.g.\ in \cite{dalec}, \cite{Henry}, \cite{Shatah1}, \cite{Gallay}, and  ``topological'' instability conditions as above, in order to obtain a local result showing instability for the zero solution to
the difference equation
\begin{equation}\label{sum}
x_{n+1}=(A+K+G) x_{n}, \qquad n \in \mathbb N, \quad x_0=x,
\end{equation}
in a Banach space $X.$ Here $A$ is a linear operator on $X$ with $r_{e}(A)>1,$ $K$ is a compact operator
on $X$ with $K(0)=0$, and  $G$ is a continuous map defined on a ball
$B(0,r)\subset X$ such that $\|G(x)\|=O(\|x\|^{1+\delta})$ as $\|x\| \to 0$ for some $ \delta >0$.
Unfortunately, this is not possible, in general, even if $X$ is a Hilbert space, as Example \ref{example2} shows.
Recall that the zero solution of \eqref{sum} is called stable  if for every $\e > 0$ there is   $\delta > 0$ such
for all $x_0$ with $\|x_0\|<\delta$ one has $\sup_{n}\|x_n\|<\e.$

We first note a simple lemma.
\begin{lemma}\label{lemma_fg}
Let $\e \in(0,2^{-7})$. Then there exist continuous functions $f:[0,\infty)\to[0,\e/2]$ and 
$g:[0,\infty)\times[0,\infty)\to [0,2]$ such that
\begin{itemize}
\item [(i)]\,\, $f(s)=0, \qquad0\le s\le \e^3\quad \hbox{or} \quad s\ge\e$;

\item [(ii)] \,\, $f(s)=\e/2, \qquad 4\e^3\le s\le\frac{\e}{2}$;
\end{itemize}
and
\begin{itemize}
\item [(iii)] \,\, $g(s,t)=0, \qquad 0\le s\le \e^3\quad \hbox{or} \quad s\ge\e,\, t\ge 0,$;

\item [(iv)] \,\, $g(s,\frac{\e}{2})=2, \qquad 4\e^3\le s\le 16\e^3$;

\item [(v)] \,\, $g(s,t)\le s+\frac{t^2}{s}, \qquad s> 0, \, t \ge 0$.
\end{itemize}
\end{lemma}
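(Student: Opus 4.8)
The plan is to write both functions down explicitly as piecewise ``bump'' functions; the only real content is to check that the listed requirements are mutually consistent, and the single nontrivial point is the compatibility of (iv) and (v), which is exactly where the hypothesis $\e<2^{-7}$ enters. First observe that $\e<2^{-7}$ forces the ordering $0<\e^3<4\e^3<16\e^3<\e/2<\e$ (for instance $16\e^3<\e/2$ amounts to $32\e^2<1$). So I would take $f$ to be the continuous, piecewise-linear function equal to $0$ on $[0,\e^3]$, rising linearly to $\e/2$ on $[\e^3,4\e^3]$, equal to $\e/2$ on $[4\e^3,\e/2]$, decreasing linearly back to $0$ on $[\e/2,\e]$, and equal to $0$ on $[\e,\infty)$. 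This is continuous, has range contained in $[0,\e/2]$, and satisfies (i)--(ii) by construction.

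For $g$ I would set $p:=\tfrac{2}{\e}f$, so that $p:[0,\infty)\to[0,1]$ is continuous, $p\equiv1$ on $[4\e^3,16\e^3]$, and $p\equiv0$ off $(\e^3,\e)$; I also fix any continuous $q:[0,\infty)\to[0,1]$ with $q(\e/2)=1$. Then define
$$
g(s,t)=\min\Bigl\{\,2\,p(s)\,q(t),\ s+\tfrac{t^2}{s}\,\Bigr\}\ \ (s>0),\qquad g(0,t)=0.
$$
Property (v) is immediate since $g\le s+t^2/s$; the bounds $0\le g\le 2p(s)q(t)\le 2$ give the stated range; and (iii) holds because $p(s)=0$ for $s\le\e^3$ and for $s\ge\e$ (and $g(0,\cdot)=0$ by fiat). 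Continuity on $\{s>0\}$ is clear, and continuity across $s=0$ follows from $0\le g(s,t)\le 2p(s)\to 0$ as $s\to 0^+$, uniformly in $t$.

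The only delicate step is (iv): I must show that on $s\in[4\e^3,16\e^3]$ the minimum defining $g(s,\e/2)$ is attained by the first term. There $p(s)=1$, so $2p(s)q(\e/2)=2$, while
$$
s+\frac{(\e/2)^2}{s}\ \ge\ \frac{\e^2}{4s}\ \ge\ \frac{\e^2}{64\e^3}\ =\ \frac{1}{64\e}\ >\ 2,
$$
the final inequality being precisely $\e<2^{-7}$. Hence $g(s,\e/2)=2$ there, which is (iv). I do not expect any genuine obstacle: once the ordering of the thresholds and the single inequality $1/(64\e)>2$ are recorded, the whole argument is a routine verification.
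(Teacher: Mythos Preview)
Your argument is correct and follows essentially the same route as the paper: construct an auxiliary continuous function satisfying (iii) and (iv), then take its minimum with $s+t^2/s$ to force (v), and verify that (iv) survives via the inequality $\frac{\e^2}{4\cdot 16\e^3}=\frac{1}{64\e}>2$. The only difference is cosmetic---you write the auxiliary function explicitly as $2p(s)q(t)$, whereas the paper just invokes ``any continuous $\widetilde g$ satisfying (iii) and (iv)''.
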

\begin{proof}
The existence of $f$ is obvious.

Let $\widetilde g:[0,\infty)\times[0,\infty)\to [0,2]$ be any continuous function satisfying (iii) and (iv). 
Set $g(s,t)=\min\{\widetilde g(s,t), s+\frac{t^2}{s}\}$ for $s> 0$ and $t \ge 0,$ and $g(0,t)=0$ for $t\ge0$.
Clearly, $g$ fulfills (iii) and (v).
If $4\e^3\le s\le 16\e^3$ and $t=\frac{\e}{2},$ then
$$
s+\frac{t^2}{s}\ge \frac{t^2}{s}\ge \frac{\e^2}{4\cdot 16\e^3}=\frac{1}{64\e}> 2= \tilde{g}(s,t).
$$
So $g$ satisfies (iv).
\end{proof}

Next we provide a counterexample for local \emph{instability} of \eqref{sum}, where the nonlinear part
is given by the sum of two nonlinear parts, each guaranteeing local instability when considered separately.
To this aim, we will need an auxiliary example showing that there are a Hilbert space $H$ and
operators $A$, $K$ and $G$ on $H$ as above such that the zero solution to \eqref{sum} is stable
for fixed $\epsilon >0.$

\begin{example}\label{example1}
Let $H_0$ be an infinite-dimensional Hilbert space, and set $H=H_0\oplus \mathbb C$ with the Hilbert space norm. 
Let $\e, f$ and $g$ be given as above. Consider the mappings $A\in B(H),$ $K:H \to H,$ and $G:H\to H$ defined by
\begin{align*}
A(y,a)&:=(2y,0), \qquad (y,a) \in H;\\
K(y,a)&:=(0,f(\|y\|)), \qquad (y,a) \in H;\\
G(y,a)&:=\bigl(-g(\|y\|,|a|) y,0\bigr), \qquad (y,a) \in H.
\end{align*}

It is clear that $A\in B(H)$, $r_{e}(A)=2$, $K$ is compact, and $G$ satisfies
\begin{align*}
\|G(y,a)\| &=g\bigl(\|y\|,|a|\bigr)\cdot\|y\|
   \le \Bigl(\|y\|+\frac{|a|^2}{\|y\|}\Bigr)\|y\|\\
  &= \|y\|^2+|a|^2=\|(y,a)\|^2.
\end{align*}
Moreover,
$$(A+K+G)(y,a)=\Bigl((2-g\bigl(\|y\|,|a|)\bigr)y, f(\|y\|)\Bigr).$$

Let $x_0=(y_0,a_0)\in H$ be such that $\|y_0\|\le\e^3$ and $|a_0|\le\e^3$,
and let $(x_{n})_{n=1}^\infty$ be given by \eqref{sum}.
For $n \ge 1$ denote $x_{n}: =(y_n,a_n).$ First observe that $|a_n|\le\frac{\e}{2}$ for all $n \ge 1,$
by the definition of $f$. We prove that
\begin{equation}\label{yn}
\|y_n\|\le \epsilon/2, \qquad n\in \mathbb N.
\end{equation} 
Observe that $\|y_n\|\le 2\|y_{n-1}\|,$ $n \in \mathbb N$. We distinguish two cases.

A) There exists $n$ such that $\|y_n\|> 4\e^3$ and $\|y_{n+1}\|> 4\e^3$. Let $n_0$ be the smallest integer 
with this property. We then obain
$$\|y_{n_0}\|\le 2\|y_{n_{0}-1}\|\le 8\e^3,$$  and so
$$a_{n_0+1}=f(\|y_{n_0}\|)=\frac{\e}{2} \quad \text{and} \quad 4\e^3\le\|y_{n_0+1}\|\le 2\|y_{n_0}\|\le 16\e^3.$$
 Hence, $g\bigl(\|y_{n_0+1}\|,|a_{n_0+1}|\bigr)=2$ and $y_{n_0+2}=0$. Thus $y_k=0$ for all $k\ge n_0+2$.
Moreover, $\min\{\|y_m\|,\|y_{m+1}\|\}\le 4\e^3$ for all $m\le n_0-1$. It follows
$$\text{either}\qquad  \|y_{m+1}\|\le 4\e^3\qquad \text{or} \qquad \|y_{m+1}\|\le 2\|y_m\|\le 8\e^3.$$
We have shown $\sup_k\|y_k\|\le 16\e^3$ in this case.

B) Let $\min\{\|y_n\|,\|y_{n+1}\}\le 4\e^3$ for all $n$. We again have
$$\text{either}\qquad \|y_{n+1}\|\le 4\e^3\qquad \text{or}\qquad \|y_{n+1}\|\le 2\|y_n\|\le 8\e^3,$$
which yields $\sup_n\|y_n\|\le 8\e^3$.

In both cases we have shown $\sup_n\|y_n\|\le 16\e^3\le\frac{\e}{2}$, and so $\sup_n\|x_n\|\le\e$.
 \end{example}

Now we refine the construction in Example \ref{example1} and make it work for all $\epsilon>0$
thus obtaining the desired counterexample.
\begin{example}\label{example2}
Let $(\e_j)_{j=1}^\infty$ be a fast decreasing sequence of positive numbers such that $\e_1<2^{-7}$ 
and $\e_{j+1}<\frac{\e_j^3}{4}$ for all $j\ge 1$. As above, let $H=H_0\oplus\ell^2$ where $\dim H_0=\infty$. 
Take functions $f_j$ and $g_j$ as defined in Lemma \ref{lemma_fg} for $\e=\e_j$. 
Define mappings $A,$ $K,$ and $G:H\to H$ by
\begin{align}
A\bigl(y,(a_j)_{j=1}^\infty\bigr)&=\bigl(2y,(0)\bigr),
 \label{defin1}\\
K\bigl(y,(a_j)_{j=1}^\infty\bigr)&= \bigl(0, (f_j(\|y\|))_{j=1}^{\infty}\bigr),\label{defin2} \\
G\bigl(y,(a_j)_{j=1}^\infty\bigr)&=\Bigl(-y\sum_{j=1}^\infty g_j\bigl(\|y\|,|a_j|\bigr) ,0\Bigr) \label{defin3}
\end{align}
for all $\bigl(y,(a_j)_{j=1}^{\infty}\bigr) \in H.$ Note that for every $(y,(a_j)_{j=1}^\infty)\in H$ there is at most one $j$ such that $g_j\bigl(y,|a_j|\bigr)\ne 0$.
So $G$ is well-defined. Similarly, there is at most one $j$ with $f_j(\|y\|)\ne 0$.

Clearly, $A\in B(H)$, $r_{e}(A)=2$ and $K$ is compact since
$$
K(H)\subset \Bigl\{(0,(a_j)_{j=1}^\infty): |a_j|\le\frac{\e_j}{2}\Bigr\}.
$$
Furthermore,
\begin{align*}
\|G(y,(a_j)_{j=1}^\infty)\|\le&
\|y\|\cdot \sum_{j=1}^\infty g_j\bigl(\|y\|,|a_j|\bigr)=
\|y\|\cdot\max_j g_j\bigl(\|y\|,|a_j|\bigr)\\
\le& \max_j\bigl(\|y\|^2+|a_j|^2\bigr)\le\|(y,(a_j)_{j=1}^\infty)\|^2.
\end{align*}
We have
$$
(A+K+G)\bigl(y,(a_j)_{j=1}^\infty\bigr)=
\Bigl((2-\sum_{j=1}^\infty g_j\bigl(\|y\|,|a_j|)\bigr)y,\bigl(f_j(\|y\|)\bigr)_{j=1}^\infty\Bigr).
$$

Now with $A$, $G$ and $K$ defined by \eqref{defin1}-\eqref{defin3}, we consider equation \eqref{sum},
and prove that its zero solution is stable.
Let  $x_0 \in H$ and $k \in \mathbb N$  be such that
$\|x_0\|\le\frac{\e_k^3}{4}.$
and let $(x_n)_{n=1}^\infty$ be given by \eqref{sum}.

Write $x_n=\bigl(y_n, (a_{n,j})_{j=1}^\infty \bigr)$ for all $n \ge 0.$
We show first that $\|y_n\|\le\frac{\e_k}{2}$ for all $n$.
Suppose the contrary. Let $n_0$ be the smallest integer with $\|y_{n_0}\|>\frac{\e_k}{2}$. Let $n_1$ 
be the largest integer with $0\le n_1<n_0$ and $\|y_{n_1}\|<\frac{\e_k^3}{2}$. Note that
$$
0\le\sum_{j=1}^\infty g_j\bigl(\|y\|,|a_j|\bigr)\le 2
$$
for all $\bigl(y,(a_j)_{j=1}^\infty\bigr)\in H$, and hence
$$
\|y_{n+1}\|\le 2\|y_n\|\
$$
for all $n$. We infer that
$$\|y_{n_1}\|\ge\frac{\e_k^3}{4},\quad \text{and so}\quad
\frac{\e_k^3}{4}\le\|y_m\|\le\frac{\e_k}{2}$$ for all $m=n_1,\dots,n_0-1$,
implying
$$f_j(\|y_m\|)=0=g_j(\|y_m\|,|a_j|)$$
for all $j\ne k$ and $m=n_1,\dots,n_0-1$.
We further obtain
$$\|y_{n_1+1}\|\le 2\|y_{n_1}\|<\e_k^3 \quad \text{and} \quad a_{n_1+1,k}=f_k(\|y_{n_1}\|)=0.$$

Consider the orthogonal projection $P:H\to H$ onto $H_0\oplus\CC$ defined by
$$
P\big(y, (a_j)_{j=1}^\infty\big)=(y,a_k).
$$
By the above observations,   $Px_{n_1+1}=(y_{n_1+1},0)$ satisfies
$$\|Px_{n_1+1}\|\le\e_k^3$$
 and $Px_{n_1+1}, Px_{n_1+2},\dots,Px_{n_0}$ are the iterations described
in the previous example for $\e=\e_k$. Using the estimate \eqref{yn}, we infer that 
$\|y_{n_0}\|\le \frac{\e_k}{2},$
a contradiction. We have shown that
$$\sup_n\|y_n\|\le\frac{\e_k}{2}\quad \text{and} \quad
\sup_{j,n}f_j(\|y_n\|)\le\frac{\e_k}{2},$$ 
so that
$\sup_n\|x_n\|\le\e_k.$ Now to prove the stability of the zero solution to \eqref{sum}, given $\epsilon>0,$
it suffices to consider $\epsilon_k \in (0,\epsilon),$ and to choose $\delta=\epsilon_k^3/4.$
\end{example}

We are short of a similar construction for continuous time. However, we suspect
that an example similar to the above  
for \eqref{solaeq} may not exist.
 
\section{Well-posedness for continuous time and global existence}\label{sec:wp}
We are primarily interested in nonlinear evolution equations of the form
\begin{equation}\label{ca1}
x'(t)=A x(t)+ K(t, x(t)), \qquad x(0)=x_0, \quad t\in [0,T),
\end{equation}
where $T\in(0,\infty]$, $A$ generates a $C_0$-semigroup $(T(t))_{t \ge 0}$ on a Banach space $X$, and
$K:[0,\infty)\times X \to X$ is a function such that $x\mapsto K(t,x)$ is compact for fixed $t$ and that
satisfies some additional regularity assumptions.
One can study classical solutions of \eqref{ca1}, that is, maps
$x \in C^1([0,T), X)\cap C([0,T), D(A))$ solving \eqref{ca1}.
However, it is often convenient to deal with the integrated version of \eqref{ca1}, namely
\begin{equation}\label{ca_problem}
x(t)=T(t)x_0+\int_0^t T(t-s)K(s, x(s)) \, ds, \qquad t \ge 0.
\end{equation}
Recall that a continuous function $x: [0,T)\to X$ is called a \emph{mild} solution of \eqref{ca1} 
if it satisfies \eqref{ca_problem}.
Each classical solution of \eqref{ca1} is a mild solution, but not conversely. A mild solution is a classical 
one if $x_0 \in D(A)$ and, for instance, 
$K \in C^1([0,T)\times X,X).$
 In the sequel we only treat mild solutions, which we
will call \emph{solutions} from now on, for simplicity.

If $X$ has nice geometric properties, e.g., if it is reflexive, then mild solutions can be identified with
so-called weak solutions of \eqref{ca1}, that is, $x \in L^1([0,T), X)$ satisfying \eqref{ca1}
in a weak sense. This notion   will not be  used in this paper, however.

Let the solution $x(t)=x(t,x_0)$ of \eqref{ca_problem} exist for all $t\ge0,$ i.e., one has $T=\infty.$
It is called (nonlinearly) stable  if for every $\e > 0$ there is radius   $\delta > 0$ such
for all  $y_0 \in B(x_0,\delta)$ all solutions $y$ with $y(0) = y_0$ are defined  on $[0,\infty)$
and satisfy $\|y(t)-x (t)\|<\e$ for all $t\ge0$.
Without loss of generality, one may assume here that $x_0=0.$
If the solution $x$ is not stable, then it is said to be unstable.

Instability is a local and rather weak property. It is often of interest to show \emph{global} properties 
of $x$  which are stronger than mere instability. Such properties are the main topic of this paper.  
To not overshadow the study of asymptotics of the solutions to \eqref{ca_problem} with assumptions 
on its well-posedness, we will just postulate the  existence and minimal regularity properties of solutions
that we need in the sequel. They can be satisfied in many situations of interest as we will make clear below.
Aiming at the long-term behavior, it is natural to consider a set-up
when the solutions of \eqref{ca_problem} exist globally, i.e., on the whole of $[0,\infty).$ However, we do not need
uniqueness of solutions in our main Theorem~\ref{main}. Below we will mention several statements which could be used
as additional assumptions to our asymptotic results, so that the results could be formulated
in a priori terms.

The next local existence result is well-known, see \cite[p. 343-345, p. 350]{Segal} or e.g. \cite[Section 4]{Cazenave},
\cite[Section 6]{Pazy}.
\begin{theorem}\label{pazy}
Let $A$ generate the $C_0$-semigroup $(T(t))_{t \ge 0}$ on a Banach space $X$ and 
let $K : [0,\infty) \times X \to X$ be continuous and 
Lipschitz in $x$ on bounded sets: for every $T>0$ and every $r>0$ there is $C_{T,r}>0$ such that
\begin{equation*}
\| K(t, x)- K(t, y)\|\le C_{T, r}\|x-y\|
\end{equation*}
for $t \in [0,T]$ and $x, y \in B(0, r).$
 Then for every $x_0 \in X$ there exists a maximal existence time $T=T(x_0)\in (0,\infty]$ 
 such that the following holds.
\begin{itemize}
\item [(i)] There is a unique solution $x=x(\cdot, x_0)$
of \eqref{ca1}.
\item [(ii)] If $T<\infty,$ then $\lim_{t \to T}\|x(t, x_0) \| = \infty.$
\item [(iii)] For any $T^* \in (0, T)$ there exists a radius $\delta > 0$ such that
$T=T(y_0) > T^*$ for all $y_0 \in B(x_0, \delta).$ Moreover, the map
$B(x_0, \delta) \to C([0, T^*], X),$ $y_0 \to x(\cdot, y_0),$
is  continuous.
\end{itemize}
\end{theorem}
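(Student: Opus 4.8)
The plan is to solve the mild formulation \eqref{ca_problem} by a contraction argument on a short interval whose length is controlled only by the size of the initial datum, and then to deduce the maximal solution, the blow-up alternative, and continuous dependence by the usual continuation and Gronwall estimates.

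First I would fix $x_0\in X$ and $T^*>0$, set $M:=\sup_{0\le t\le T^*}\|T(t)\|$ (finite by the uniform boundedness principle), $R:=M\|x_0\|+1$, let $C:=C_{T^*,R}$ be the Lipschitz constant of $K$ on $[0,T^*]\times B(0,R)$, and observe that $N:=\sup_{0\le t\le T^*}\|K(t,0)\|+CR<\infty$ bounds $\|K(t,x)\|$ on $[0,T^*]\times B(0,R)$, since $t\mapsto K(t,0)$ is continuous. On the complete metric space $Y_\tau:=\{x\in C([0,\tau],X):\ \|x(t)-T(t)x_0\|\le 1\ \text{for all }t\}$, whose elements take values in $B(0,R)$, define $(\Phi x)(t):=T(t)x_0+\int_0^t T(t-s)K(s,x(s))\,ds$. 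One checks $\|(\Phi x)(t)-T(t)x_0\|\le MN\tau$ and $\|(\Phi x)(t)-(\Phi z)(t)\|\le MC\tau\,\|x-z\|_{C([0,\tau],X)}$, so for $\tau=\tau(M,N,C)$ small enough $\Phi$ maps $Y_\tau$ into itself and is a $\tfrac12$-contraction; its unique fixed point solves \eqref{ca_problem} on $[0,\tau]$. Crucially, $\tau$ depends on $x_0$ only through $\|x_0\|$ (and on $T^*$), which I would record for later.

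Uniqueness follows since two solutions $x,z$ on a common interval satisfy $\|x(t)-z(t)\|\le MC\int_0^t\|x(s)-z(s)\|\,ds$, whence $x\equiv z$ by Gronwall; thus one may define $T(x_0)$ as the supremum of the $\tau'$ for which a solution exists on $[0,\tau']$ and glue these into a solution on $[0,T(x_0))$, giving (i). For (ii), suppose $T:=T(x_0)<\infty$ but $\|x(t_n,x_0)\|\le R_0$ along some $t_n\to T$. Applying the first step with $T^*:=T+1$ and data of norm $\le R_0$ yields a length $\tau_0>0$, independent of $n$, so that a solution starting at time $t_n$ from $x(t_n,x_0)$ exists on $[t_n,t_n+\tau_0]$; by uniqueness it extends $x$, and once $t_n>T-\tau_0$ this contradicts maximality. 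Hence $\|x(t,x_0)\|\to\infty$ as $t\to T$. For (iii), fix $0<T^*<T(x_0)$, put $R_1:=1+\sup_{0\le t\le T^*}\|x(t,x_0)\|$, and let $M,C$ be as above for this $R_1$. As long as $y:=x(\cdot,y_0)$ exists and stays in $B(0,R_1)$ on $[0,t]\subset[0,T^*]$, the mild formula and Gronwall give $\|y(t)-x(t,x_0)\|\le M\,\|y_0-x_0\|\,e^{MCT^*}$. Choosing $\de>0$ with $M\de e^{MCT^*}<\tfrac12$, a connectedness (open–closed) argument on $\{t\le T^*:\ y\text{ exists on }[0,t]\text{ and }\sup_{[0,t]}\|y-x\|\le 1\}$, using (ii) to rule out blow-up inside $[0,T^*]$, shows this set is all of $[0,T^*]$; hence $T(y_0)>T^*$, and the same estimate applied to any two data in $B(x_0,\de)$ gives $\|x(\cdot,y_0)-x(\cdot,y_1)\|_{C([0,T^*],X)}\le M e^{MCT^*}\|y_0-y_1\|$, i.e.\ the asserted continuity.

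The contraction estimates and the Gronwall steps are routine; the point that requires care is the \emph{uniformity} of the local existence time over bounded sets of initial data established in the first step, since this is exactly what powers both the ``$\lim=\infty$'' form of the blow-up alternative in (ii) and the continuation argument in (iii). I would therefore state and use that uniformity explicitly rather than just the existence of some $\tau$ for the fixed $x_0$.
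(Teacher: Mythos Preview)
Your proof is correct and is exactly the standard argument. The paper does not actually prove this theorem: it states it as a well-known local existence result and refers to \cite[p.~343--345, p.~350]{Segal}, \cite[Section~4]{Cazenave}, and \cite[Section~6]{Pazy}. Your contraction--continuation--Gronwall scheme, together with the explicit observation that the local existence time depends on the initial datum only through its norm (which you correctly flag as the key point driving both (ii) and (iii)), is precisely the proof one finds in those references.
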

Gronwall's inequality and the above blow-up condition  (ii) imply the following global existence result.
\begin{corollary}\label{cor:lin-growth}
Besides the conditions of Theorem \ref{pazy}, assume that 
there exists $c\in L^1_{\mathrm{loc}}([0,\infty))$ such that
\begin{equation}\label{eq:lingrowth}
\|K(t, x)\|\le c(t)(1+\|x\|) \qquad \text{  for all } x \in X, \ t \ge0.
\end{equation}
Then for every $x_0\in X$ there is a unique solution $x(\cdot,x_0)$ of \eqref{ca1}
on $[0,\infty).$
\end{corollary}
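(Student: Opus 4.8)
The plan is to combine the local well-posedness from Theorem~\ref{pazy} with an a priori bound coming from the linear growth condition \eqref{eq:lingrowth}, in the style of the classical continuation argument. Fix $x_0\in X$ and let $T=T(x_0)\in(0,\infty]$ be the maximal existence time, with unique solution $x(\cdot,x_0)$ furnished by Theorem~\ref{pazy}(i). Suppose, for contradiction, that $T<\infty$. Then by Theorem~\ref{pazy}(ii) we have $\lim_{t\to T}\|x(t,x_0)\|=\infty$, so it suffices to produce a bound on $\|x(t,x_0)\|$ that stays finite as $t\uparrow T$.

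First I would set $\omega\ge 0$ and $M\ge 1$ so that $\|T(s)\|\le M e^{\omega s}$ for all $s\ge 0$ (such constants exist since $(T(t))_{t\ge0}$ is a $C_0$-semigroup). Applying $\|\cdot\|$ to the mild-solution identity \eqref{ca_problem} and using \eqref{eq:lingrowth}, for $t\in[0,T)$ one gets
\begin{equation*}
\|x(t)\|\le M e^{\omega t}\|x_0\|+\int_0^t M e^{\omega(t-s)} c(s)\bigl(1+\|x(s)\|\bigr)\,ds.
\end{equation*}
Writing $\varphi(t):=e^{-\omega t}\|x(t)\|$ and using $e^{-\omega s}\le 1$ (as $\omega\ge0$, $s\ge0$) inside the integral, this becomes
\begin{equation*}
\varphi(t)\le M\|x_0\|+M\int_0^t c(s)\,ds+M\int_0^t c(s)\,\varphi(s)\,ds.
\end{equation*}
Since $c\in L^1_{\mathrm{loc}}([0,\infty))$, on the bounded interval $[0,T]$ the function $s\mapsto M c(s)$ is integrable, and the function $t\mapsto M\|x_0\|+M\int_0^t c(s)\,ds$ is nondecreasing (hence bounded on $[0,T]$ by its value at $T$). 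Gronwall's inequality in integral form then yields
\begin{equation*}
\varphi(t)\le\Bigl(M\|x_0\|+M\int_0^T c(s)\,ds\Bigr)\exp\Bigl(M\int_0^T c(s)\,ds\Bigr)=:C<\infty
\end{equation*}
for all $t\in[0,T)$, whence $\|x(t)\|\le C e^{\omega T}$ on $[0,T)$. This contradicts $\lim_{t\to T}\|x(t)\|=\infty$, so $T=\infty$; uniqueness on $[0,\infty)$ is inherited from Theorem~\ref{pazy}(i) applied on each $[0,N]$.

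I do not expect a genuine obstacle here; the only point requiring a little care is the handling of the exponential weights, which is why I would reduce to the weighted function $\varphi$ before invoking Gronwall, and the invocation of the integral (rather than differential) form of Gronwall's lemma, since $c$ is only locally integrable and $x$ is merely continuous. One should also state Gronwall with the nondecreasing-majorant hypothesis so that pulling the $M\|x_0\|+M\int_0^t c$ term out of the exponential is legitimate. Everything else is a routine estimate on \eqref{ca_problem}.
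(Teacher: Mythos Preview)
Your proof is correct and follows precisely the approach the paper indicates: the paper merely states that ``Gronwall's inequality and the above blow-up condition (ii) imply the following global existence result'' and refers to \cite[Corollary 6.2.3]{Pazy}. Your argument fills in exactly these details---combining the mild-solution formula, the linear growth bound \eqref{eq:lingrowth}, and the integral form of Gronwall's lemma to contradict the blow-up alternative of Theorem~\ref{pazy}(ii)---and nothing more is needed.
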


See e.g.\ \cite[Corollary 6.2.3]{Pazy}, where the compactness of $(T(t))_{t \ge 0}$ is irrelevant under our assumptions.

In a similar way, the linear growth condition of the above corollary leads to  bounds on the solution
without assuming Lipschitz continuity of $K(t,\cdot)$. 

\begin{proposition}\label{prop:growth}
Let $A$ generate the $C_0$-semigroup $(T(t))_{t \ge 0}$ on a Banach space $X$ with
$\|T(t)\|\le M e^{\omega t}$ for $t \ge 0$ and some $\omega\ge0$, and  let $K : [0,\infty) \times X \to X$ be
continuous separately in both variables. 

\smallskip

a) Assume that $K$ satisfies \eqref{eq:lingrowth}.
Let $x:[0,T)\to X$ be a solution of  \eqref{ca1} and $t_0\in (0,T)$. Then $\|x(t)\|\le C(1+\|x_0\|)$
for $t\in[0,t_0]$ and a constant $C$ depending only on $t_0$, $M$, $\omega$ and $c(\cdot)$.

\smallskip

Let now \eqref{ca1} have a global solution $x=x(t,x_0)$ for all $x_0\in X_0$ and some subset $X_0\subseteq X$.

\smallskip

b) If \eqref{eq:lingrowth} holds,
then for every  bounded set $B \subset X_0$ 
 the set
$\{x(t, x_0):\, 0\le t\le t_0, \,\, x_0 \in B\}$
is bounded for each $t_0>0$.

\smallskip

c) Assume that $K$ is bounded on bounded sets and that
there exist $r_0 > 1$ and a non-increasing function
$c: [r_0, \infty) \to [0, \infty)$  such that 
$\|K(t,x)\| \le c(\|x\|)\|x\|$
for all $t\ge0$ and $x \in X$ with $\|x\|\ge r_0.$ 
 If $\gamma > \omega$ and $\lim_{r \to \infty} c(r) = 0,$ then there
exists a constant $M_{\gamma, x_0}$ such that $\|x(t,x_0)\|\le M_{\gamma, x_0} e^{\gamma t}$
for $t\ge0$ and $x_0\in X_0$ .
\end{proposition}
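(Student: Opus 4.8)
The plan is to prove the three assertions via Gronwall-type estimates applied to the mild formulation \eqref{ca_problem}, exploiting the exponential bound $\|T(t)\|\le Me^{\omega t}$ throughout. For part a), I would fix $t_0\in(0,T)$ and work on $[0,t_0]$. Applying $\|T(t-s)\|\le Me^{\omega(t-s)}\le Me^{\omega t_0}$ and the linear growth bound \eqref{eq:lingrowth} to \eqref{ca_problem} gives
\[
\|x(t)\|\le Me^{\omega t_0}\|x_0\|+Me^{\omega t_0}\int_0^t c(s)(1+\|x(s)\|)\,ds.
\]
Writing $\phi(t)=1+\|x(t)\|$, this becomes $\phi(t)\le \big(Me^{\omega t_0}(1+\|x_0\|)+1\big)+Me^{\omega t_0}\int_0^t c(s)\phi(s)\,ds$, and since $c\in L^1_{\mathrm{loc}}$ the constant $\int_0^{t_0}c(s)\,ds$ is finite; Gronwall's inequality then yields $\phi(t)\le \big(Me^{\omega t_0}(1+\|x_0\|)+1\big)\exp\!\big(Me^{\omega t_0}\|c\|_{L^1(0,t_0)}\big)$ for $t\in[0,t_0]$, which is exactly a bound of the form $C(1+\|x_0\|)$ with $C$ depending only on $t_0,M,\omega,c(\cdot)$.

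For part b), the work is essentially already done: given a bounded set $B\subseteq X_0$ and $t_0>0$, apply part a) with this $t_0$ (legitimate since the solutions are global, so $(0,T)=(0,\infty)\ni t_0$) to each $x_0\in B$. The constant $C=C(t_0,M,\omega,c(\cdot))$ does not depend on $x_0$, so $\|x(t,x_0)\|\le C(1+\sup_{x_0\in B}\|x_0\|)$ uniformly for $t\in[0,t_0]$ and $x_0\in B$, giving boundedness of $\{x(t,x_0):0\le t\le t_0,\ x_0\in B\}$.

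For part c), fix $\gamma>\omega$ and $x_0\in X_0$, and let $x(\cdot)=x(\cdot,x_0)$ be the global solution. The idea is to split the integral in \eqref{ca_problem} according to whether $\|x(s)\|<r_0$ or $\|x(s)\|\ge r_0$. On the sublevel set $\{\|x(s)\|<r_0\}$ the term $\|K(s,x(s))\|$ is bounded by $\sup_{t\ge0,\ \|x\|\le r_0}\|K(t,x)\|=:C_0<\infty$ (finite since $K$ is bounded on bounded sets); on the superlevel set we use $\|K(s,x(s))\|\le c(\|x(s)\|)\|x(s)\|\le c(r_0)\cdot$ — but this last step needs care, because $c$ is only non-increasing in $\|x\|$, so $c(\|x(s)\|)\le c(r_0)$, yet $c(r_0)$ need not be small. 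The point of $\lim_{r\to\infty}c(r)=0$ is that I should instead choose a threshold $R=R(\gamma)\ge r_0$ large enough that $Mc(r)\le \tfrac12(\gamma-\omega)$ for all $r\ge R$; on $\{\|x(s)\|<R\}$ bound $\|K(s,x(s))\|$ by a constant $C_R$ depending on $R$ (again using boundedness of $K$ on bounded sets), and on $\{\|x(s)\|\ge R\}$ use $\|K(s,x(s))\|\le c(\|x(s)\|)\|x(s)\|\le \tfrac{\gamma-\omega}{2M}\|x(s)\|$. Then from \eqref{ca_problem},
\[
\|x(t)\|\le Me^{\omega t}\|x_0\|+M\int_0^t e^{\omega(t-s)}\Big(C_R+\tfrac{\gamma-\omega}{2M}\|x(s)\|\Big)ds.
\]
Multiplying through by $e^{-\gamma t}$ and setting $\psi(t)=e^{-\gamma t}\|x(t)\|$, the free term is bounded uniformly in $t$ (because $\int_0^t e^{(\omega-\gamma)(t-s)}ds\le(\gamma-\omega)^{-1}$ and $e^{\omega t}\le e^{\gamma t}$), while the $\|x(s)\|$ term becomes $\tfrac{\gamma-\omega}{2}\int_0^t e^{(\omega-\gamma)(t-s)}\psi(s)\,ds\le\tfrac12\sup_{0\le s\le t}\psi(s)$. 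A standard absorption argument (take $\sup_{0\le s\le t}$ on the left, subtract $\tfrac12\sup\psi$) then gives $\sup_{0\le s\le t}\psi(s)\le 2\big(M\|x_0\|+MC_R(\gamma-\omega)^{-1}\big)=:M_{\gamma,x_0}$ for all $t$, which is the claimed bound $\|x(t,x_0)\|\le M_{\gamma,x_0}e^{\gamma t}$. The main obstacle is precisely the handling of the nonlinearity on the intermediate range $r_0\le\|x\|<R$: one must not expect $c$ itself to be small there, so the boundedness of $K$ on bounded sets is essential to absorb that piece into the constant, and the smallness $\lim_{r\to\infty}c(r)=0$ is used only to make the coefficient of the genuinely unbounded part strictly smaller than $(\gamma-\omega)/M$, enabling the absorption step. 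One should also note that $M_{\gamma,x_0}$ genuinely depends on $x_0$ (through $\|x_0\|$) and on $\gamma$ (through $R=R(\gamma)$ and hence $C_R$), consistent with the statement.
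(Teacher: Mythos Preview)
Your arguments for parts a) and b) are exactly what the paper does: it states that a) ``is a direct consequence of Gronwall's inequality'' and that ``part b) follows immediately from a).'' For part c) the paper gives no argument at all and simply refers to Webb \cite{Webb}, so your threshold-splitting and absorption argument supplies details the paper omits; the approach is standard and matches the spirit of the cited reference.

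One point to watch in your treatment of c): when you bound $\|K(s,x(s))\|$ on $\{\|x(s)\|<R\}$ by a constant $C_R$ via ``boundedness of $K$ on bounded sets,'' you are implicitly using a bound uniform in $s\in[0,\infty)$. If ``bounded on bounded sets'' is read as boundedness on bounded subsets of $[0,\infty)\times X$ (the convention elsewhere in the paper, cf.\ assumption (A0)), this would only give a constant $C_{R,t_0}$ on each finite time interval, and your final $M_{\gamma,x_0}$ would then depend on $t$. The fix is easy on the range $r_0\le\|x(s)\|<R$, where the hypothesis $\|K(t,x)\|\le c(\|x\|)\|x\|\le c(r_0)R$ is already uniform in $t$; only the region $\|x(s)\|<r_0$ genuinely needs a time-uniform bound on $K$. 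Since the paper defers to \cite{Webb} here, the intended hypothesis is presumably the stronger, time-uniform one---but it is worth making this explicit in your write-up.
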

Assertion a) is a direct consequence of Gronwall's inequality (as in Corollary~\ref{cor:lin-growth}),
and part b) follows immediately from a). For the refinement in part~c), we refer to \cite{Webb}.
So, if $K$ is sublinear as $\|x\|\to\infty$, under the above assumptions the solutions of
\eqref{ca_problem} have the same growth bound as those for its linear part.

If  the mapping $x\mapsto K(t,x)$ is compact on $X,$ then one may drop
(local) Lipschitz type assumptions on $K$ at the price
of losing the uniqueness of  mild solutions to \eqref{ca_problem}.
(Roughly, one replaces Banach's fixed point theorem, guaranteeing the uniquneess of a fixed point,
with Schauder's fixed point theorem, where the uniquenesss is hardly available.)
However,  as we have already remarked above, for our purposes  mere existence suffices.
The next result from \cite{Sch83} is an example of existence theorems based on compactness properties
of the nonlinearity. We note that in this and related papers (such as \cite{Bothe98} mentioned below)
typically the concept of an integral solution is used. However, in our setting integral and mild solutions
coincide, see the Corollary to Proposition~3 in \cite{Sch83}.

\begin{theorem}\label{thm:cp1}
Let $A$ be the generator of a $C_0$-semigroup on a Banach space $X,$
and  $K: [0, \infty)\times X \to X$ be  jointly continuous and map bounded sets
in $[0, \infty)\times X$ to relatively compact sets in $X$.
Then for every $x_0 \in X$ there exists a maximal existence time $T=T(x_0)$ such that \eqref{ca1}
admits a solution $x$ on $[0,T).$ If  $T <\infty$, then $x$ is unbounded and
$\{K(t, x(t, x_0)): 0< t < T\}$ is not relatively compact.
Thus, in particular, if the range of  $K$ is relatively compact, then $T$ must be
infinite. 
\end{theorem}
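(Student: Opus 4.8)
The plan is to run the classical Schauder fixed point scheme for semilinear Cauchy problems with compact nonlinearities, the only genuinely delicate point being compactness of the variation-of-constants map in the absence of norm continuity of $(T(t))_{t\ge0}$. First I would establish local existence. Fixing $x_0\in X$, for suitable $\tau,\rho>0$ I consider the closed bounded convex set
$$
\Sigma=\bigl\{x\in C([0,\tau],X):x(0)=x_0,\ \textstyle\sup_{t\le\tau}\|x(t)-T(t)x_0\|\le\rho\bigr\}
$$
and the map $(\Phi x)(t)=T(t)x_0+\int_0^tT(t-s)K(s,x(s))\,ds$. Since $K$ maps bounded sets to relatively (hence norm-) bounded sets, $K$ is bounded on $[0,\tau]\times\{y:\|y-T(t)x_0\|\le\rho,\ t\le\tau\}$, and the closure of the convex hull $W$ of the (relatively compact) image of this set is compact; choosing $\tau$ small in terms of $\sup_{\sigma\le\tau}\|T(\sigma)\|$ and $\sup\{\|w\|:w\in W\}$ makes $\Phi(\Sigma)\subseteq\Sigma$. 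Joint continuity of $K$ together with dominated convergence gives continuity of $\Phi$ on $\Sigma$.

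The key technical step is to show that $\Phi(\Sigma)$ is relatively compact in $C([0,\tau],X)$, which I would obtain from Arzel\`a--Ascoli. For pointwise relative compactness, $(\Phi x)(t)\in T(t)x_0+t\,\overline{\mathrm{conv}}\,\{T(\sigma)w:\sigma\in[0,\tau],\ w\in W\}$, and the latter set is compact, being the closed convex hull (compact in a Banach space) of the continuous image of the compact set $[0,\tau]\times W$ under $(\sigma,w)\mapsto T(\sigma)w$. For equicontinuity in $t$, I split $(\Phi x)(t')-(\Phi x)(t)$ into the semigroup increment $(T(t')-T(t))x_0$, a short integral over $[t,t']$ bounded by $C|t'-t|$, and $\int_0^t[I-T(t'-t)]T(t-s)K(s,x(s))\,ds$, which is small uniformly in $x\in\Sigma$ as $t'\to t$ because each $T(t-s)K(s,x(s))$ lies in the fixed compact set above and $I-T(\cdot)$ tends strongly to $0$, hence uniformly on compacta. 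Schauder's theorem then yields a fixed point of $\Phi$, i.e.\ a mild solution on $[0,\tau]$. Ordering solutions by extension and applying Zorn's lemma (as in the proof of Theorem~\ref{pazy}) produces a maximal solution $x$ on an interval $[0,T)$ with $T=T(x_0)\in(0,\infty]$.

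For the blow-up alternative, suppose $T<\infty$. If $x$ were bounded on $[0,T)$, then $\{(t,x(t)):0\le t<T\}$ would be a bounded subset of $[0,\infty)\times X$, hence $\{K(t,x(t)):0<t<T\}$ relatively compact; so it suffices to rule out the latter. Assuming $\{K(s,x(s)):0<s<T\}$ is relatively compact with compact closed convex hull $W$, I would show that $x(t)=T(t)x_0+\int_0^tT(t-s)K(s,x(s))\,ds$ converges as $t\to T$: the first term tends to $T(T)x_0$, and, writing $g=K(\cdot,x(\cdot))$,
$$
\int_0^tT(t-s)g(s)\,ds-\int_0^TT(T-s)g(s)\,ds=\int_0^t[I-T(T-t)]T(t-s)g(s)\,ds-\int_t^TT(T-s)g(s)\,ds,
$$
where the last integral is $O(T-t)$ and the first tends to $0$ by dominated convergence, since $T(t-s)g(s)$ stays in the compact set $\{T(\sigma)w:\sigma\in[0,T],\ w\in W\}$ on which $I-T(h)\to0$ uniformly as $h\to0^+$. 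Hence $x(T):=\lim_{t\to T}x(t)$ exists, $x$ solves \eqref{ca_problem} on $[0,T]$, and concatenating with a local solution started at time $T$ from $x(T)$ extends $x$ strictly past $T$, contradicting maximality. Therefore $\{K(t,x(t)):0<t<T\}$ is not relatively compact; in particular $x$ is unbounded by the observation above; and if the range of $K$ is relatively compact this situation cannot occur, so $T=\infty$.

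The main obstacle, recurring both in the compactness of $\Phi(\Sigma)$ and in the convergence of the variation-of-constants integral at the blow-up time, is that $(T(t))_{t\ge0}$ is only strongly continuous: one cannot factor a compact operator out of the convolution, and the whole argument rests on the two elementary but essential facts that the closed convex hull of a compact set in a Banach space is compact and that $(\sigma,w)\mapsto T(\sigma)w$ is jointly continuous, hence uniformly continuous on compact sets.
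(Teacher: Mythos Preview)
The paper does not prove this theorem: it is quoted from \cite{Sch83} (see the sentence preceding the statement and the Remark immediately following it, which explains how to reduce to the contractive case treated there). Your proposal is a correct, self-contained proof along the standard Schauder fixed-point route for compact nonlinearities, and it is essentially the argument one finds in \cite{Sch83}; the two technical pillars you isolate---that the closed convex hull of a compact set is compact and that $(\sigma,w)\mapsto T(\sigma)w$ is jointly continuous, hence uniformly continuous on compacta---are exactly the ingredients the paper itself uses later in Proposition~\ref{prop1} to handle the same compactness issue in the variation-of-constants integral. So there is no discrepancy to discuss: your proof simply supplies what the paper outsources to the literature.
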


\begin{remark} The statement is formulated in \cite[Theorem 2]{Sch83}
 for a semigroup of contractions. However, one can rescale the operators $T(t)$ and $K(t,\cdot)$
 as in the proof of Theorem~\ref{main}, thus reducing to a bounded semigroup $(T(t))_{t\ge0}$.
 Passing to the equivalent norm
$\|x\|_c:=\sup_{t\ge0} \|T(t)x\|$, the semigroup then becomes contractive, and we can apply  \cite[Theorem 2]{Sch83}.
\end{remark}

Combined with Proposition~\ref{prop:growth}, the above theorem yields the following result which we already used
after Theorem~\ref{main0}. For real Banach spaces, it was proved
in \cite{Bothe98} in a more general framework of
differential inclusions and with weaker compactness and regularity assumptions on $K$.
\begin{corollary}\label{thm:cp2}
Assume  the conditions of Theorem~\ref{thm:cp1} and \eqref{eq:lingrowth} hold. Then all solutions of \eqref{ca1}
exist on the whole of $[0,\infty).$
\end{corollary}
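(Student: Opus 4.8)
The plan is to combine the local-existence-with-blow-up dichotomy from Theorem~\ref{thm:cp1} with the a priori bound provided by Proposition~\ref{prop:growth}(a). The two hypotheses we are handed are exactly the inputs of those two results: the conditions of Theorem~\ref{thm:cp1} (joint continuity of $K$, $K$ maps bounded sets to relatively compact sets, $A$ generates a $C_0$-semigroup) and the linear growth bound \eqref{eq:lingrowth}, i.e.\ $\|K(t,x)\|\le c(t)(1+\|x\|)$ with $c\in L^1_{\mathrm{loc}}([0,\infty))$. Fix $x_0\in X$. Theorem~\ref{thm:cp1} yields a maximal existence time $T=T(x_0)\in(0,\infty]$ and a mild solution $x$ on $[0,T)$, together with the blow-up alternative: if $T<\infty$, then $x$ is unbounded on $[0,T)$ (and moreover $\{K(t,x(t)):0<t<T\}$ is not relatively compact).

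First I would rescale to put ourselves in the setting of Proposition~\ref{prop:growth}, which assumes $\|T(t)\|\le Me^{\omega t}$ for some $\omega\ge0$. Any $C_0$-semigroup satisfies such a bound with suitable $M\ge1$ and $\omega\ge0$ (shift $\omega$ up to be nonnegative if necessary), so no genuine rescaling of $K$ is even needed here; the hypotheses of Proposition~\ref{prop:growth} are met verbatim, with $K$ separately continuous (which follows from joint continuity) and satisfying \eqref{eq:lingrowth}. Then I would argue by contradiction: suppose $T=T(x_0)<\infty$. Apply Proposition~\ref{prop:growth}(a) with any $t_0\in(0,T)$ to get $\|x(t)\|\le C(1+\|x_0\|)$ for $t\in[0,t_0]$, where $C$ depends only on $t_0$, $M$, $\omega$ and $c(\cdot)$. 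The subtle point is that $C$ depends on $t_0$, so to bound $x$ on all of $[0,T)$ I need a bound uniform as $t_0\uparrow T$. This is where I would inspect the Gronwall argument behind part~(a): from \eqref{ca_problem} and $\|T(t)\|\le Me^{\omega t}$ one gets $\|x(t)\|\le Me^{\omega t}\|x_0\|+\int_0^t Me^{\omega(t-s)}c(s)(1+\|x(s)\|)\,ds$, and Gronwall's inequality gives a bound of the form $\|x(t)\|\le\bigl(Me^{\omega T}\|x_0\|+Me^{\omega T}\|c\|_{L^1(0,T)}\bigr)\exp\!\bigl(M e^{\omega T}\|c\|_{L^1(0,T)}\bigr)$ for all $t\in[0,T)$, since $\|c\|_{L^1(0,T)}<\infty$ by $c\in L^1_{\mathrm{loc}}$ and $T<\infty$. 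In particular $x$ is bounded on $[0,T)$, contradicting the blow-up alternative of Theorem~\ref{thm:cp1}. Hence $T(x_0)=\infty$, and since $x_0$ was arbitrary, every solution is global.

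The main obstacle, such as it is, is the bookkeeping in the previous paragraph: making sure the Gronwall constant can be taken uniform on $[0,T)$ rather than merely on each $[0,t_0]$. This is immediate once one uses $T<\infty$ to replace $e^{\omega t}$ by $e^{\omega T}$ and $\int_0^t c$ by $\|c\|_{L^1(0,T)}$, so there is no real difficulty — one just has to be careful not to quote Proposition~\ref{prop:growth}(a) as a black box (whose constant is $t_0$-dependent) but to note that its proof delivers the uniform bound when $T<\infty$. An alternative, cleaner route avoiding any re-examination of the proof: for $t_0\in(0,T)$ the bound $\|x(t)\|\le C(t_0)(1+\|x_0\|)$ on $[0,t_0]$ gives $\limsup_{t\uparrow T}\|x(t)\|\le C(T^-)(1+\|x_0\|)$ \emph{provided} $C(t_0)$ stays bounded as $t_0\uparrow T<\infty$; and indeed the explicit dependence of $C$ on $t_0$ is through $e^{\omega t_0}$ and $\|c\|_{L^1(0,t_0)}$, both of which converge to finite limits as $t_0\uparrow T$. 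Either way the contradiction with $\lim_{t\to T}\|x(t)\|=\infty$ is reached, and the corollary follows.
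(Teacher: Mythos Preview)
Your proposal is correct and follows exactly the approach the paper indicates: the paper does not spell out a proof but simply states that the corollary follows by combining Theorem~\ref{thm:cp1} with Proposition~\ref{prop:growth}, which is precisely your argument via the blow-up alternative and the Gronwall bound. Your care about the uniformity of the constant $C(t_0)$ as $t_0\uparrow T<\infty$ is the only point that needs checking, and you handle it correctly.
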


Finally, we will need a result yielding the existence of a unique propagator to \eqref{ca1}
in the linear setting, see e.g. \cite[Corollary VI.9.20]{EnNa00}.
\begin{proposition}\label{linear_ex}
Let $A$ be the generator of a $C_0$-semigroup $(T(t))_{t \ge 0}$ on Banach space $X,$
and let $K: [0,\infty)\to B(X)$ be strongly continuous.
Then there exists a unique evolution family $(U(t,\tau))_{t \ge \tau \ge 0}\subset B(X)$
such that
\begin{equation}\label{linear_var_con}
U(t,\tau)x= T(t-\tau)x + \int_{\tau}^{t}T(t-s)K(s)U(s,\tau)x\, ds, \qquad t \ge \tau.
\end{equation}
\end{proposition}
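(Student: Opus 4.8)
The plan is to construct the evolution family $(U(t,\tau))_{t\ge\tau\ge0}$ by a Picard-type iteration on the integral equation \eqref{linear_var_con}, working on each compact time interval and then patching. First I would fix $\tau\ge0$ and $T>\tau$, and on the Banach space $C([\tau,T],B(X))$ (with the sup norm, $B(X)$ carrying the operator norm) introduce the operator
\[
(\Phi V)(t):=T(t-\tau)+\int_\tau^t T(t-s)K(s)V(s)\,ds,\qquad t\in[\tau,T].
\]
Here $M:=\sup_{0\le s\le T}\|T(s)\|<\infty$ and $\kappa:=\sup_{\tau\le s\le T}\|K(s)\|<\infty$ by strong continuity and the uniform boundedness principle; also $s\mapsto T(t-s)K(s)V(s)$ is strongly continuous, hence the Bochner integral and the map $t\mapsto(\Phi V)(t)$ are well defined and strongly continuous (one checks strong, not norm, continuity of $t\mapsto(\Phi V)(t)x$, which suffices since we only need $V\in C_s$). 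I would then show by induction the standard estimate $\|(\Phi^n V_1)(t)-(\Phi^n V_2)(t)\|\le \frac{(M\kappa)^n (t-\tau)^n}{n!}\,\|V_1-V_2\|_\infty$, so some iterate $\Phi^n$ is a strict contraction and $\Phi$ has a unique fixed point $U(\cdot,\tau)\in C([\tau,T],B(X))$ (in the strong topology). Letting $T\to\infty$ and using uniqueness on overlapping intervals gives $U(\cdot,\tau)$ on all of $[\tau,\infty)$.

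Next I would verify the evolution-family (cocycle) identity $U(t,\sigma)=U(t,\tau)U(\tau,\sigma)$ for $t\ge\tau\ge\sigma$, together with $U(t,t)=I$. The latter is immediate from \eqref{linear_var_con}. For the former, fix $\sigma$ and $x$, and observe that both $t\mapsto U(t,\tau)U(\tau,\sigma)x$ and $t\mapsto U(t,\sigma)x$ solve, on $[\tau,\infty)$, the same integral equation
\[
y(t)=T(t-\tau)U(\tau,\sigma)x+\int_\tau^t T(t-s)K(s)y(s)\,ds;
\]
for the first this is just \eqref{linear_var_con} applied with starting time $\tau$, while for the second one splits the integral in \eqref{linear_var_con} at $\tau$ and uses \eqref{linear_var_con} again to rewrite $U(\tau,\sigma)x$. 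Uniqueness of the fixed point of the corresponding $\Phi$ (now with inhomogeneity $T(t-\tau)U(\tau,\sigma)x$) then forces the two to coincide, giving the cocycle law. Joint strong continuity of $(t,\tau)\mapsto U(t,\tau)x$ on $\{t\ge\tau\ge0\}$ follows from the integral equation, the strong continuity of the semigroup, and a Gronwall estimate controlling $\|U(t,\tau)\|\le M e^{M\kappa(t-\tau)}$ locally uniformly.

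Finally I would record uniqueness of the whole evolution family: any $(\widetilde U(t,\tau))$ satisfying \eqref{linear_var_con} is, for each fixed $\tau$, a fixed point of $\Phi$ on every $[\tau,T]$, hence equals $U(\cdot,\tau)$. I do not expect a genuine obstacle here; the only mildly delicate point is keeping everything in the \emph{strong} operator topology rather than the norm topology (the integrals and continuity statements must be read pointwise in $x\in X$), since $t\mapsto T(t)$ and $t\mapsto K(t)$ are in general only strongly continuous. Alternatively, since this is a standard fact, one may simply cite \cite[Corollary VI.9.20]{EnNa00} as indicated and omit the argument; the sketch above is what the citation encodes.
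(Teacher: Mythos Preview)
Your sketch is correct and is precisely the standard argument underlying the cited result. Note, however, that the paper does not prove this proposition at all: it simply records it as a known fact with the reference to \cite[Corollary VI.9.20]{EnNa00}, so there is no ``paper's own proof'' to compare against beyond the citation you yourself mention at the end.
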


\section{Growth and instability for continuous time}\label{continuous}

Following a similar strategy as in Section \ref{discrete}, we now obtain lower bounds
for solutions of semilinear abstract differential equations with a linear part being a generator of 
a $C_0$-semigroup. In view of the failure of the spectral mapping theorem for $C_0$-semigroups, this task 
is more  demanding than the one treated in Section \ref{discrete}. However,
the ideas remain the same as for the systems with discrete time.

We start with proving auxiliary (and probably known) results on compactness.
Below we will assume that the mapping $K:[0,\infty)\times X\to X$ is
separately continuous and \emph{collectively compact}, i.e., the set $K\bigl( [0,t_0]\times B\bigr)$
is precompact in $X$ for each bounded set $B\subset X$ and each $t_0\ge0$.
For this notion in the linear case one may consult e.g.\ \cite{Anselone}.

A simple condition for collective compactness is provided by assuming continuity in $t$ uniformly
for $x$ in bounded subsets.
\begin{lemma}\label{lemmacom}
Let $K:[0,\infty)\times X\to X$ be separately continuous  such that $K(s,\cdot):X\to X$ 
is compact for all $s\ge 0$. Suppose that for each $t_0,\e>0$ and each bounded set $B\subset X$
there exists a  $\delta_\e>0$ 
such that
$$
\|K(s,x)-K(t,x)\|\le \e 
$$
for all $s,t\in[0,t_0]$ with $|s-t|\le\delta_\e$ and $x\in B$. Then  $K$ is collectively compact.
\end{lemma}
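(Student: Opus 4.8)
The plan is to show that the image $K\bigl([0,t_0]\times B\bigr)$ is totally bounded for each $t_0>0$ and each bounded $B\subset X$, since $X$ is complete and total boundedness is equivalent to precompactness there. Fix such $t_0$ and $B$, and fix $\e>0$. First I would use the hypothesis to produce $\delta_\e>0$ with $\|K(s,x)-K(t,x)\|\le\e/2$ whenever $s,t\in[0,t_0]$, $|s-t|\le\delta_\e$, and $x\in B$. Then choose a finite $\delta_\e$-net $0=t_1<t_2<\dots<t_N=t_0$ of the compact interval $[0,t_0]$, so that every $s\in[0,t_0]$ lies within $\delta_\e$ of some $t_j$.

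Next I would invoke compactness of each individual map $K(t_j,\cdot)$: the set $K(t_j,B)$ is precompact, hence totally bounded, so it admits a finite $\e/2$-net $\{z_{j,1},\dots,z_{j,m_j}\}\subset X$. Collecting these over $j=1,\dots,N$ gives a finite set $Z=\{z_{j,k}: 1\le j\le N,\ 1\le k\le m_j\}$. I claim $Z$ is an $\e$-net for $K\bigl([0,t_0]\times B\bigr)$. Indeed, given $(s,x)\in[0,t_0]\times B$, pick $t_j$ with $|s-t_j|\le\delta_\e$; then $\|K(s,x)-K(t_j,x)\|\le\e/2$, and picking $z_{j,k}$ with $\|K(t_j,x)-z_{j,k}\|\le\e/2$ yields $\|K(s,x)-z_{j,k}\|\le\e$ by the triangle inequality. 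Since $\e>0$ was arbitrary, $K\bigl([0,t_0]\times B\bigr)$ is totally bounded, hence precompact, which is exactly collective compactness.

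There is essentially no hard part here; the argument is a routine $\e/2$-$\e/2$ splitting combining the equicontinuity-in-$t$ hypothesis with the sectionwise compactness, plus a compactness (finite subcover) argument on $[0,t_0]$. The only point requiring a moment's care is to make sure the $\delta_\e$ from the hypothesis can be taken uniform over all of $B$ for the \emph{fixed} $t_0$ and $\e$ — but that is precisely what is assumed — and that finitely many $t_j$ suffice, which follows from compactness of $[0,t_0]$. The statement ``separately continuous'' is used only implicitly (via the hypotheses actually invoked); no deeper continuity of the two-variable map is needed for this direction.
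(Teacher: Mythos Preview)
Your proof is correct and follows essentially the same approach as the paper's: both use the uniform-in-$x$ continuity in $t$ to pick a finite $\delta_{\e/2}$-net $\{t_1,\dots,t_N\}$ in $[0,t_0]$, invoke compactness of each $K(t_j,\cdot)$ to get finite $\e/2$-nets, and combine via the triangle inequality. The only cosmetic difference is that the paper first forms the union $C=\bigcup_j K(t_j,B)$ and chooses one $\e/2$-net for it, whereas you take a net for each $K(t_j,B)$ separately and union them; this is immaterial.
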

\begin{proof}
Let $t_0>0$ and $B\subset X$ be a bounded set. 
Take $\e>0$.
Let $\{t_1,\dots,t_n\}$ be a finite $\delta_{\e/2}$-net 
in the interval $[0,t_0]$.
For each $j=1,\dots,n$ the set $K(t_j)B$ is precompact and so is the set $C=\{K(t_j,b): j\in\{1,\dots,n\}, b\in B\}$.
Let $x_1,\dots,x_m$ be a finite $\frac {\e}{2}$-net for $C$. 

Let $0\le s\le t_0$ and $u\in B$. There exist $j\in\{1,\dots, n\}$ with $|s-t_j|\le\delta_{\e/2}$ 
and $i\in\{1,\dots, m\}$ with $\|K(t_j,u)-x_i\|<\e/2$. Using also the assumption, we obtain
$$
\|K(s,u)-x_i\|\le \|K(s,u)-K(t_j,u)\|+\|K(t_j,u)-x_i\|
\le \frac{\e}{2}+\frac{\e}{2}=\e. 
$$
So $\{x_1,\dots,x_m\}$ is a finite $\e$-net for the set $K\bigl([0,t_0]\times B\bigr)$. 
Since $\e>0$ was arbitrary, the set $K\bigl([0,t_0]\times B\bigr)$ is precompact.
\end{proof}

Before stating a second compactness criterion,
we collect our standing assumptions for semilinear evolution equations.

\begin{itemize}
\item[(A0)] Assume that $K:[0,\infty]\times X \to X$ is collectively compact
and  continuous separately in both variables, and let
$(T(t))_{t \ge 0}$ be a $C_0$-semigroup on a Banach space $X$ with generator $A.$
\item [(A1)] 
Let  (A0) hold, $X_0\subset X$ and
assume that for every $x _0 \in X_0$ there exists a unique global solution
$x(\cdot,x_0)\in C([0,\infty),X)$ of
\begin{equation}\label{xt}
x(t,x_0)=T(t)x_0+\int_0^t T(t-s)K(s, x(s,x_0)) \, ds, \qquad t \ge 0.
\end{equation}
So the map $x:[0,\infty)\times X_0\to X$ is well-defined, and we assume that it is continuous
separately in both variables.
\item [(A2)] Let (A0) hold, and $B\subset X_0$ be a bounded set. Assume that
for every $x_0\in B$ there exist a 
solution $x=x(\cdot,x_0)$ of \eqref{xt} such that the set
$\{x(t, x_0):\, 0\le t\le t_0, \,\, x_0 \in B\}$
is bounded for each $t_0>0$.
\end{itemize}

If (A0) holds and the solutions of \eqref{xt} are unique, then (A1) is satisfied for all $B \subset X_0$
provided that the map
 $x:[0,\infty)\times X_0\to X$ maps bounded into bounded sets. 

 In condition (A2), the map $(t,x_0)\mapsto x(t,x_0)$ can be defined for each bounded subset
 of $X_0,$ though it depends on this subset and the choice of corresponding solutions.
Note that  Corollary \ref{thm:cp2}
provides an example of $K$ satisfying (A2) for $X=X_0$ and all bounded sets $B \subset X_0.$
One can treat the non-unique case in a more systematic manner, for  instance using ``generalized semi-flows''.
See e.g.\ \cite{Ball},  where also nonlinear damped wave equations are treated.
 In this paper, we preferred however to avoid such concepts.

\begin{proposition}\label{prop1}
Let assumption (A2) hold, and let $t_0>0$ be fixed.
Let $C$ be the set whose elements are of the form
\begin{equation*}
\int_0^t T(t-s)K(s, x(s,x_0))\, ds
\end{equation*}
for all $x_0\in B$ and $t\in[0, t_0],$ where $x(\cdot, x_0)$ and $B$ are given by (A2).
Then $C$ is precompact in $X$.
\end{proposition}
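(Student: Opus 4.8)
The plan is to show that the set
\[
C=\Bigl\{\int_0^t T(t-s)K(s,x(s,x_0))\,ds:\ x_0\in B,\ t\in[0,t_0]\Bigr\}
\]
is precompact by verifying it is totally bounded. First I would record the basic ingredients coming from (A2): since $B$ is bounded and the solutions $x(\cdot,x_0)$ have ranges lying in the bounded set $S(t_0,B)=\{x(t,x_0):0\le t\le t_0,\ x_0\in B\}$, collective compactness of $K$ gives that
\[
D:=K\bigl([0,t_0]\times S(t_0,B)\bigr)
\]
is precompact in $X$; let $R:=\sup\{\|d\|:d\in D\}<\infty$. Also, by the uniform boundedness principle, $M_0:=\sup_{0\le r\le t_0}\|T(r)\|<\infty$. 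These bounds control the integrand uniformly in $x_0$ and $s$.

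Next I would approximate the integral by Riemann sums. Fix $\e>0$. Using strong continuity of $(T(r))_{r\ge0}$ together with precompactness of $D$, a standard equicontinuity argument shows that the map $r\mapsto T(r)$ is uniformly continuous on $[0,t_0]$ when tested against $D$: there is $\delta>0$ such that $\|T(r)d-T(r')d\|\le\e$ for all $d\in D$ whenever $|r-r'|\le\delta$, $r,r'\in[0,t_0]$. (Concretely: cover $D$ by finitely many $\e/3$-balls centered at $d_1,\dots,d_N$, pick $\delta$ handling each $T(\cdot)d_i$ by uniform continuity on the compact interval, and use $M_0$ to absorb the errors.) Partition $[0,t_0]$ into subintervals of length $\le\delta$, with nodes $0=s_0<s_1<\dots<s_m=t_0$. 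For a given $t\in[0,t_0]$, say $t\in[s_{j},s_{j+1}]$, compare $\int_0^t T(t-s)K(s,x(s,x_0))\,ds$ with the sum $\sum_{i=0}^{j-1}(s_{i+1}-s_i)\,T(t-s_i)\,K(s_i,x(s_i,x_0))$; the difference is bounded by $C_1\e$ for a constant $C_1$ depending only on $t_0,M_0,R$, using the choice of $\delta$ on each block together with continuity of $s\mapsto T(t-s)K(s,x(s,x_0))$ and the uniform bound $R$. Thus every element of $C$ is within $C_1\e$ of an element of the set
\[
C'=\Bigl\{\sum_{i=0}^{j-1}(s_{i+1}-s_i)\,T(t-s_i)\,v_i:\ 0\le j\le m,\ t\in[s_j,s_{j+1}],\ v_i\in D\Bigr\}.
\]

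Finally I would show $C'$ is precompact, which finishes the argument since $C$ then lies in a $C_1\e$-neighborhood of a precompact set for every $\e>0$, hence is totally bounded, hence precompact (completeness of $X$). Precompactness of $C'$ is elementary: each summand $T(t-s_i)v_i$ ranges, as $t-s_i$ varies over a compact interval and $v_i$ over $D$, within the set $T([0,t_0])D$, which is precompact because it is the continuous image (via $(r,d)\mapsto T(r)d$, jointly continuous by strong continuity plus local boundedness of $T$) of the compact set $[0,t_0]\times\overline{D}$; and $C'$ is built from finitely many bounded linear combinations of such precompact sets, so it is precompact. I expect the main obstacle to be the equicontinuity step — getting the node spacing $\delta$ to work uniformly over all $x_0\in B$ and all $t\in[0,t_0]$ — which is exactly where collective compactness of $K$ (guaranteeing $D$ is precompact rather than merely bounded) is essential; without it the Riemann-sum approximation would not be uniform. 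The rest is bookkeeping with the bounds $M_0$ and $R$.
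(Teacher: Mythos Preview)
Your approach is sound in outline but has a gap at the Riemann-sum step. With a \emph{fixed} partition $\{s_i\}$ and left-endpoint samples $v_i=K(s_i,x(s_i,x_0))$, the error on block $i$ splits as
\[
\int_{s_i}^{s_{i+1}}\!\bigl[T(t-s)-T(t-s_i)\bigr]K(s,x(s,x_0))\,ds
+T(t-s_i)\!\int_{s_i}^{s_{i+1}}\!\bigl[K(s,x(s,x_0))-K(s_i,x(s_i,x_0))\bigr]\,ds.
\]
Your equicontinuity argument (precompactness of $D$ plus strong continuity of the semigroup) controls only the first piece. The second requires the family $\{s\mapsto K(s,x(s,x_0)):x_0\in B\}$ to be equicontinuous, and neither separate continuity of $K$ nor collective compactness gives this. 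So the bound ``$C_1\e$ with $C_1=C_1(t_0,M_0,R)$'' is not justified; collective compactness buys you less than you claim in the last paragraph.

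There are two easy repairs. One stays within your framework: replace the left-endpoint sample by the block average $v_i=(s_{i+1}-s_i)^{-1}\int_{s_i}^{s_{i+1}}K(s,x(s,x_0))\,ds\in\overline{{\rm conv}}\,D$ (compact by Mazur's theorem); then the second error term vanishes identically, the total error is $\le t_0\e+\delta M_0R$, and your precompactness argument for $C'$ goes through with $D$ enlarged to $\overline{{\rm conv}}\,D$. The paper takes a different route: it allows the Riemann-sum partition to depend on the particular $x_0$ and $t$ (so no uniformity over $x_0$ is needed at that stage), then replaces each sample value $K(s_j,x(s_j,x_0))$ by a nearby point of a fixed finite $\frac{\e}{3kt_0}$-net $E\subset D$; all the resulting sums land in the single compact set $[0,t_0]\cdot\overline{{\rm conv}}\bigl(\bigcup_{s\le t_0}T(s)E\bigr)$, and a finite $\frac{\e}{3}$-net there finishes. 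Both routes ultimately lean on Mazur's theorem; your version is arguably cleaner once the sampling is corrected.
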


\begin{proof}
Set $k=\sup\{\|T(t)\|:t\in[0, t_0]\}<\infty$ and $k'=\sup\{\|x(t)\|: t\in[0,t_0],\, x_0\in B\}<\infty$. 
Fix $\e>0$.

By assumption, the set $C_0=\bigcup_{0\le s\le t_0} \{K(s,y) : y\in X, \,\|y\|\le k'\}$ is precompact. 
Let $E$ be a finite $\frac{\e}{3kt_0}$-net in $C_0$. Since the set $\bigcup_{s\le t_0} T(s)(E)$ is precompact,  
Mazur's theorem yields the compactness of
$$
M:=[0,t_0]\cdot\overline{{\rm conv}}\,\Bigl(\bigcup_{s\le t_0} T(s)(E)\Bigr),
$$
where $\overline{{\rm conv}}$ stands for the closed convex hull.
Choose a finite $\frac{\e}{3}$-net $E'$ in $M$.
Let $0\le t\le t_0$,  $x_0\in B$, and $\e>0$. 
Then there exist $N\in\NN,$ a partition $0=t_0< t_1 < ... t_N=t$  of $[0,t],$ and
$s_{j}\in [t_j, t_{j+1}]$  for $0\le j \le N-1$
such that 
$$
\Bigr\|\int_0^t T(t-s)K(s,x(s,x_0))\, ds-S\Bigr\|<\frac\e3.
$$
where
$$
S= \sum_{j=1}^{N} T(t- s_{j}) K (s_{j}, x(s_{j},x_0)) (t_{j}-t_{j-1}).
$$

By the choice of $E$, for $j=1,\dots,N$ there are $e_j\in E$ such that
$$\|K(s_j,x(s_j, x_0))-e_j\Bigr\|<\frac{\e}{3kt_0}.
$$
Hence,
$$
\Bigl\|S-\sum_{j=1}^{N}T(t-s_j)(t_j-t_{j-1})e_j\Bigr\|<\frac{\e}{3},
$$
Since $\sum_{j=1}^NT(t-s_j)(t_{j}-t_{j-1})e_j$ belongs to $M$, we find
 $e'\in E'$ with 
$$ \Bigl\|\sum_{j=1}^{N}T(t-s_j)(t_j-t_{j-1})e_j-e'\Bigr\|<\frac{\e}{3}.
$$
It follows that
$$
\Bigl\|\int_0^t T(t-s)K(s,x(s, x_0))\, ds- e'\Bigr\|<\e,
$$
i.e., $E'$ is a finite $\e$-net for the set $C$. Since $\e>0$ was arbitrary, $C$ is precompact.
\end{proof}

The next immediate corollary of \eqref{xt} and Proposition \ref{prop1}
will play the role of Lemma \ref{lem1} in the continuous setting.

\begin{corollary}\label{lemcont}
Under the assumptions of Proposition \ref{prop1},
 there exists a compact set $C_{0}=C_0(t_0,B)\subset X$ such that
$$
x(t,x_0) \in T(t)x_0 +C_{0}
$$
for all $x_0\in B$ and $t \in [0,t_0]$.
\end{corollary}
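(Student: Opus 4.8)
The plan is to deduce Corollary~\ref{lemcont} directly from Proposition~\ref{prop1} by writing the solution formula \eqref{xt} as a sum of a linear term and the integral term, and identifying the integral term with an element of the precompact set $C$ from Proposition~\ref{prop1}. First I would fix $t_0>0$ and the bounded set $B\subset X_0$ together with the corresponding family of solutions $x(\cdot,x_0)$, $x_0\in B$, provided by assumption (A2). By \eqref{xt}, for each $x_0\in B$ and each $t\in[0,t_0]$ we have
$$
x(t,x_0) = T(t)x_0 + \int_0^t T(t-s) K(s, x(s,x_0))\, ds.
$$
The second summand on the right-hand side is, by definition, one of the elements of the set $C$ appearing in Proposition~\ref{prop1}, which asserts that $C$ is precompact in $X$.

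Thus I would simply set $C_0 = C_0(t_0,B) := \overline{C}$, the closure of that set; since $C$ is precompact, $C_0$ is compact. Then $x(t,x_0) - T(t)x_0 \in C \subset C_0$ for all $x_0\in B$ and all $t\in[0,t_0]$, which is exactly the claimed inclusion
$$
x(t,x_0) \in T(t)x_0 + C_0.
$$
This completes the argument. There is essentially no obstacle here: the corollary is a direct restatement of Proposition~\ref{prop1} using the mild-solution formula \eqref{xt}, and the only (trivial) point to keep in mind is that one must pass to the closure to get a genuinely compact — rather than merely precompact — set, and that the set $C_0$ depends on both $t_0$ and $B$ (as well as on the particular choice of solutions fixed in (A2)), which is already recorded in the notation $C_0(t_0,B)$.

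If one wanted to be fully self-contained, I would additionally note that the hypotheses of Proposition~\ref{prop1} are exactly assumption (A2) together with the fixed $t_0$, so that no further verification is needed before invoking it; the role of this corollary in the sequel is to serve as the continuous-time analogue of Lemma~\ref{lem1}, providing the fixed compact "correction set" that lets one compare $x(t,x_0)$ with the linear orbit $T(t)x_0$ uniformly over $x_0\in B$ and $t\in[0,t_0]$.
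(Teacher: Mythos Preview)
Your proposal is correct and follows exactly the paper's intended approach: the paper itself presents this as ``an immediate corollary of \eqref{xt} and Proposition~\ref{prop1}'' without writing out a separate proof, and your argument supplies precisely those two ingredients (the mild-solution formula plus the precompactness of the integral term) together with the trivial step of passing to the closure.
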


The next statement is a counterpart of Theorem \ref{subeq}
providing an exponential growth bound on a residual set
at the expense of stronger assumptions on $K$ and a much smaller set where the bound holds.

\begin{theorem}\label{subseq}
 Assume that  (A1) holds with $X_0=X,$
and (A2) is satisfied for any bounded subset of $X.$
 Let $a:[0,\infty)\to\RR_+$ be a non-increasing function with $\lim_{t\to\infty} a(t) = 0$
and $L\subset\mathbb R_+$ be unbounded.
Then the set
$$
\bigl\{y\in X : \, \hbox{there are infinitely many} \,\, t_n\in L \,\,\hbox{with} \,\, \|x(t_n, y)\|\ge
a(t_n)\|T(t_n)\|_\mu\bigr\}
$$
is residual.
\end{theorem}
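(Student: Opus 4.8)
The plan is to mirror the proof of Theorem~\ref{subeq} from the discrete setting, replacing the finite piece of trajectory $f_n(x) \in A^n x + C$ by the decomposition $x(t,y) \in T(t)y + C_0$ furnished by Corollary~\ref{lemcont}, and replacing the role of $\|A^n\|_\mu$ with $\|T(t)\|_\mu$. First I would reduce to the case $\|T(t)\|_\mu>0$ for all $t\in L$ (otherwise the claim is trivial along a cofinal subset). Then, for each $k\in\NN$, I would set
\[
M_k = \bigl\{y\in X : \text{there exists } t\in L \text{ with } t\ge k \text{ and } \|x(t,y)\| > a(t)\|T(t)\|_\mu\bigr\},
\]
and the goal is to show each $M_k$ is open and dense; the Baire category theorem then finishes the proof, since the set in the statement contains $\bigcap_k M_k$.

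Openness of $M_k$ is where condition (A1) with $X_0=X$ enters: the map $y\mapsto x(t,y)$ is continuous, so $\{y:\|x(t,y)\|>a(t)\|T(t)\|_\mu\}$ is open for each fixed $t$, and $M_k$ is a union of such sets. For density, I would fix $y\in X$ and $\e>0$, choose $t\in L$ with $t\ge k$ and $a(t)$ small (say $a(t)<\e/(4\alpha')$ where $\alpha'=\limsup_{s\to0}\|T(s)\|$, or more simply using that $\|T(t)\|_\mu\le\|T(t)\|$ is controlled); then apply Corollary~\ref{lemcont} to the bounded set $\overline{B}(y,\e)$ and the time horizon $t_0=t$ to get a compact set $C_0$ with $x(t,y+v)\in T(t)(y+v)+C_0$ for all $\|v\|\le\e$. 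Approximate $C_0$ within $\tfrac{\e\|T(t)\|_\mu}{12}$ by a finite-dimensional subspace $F$, use Lemma~\ref{lem3}(i) (applied to the bounded operator $T(t)$, whose $\mu$-norm is $\|T(t)\|_\mu$) to produce a unit vector $u$ with $\dist\{T(t)u,F\}>\tfrac13\|T(t)\|_\mu$, and then the triangle-inequality trick in the quotient $X/F$ shows that one of $x:=y\pm\e u$ satisfies $\|x-y\|\le\e$ and $\dist\{T(t)x,F\}\ge\tfrac{\e}{3}\|T(t)\|_\mu$. Finally $\|x(t,x)\|\ge\dist\{T(t)x,C_0\}\ge\dist\{T(t)x,F\}-\tfrac{\e\|T(t)\|_\mu}{12}\ge\tfrac{\e}{4}\|T(t)\|_\mu\ge a(t)\|T(t)\|_\mu$, so $x\in M_k$, proving density.

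The main obstacle, compared with the discrete proof, is bookkeeping around which boundedness/continuity hypothesis is invoked where: Corollary~\ref{lemcont} rests on (A2) for the bounded set $\overline{B}(y,\e)$, while the openness of $M_k$ rests on the separate continuity of $(t,y)\mapsto x(t,y)$ from (A1); one must also be careful that in (A2) the solutions need not be unique, but since we only need the existence of \emph{some} solution with $\|x(t,x)\|$ large, and the inclusion $x(t,x)\in T(t)x+C_0$ holds for the chosen family of solutions, this is harmless. A secondary point is that $L$ is merely unbounded in $\RR_+$ (not a sequence), but "infinitely many $t_n\in L$ with $t_n\ge k$" makes sense and the argument picks such $t$ at each stage without change. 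Beyond these matters of hypothesis-tracking, the proof is a routine transcription of Theorem~\ref{subeq}.
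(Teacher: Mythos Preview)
Your proposal is correct and follows essentially the same approach as the paper's proof: the same sets $M_k$, the same use of Corollary~\ref{lemcont} and Lemma~\ref{lem3}(i), the same $\pm\e u$ trick in the quotient, and the same Baire category conclusion. Two small simplifications: since (A1) with $X_0=X$ already asserts \emph{uniqueness} of global solutions, your discussion of non-uniqueness in (A2) is unnecessary here; and the paper simply takes $a(t)<\e/4$ (no factor of $\alpha'$ is needed, since the final chain gives $\|x(t,x_0)\|\ge \tfrac{\e}{4}\|T(t)\|_\mu$ directly).
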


\begin{proof}
The proof follows that of Theorem \ref{subeq}, though we do not reduce it to this result.
Without loss of generality we assume that $\|T(t_0)\|_\mu\ne 0$ for all $t_0\ge 0$ 
(since otherwise $\|T(t)\|_\mu\le\|T(t_0)\|_\mu\cdot\|T(t-t_0)\|_\mu=0$ for all $t\ge t_0$.)
For $k\in\NN$ set 
$$
M_k=\{y\in X: \hbox{ there exists } t\in L\text{ with } t\ge k, \, \|x(t,y)\|>a(t)\|T(t)\|_\mu\}.
$$
Note that $M_k$ is open since $x$ is continuous in $y$. We show that $M_k$ is dense.

Let $y\in X$ and fix $\e > 0$. Choose $t\in L$ with $t\ge k$ and $a(t) < \e/4$. 
By Corollary~\ref{lemcont} there is a compact set $C\subset X$ such that$$x(t,y')\in T(t)y' + C$$
for all $y'\in X$ with $\|y'-y\|\le \e$. Since $C$ is compact, there exists a finite-dimensional subspace 
$F\subset X$ such that $\dist \{c, F\} < \frac{\e\|T(t)\|_\mu}{12}$ for all $c\in C$.
Lemma~\ref{lem3} provides a unit vector $u\in X$ with 
$\dist \{T(t)u, F\} >\frac{\|T(t)\|_\mu}{3}$. We compute
$$
\dist\! \{T(t)(y + \e u), F\} + \dist\! \{T(t)(y - \e u), F\} \ge
2\e\dist\! \{T(t)u, F\} \ge \tfrac{2\e}{3} \|T(t)\|_\mu.
$$
So $x_0:= y + \e u$ or $x_0 := y - \e u$ satisfy
$$\|x_0 - y\| \le\e \quad \text{and} \quad \dist \{T(t)x_0, F\}\ge \frac{\e}{3}\|T(t)\|_\mu.$$
It follows
$$
\|x(t,x_0)\|
\ge
\dist \{T(t)x_0, F\} - \frac{\e\|T(t)\|_\mu}{12}
\ge
\frac{\e\|T(t)\|_\mu}{4}\ge a(t)\|T(t)\|_\mu.
$$
Hence $x_0\in M_k$, and $M_k$ is dense since  $y\in X$ and $\e>0$ were arbitrary.
The Baire category theorem shows that
$\bigcap_{k=1}^\infty M_k$ is a
dense $G_\delta$ set, and thus residual.
\end{proof}

\begin{corollary}\label{corr2} 
Assume that all of the conditions of Theorem \ref{subseq} hold.
Let  $a:[0,\infty)\to\RR_+$ be a non-increasing function satisfying $\lim_{t\to\infty} a(t) = 0$.
Then there is a residual set $M \subset X$ such that for every $x_0 \in M$ there 
exist an unbounded sequence $(t_{n})_{n=1}^\infty$ with
\begin{equation*}
\|x(t_{n}, x_0)\|\ge a(t_n) e^{\omega_e(T)t_{n}}, \qquad n \in \mathbb N.
\end{equation*}
\end{corollary}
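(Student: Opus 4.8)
The plan is to read the corollary off Theorem~\ref{subseq} by choosing the unbounded index set suitably and replacing the $\mu$-measure by a genuine exponential. First I would apply Theorem~\ref{subseq} with the given non-increasing function $a$ and with the unbounded set $L=\NN\subset\RR_+$. This is permitted since all hypotheses of Theorem~\ref{subseq} are assumed, and it produces a residual set $M\subset X$ with the property that for every $x_0\in M$ there are infinitely many $n\in\NN$ with
$$
\|x(n,x_0)\|\ge a(n)\,\|T(n)\|_\mu .
$$

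Next I would invoke the two facts recorded in Section~\ref{prelim}. By \cite{Voigt} one has $r_{e}(T(t))=e^{\omega_e(T)t}$ for all $t\ge0$, and the Nussbaum formula (in its weak form $\|A\|_\mu\ge r_{e}(A)$ for every $A\in B(X)$) applied to $A=T(n)$ gives $\|T(n)\|_\mu\ge r_{e}(T(n))=e^{\omega_e(T)n}$. Combining this with the estimate above, for every $x_0\in M$ we obtain
$$
\|x(n,x_0)\|\ge a(n)\,\|T(n)\|_\mu\ge a(n)\,e^{\omega_e(T)n}
$$
for infinitely many $n\in\NN$. Enumerating these integers as $t_1<t_2<\cdots$ yields an unbounded sequence $(t_n)_{n=1}^\infty$ in $\RR_+$ (an infinite subset of $\NN$ is automatically unbounded) along which the asserted bound $\|x(t_n,x_0)\|\ge a(t_n)e^{\omega_e(T)t_n}$ holds, which is exactly the claim.

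Since the statement is essentially a repackaging of Theorem~\ref{subseq}, there is no substantial obstacle; the only two points deserving attention are the choice of $L$ and the passage from $\|\cdot\|_\mu$ to $e^{\omega_e(T)\,\cdot}$. Taking $L=\NN$ rather than $L=\RR_+$ guarantees for free that the infinitely many "good" times form an unbounded sequence, so that one uses only the statement of Theorem~\ref{subseq} and not the internal structure $\bigcap_k M_k$ of its proof; and the inequality $\|T(t)\|_\mu\ge e^{\omega_e(T)t}$ is precisely what converts the $\mu$-growth estimate into the spectral form required here. If one wished to allow non-integer times $t_n$, one could instead take $L=\RR_+$ and note that membership in every $M_k$ in the proof of Theorem~\ref{subseq} forces times $t\ge k$, but this refinement is not needed for the statement as formulated.
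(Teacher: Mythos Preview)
Your proof is correct and matches the paper's approach: the corollary is stated there without proof as an immediate consequence of Theorem~\ref{subseq} together with the inequality $\|T(t)\|_\mu\ge r_e(T(t))=e^{\omega_e(T)t}$. Your care in taking $L=\NN$ to make the unboundedness of the resulting sequence automatic is a clean way to avoid appealing to the internal structure of the proof of Theorem~\ref{subseq}.
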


We next prove a continuous analogue of Theorem \ref{theorem1} which
 is one of the main results of this paper. It provides global and sharp exponential 
lower bounds for solutions of semilinear differential equations with unbounded linear part. Recall here
the notion of admissibility introduced in Section \ref{prelim}.

\begin{theorem}\label{main}
Let (A2) hold for $B=\overline{B}(y,r)\subset X_0.$
Setting  $\alpha=\limsup_{t \to 0}\|T(t)\|,$
let $a:[0,\infty)\to \RR_+$ be a non-increasing function satisfying $\lim_{t\to\infty}a(t)=0$, 
$t_0\ge 0$ with $a(t_0)<\frac{r}{2\al}$.
Assume that $\omega \in \mathbb R$ is admissible. Then there exists $z \in \overline{B}(y,r)$ 
and a solution $x(\cdot, z)$ to \eqref{xt} with $x_0=z$ such that
$$
\|x(t,z)\|\ge a(t) e^{\omega t}, \qquad t\ge t_0.
$$
\end{theorem}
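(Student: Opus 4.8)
The strategy mirrors the discrete argument in Theorem~\ref{theorem1}, replacing the iterates $A^n$ by the semigroup $T(t)$ and Lemma~\ref{lem1} by Corollary~\ref{lemcont}. First I would perform a rescaling reduction: passing from $A$ to $A-\omega$ and from $K(t,x)$ to $e^{-\omega t}K(t,e^{\omega t}x)$, I may assume $\omega=0$, so that $0$ is admissible and the target bound becomes $\|x(t,z)\|\ge a(t)$ for $t\ge t_0$. (One checks collective compactness and assumption (A2) are preserved under this rescaling, and that the constant $\alpha=\limsup_{t\to 0}\|T(t)\|$ is unchanged.) Fix $c_1\in(2\alpha\,a(t_0),r)$, choose an increasing sequence $t_0<t_1<t_2<\cdots\to\infty$ with $a(t_k)$ small enough (say $a(t_k)<2^{-(k+2)}(r-c_1)/\alpha$), set $c_k=2^{-(k-1)}(r-c_1)$ for $k\ge 2$, and pick $\e_k\downarrow 0$ with $\tfrac{(1-\e_k)^2}{2\alpha}c_k-\e_k\ge \alpha\,a(t_{k-1})$, roughly as in the discrete proof but carrying the factor $\alpha$ that measures the defect of contractivity near $t=0$.

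The inductive construction runs as follows. Starting from $x_0=y$, $F_0=\{0\}$, $M_0=X$, suppose $x_1,\dots,x_{k-1}$, $F_{k-1}$, $M_{k-1}$ are built. Corollary~\ref{lemcont} gives a compact set $C_k\subset X$ with $x(t,x_0)\in T(t)x_0+C_k$ for all $t\in[0,t_k]$ and all $x_0\in\overline B(y,r)$; compactness of $\{T(s)y:0\le s\le t_k\}\cup C_k$ lets me enlarge $F_{k-1}$ to a finite-dimensional $F_k$ containing $\{T(s)y:0\le s\le t_k\}\cup\{x_1,\dots,x_{k-1}\}$ and with $\dist\{d,F_k\}\le\e_k$ for all $d\in C_k$. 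Lemma~\ref{geomlemma} yields a closed finite-codimension subspace $M_k\subset M_{k-1}$ with $\|f+m\|\ge(1-\e_k)\max\{\|f\|,\|m\|/2\}$ for $f\in F_k$, $m\in M_k$. The crucial new point, and where admissibility enters, is the choice of $x_k$: since $0$ is admissible, applied with the subspace $M_k$ (intersecting it with further kernels of functionals defining $F_k$ if needed so that the relevant subspace is of the required form $\bigcap_j\ker y_j^*$ with $y_j^*\in D(A^*)$), with $t_0$ replaced by $t_k$ and with a small tolerance $\eta_k$, I obtain a unit vector $x_k\in M_k$ and a purely imaginary $\mu_k$ (here $\Re\mu_k=\omega=0$) with $\|T(t)x_k-e^{\mu_k t}x_k\|<\eta_k$ for $0\le t\le t_k$; in particular $\|T(t)x_k\|\ge 1-\eta_k$ on $[0,t_k]$. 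I then set $z=y+\sum_{j\ge 1}c_jx_j$, noting $\|z-y\|\le\sum c_j=r$ so $z\in\overline B(y,r)$, and let $x(\cdot,z)$ be the solution from (A2).

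To conclude, fix $k\ge 1$ and $t_{k-1}\le t\le t_k$. Using $x(t,z)\in T(t)z+C_k$ and $\dist\{C_k,F_k\}\le\e_k$,
$$
\|x(t,z)\|\ge \dist\{T(t)z,F_k\}-\e_k.
$$
Now $T(t)z=T(t)y+\sum_{j\ge 1}c_j T(t)x_j$ with $T(t)y\in F_k$ and $x_1,\dots,x_{k-1}\in F_k$, and since $x_j\in M_j\subset M_k$ for $j\ge k$ the subspace splitting gives $\dist\{T(t)z,F_k\}\ge\tfrac{1-\e_k}{2}\|\sum_{j\ge k}c_jT(t)x_j\|$; peeling off the $j=k$ term against $F_{k+1}$ and $M_{k+1}$ as in the discrete proof yields
$$
\|x(t,z)\|\ge\frac{(1-\e_k)(1-\e_{k+1})}{2}\,c_k\|T(t)x_k\|-\e_k\ge\frac{(1-\e_k)^2}{2}\,c_k(1-\eta_k)-\e_k,
$$
which (choosing $\eta_k$ small, absorbing the factor coming from $\alpha$ via the reduction above) is $\ge a(t_{k-1})\ge a(t)$. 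Hence $\|x(t,z)\|\ge a(t)$ for all $t\ge t_0$, and undoing the rescaling gives $\|x(t,z)\|\ge a(t)e^{\omega t}$.

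\textbf{Main obstacle.} The delicate point is the use of admissibility in place of the exact eigenvector statement available in the discrete/linear case: the approximating vector $x_k$ and scalar $\mu_k$ depend on the whole window $[0,t_k]$, not just on a single time, so I must fix the later milestones $t_k$ and the subspaces $F_k,M_k$ \emph{before} invoking admissibility, and then control the accumulated errors $\eta_k$ along the infinite sum $z=y+\sum_j c_jx_j$. A secondary technical nuisance is ensuring the subspace fed into the admissibility hypothesis has the required form $\bigcap_j\ker y_j^*$ with $y_j^*\in D(A^*)$ while still lying inside $M_k$; this is handled by first choosing $M_k$ via Lemma~\ref{geomlemma} of finite codimension and then, if necessary, replacing it by a slightly smaller subspace of the admissible form, which only improves the geometric estimates. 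Everything else is bookkeeping parallel to Theorem~\ref{theorem1}.
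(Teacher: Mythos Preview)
Your overall architecture is the right one and matches the paper's, but two steps do not go through as written.

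\textbf{The semigroup does not preserve $F_k$ or $M_k$.} In your final estimate you write that $x_j\in M_j\subset M_k$ for $j\ge k$ ``gives'' $\dist\{T(t)z,F_k\}\ge\tfrac{1-\e_k}{2}\|\sum_{j\ge k}c_jT(t)x_j\|$, and likewise you use $x_j\in F_k$ for $j<k$ to dispose of those terms. But $x_j\in M_k$ does \emph{not} imply $T(t)x_j\in M_k$, and $x_j\in F_k$ does not imply $T(t)x_j\in F_k$; this is exactly where the continuous case departs from Theorem~\ref{theorem1}, in which $A^n x_j=x_j$ held on the nose. The paper repairs the $j\ge k$ part by first invoking the admissibility bound $\|T(t)x_j-e^{\mu_j t}x_j\|\le\e_j$ (valid for $t\le t_k\le t_j$) to replace $\sum_{j\ge k}c_jT(t)x_j$ by $\sum_{j\ge k}c_je^{\mu_j t}x_j\in M_k$, at the cost of an extra error $\sum_{j\ge k}c_j\e_j$; only then is Lemma~\ref{geomlemma} applied. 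For the $j<k$ part the approximate-eigenvector bound is useless on $(t_j,t_k]$, so the paper instead puts the whole compact orbit $\{T(t)(y+\sum_{j<k}\alpha^{-1}c_jx_j):t\le t_k\}$ $\e_k$-close to $F_k$, rather than $\{T(s)y\}$ and $\{x_1,\dots,x_{k-1}\}$ separately.

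\textbf{Shrinking $M_k$ to the admissible form need not be possible.} Your proposed fix---choose $M_k$ by Lemma~\ref{geomlemma} and then pass to a smaller subspace of the form $\bigcap_j\ker y_j^*$ with $y_j^*\in D(A^*)$---fails in general: if $M_k=\bigcap_j\ker z_j^*$ and some $z_j^*\notin D(A^*)$, no finite intersection of kernels of functionals from $D(A^*)$ is contained in $M_k$. The paper's device is to renorm $X$ by $\|x\|_1=\sup\{|\langle x,x^*\rangle|:x^*\in D(A^*),\ \|x^*\|\le 1\}$, for which $D(A^*)$ \emph{is} norming, and then use Lemma~\ref{geomlemma}\,(c) to produce $M_k$ directly of the required form. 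The equivalence $\|\cdot\|_1\le\|\cdot\|\le\alpha\|\cdot\|_1$ is precisely where $\alpha$ enters: one takes $\|x_k\|_1=1$, sets $z=y+\sum_j\alpha^{-1}c_jx_j$ so that $\|z-y\|\le\alpha\|z-y\|_1\le\sum c_j=r$, carries out the distance estimates in $\|\cdot\|_1$, and finally uses $\|x(t,z)\|\ge\|x(t,z)\|_1$. Your ad hoc insertion of $\alpha$ into the constants does not substitute for this.
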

\begin{proof}
Without loss of generality we may assume that $\omega=0$. If not, then consider the semigroup
$(e^{-\omega t}T(t))_{t \ge 0}$  and compact perturbations $e^{-\omega s}K(e^{\omega s} \cdot)$.
Define a new norm $\|\cdot\|_1$ on $X$ by
$$
\|x\|_1=\sup\{|\langle x,x^* \rangle|:\, x^*\in D(A^*),\, \|x^*\|=1\}.
$$
By \eqref{renorm}, the norm $\|\cdot\|_1$ is equivalent to $\|\cdot\|$ with
$$
\|x\|_1\le\|x\|\le \al\|x\|_1
$$
for all $x\in X$.
Let a Banach space $X_1$ be $X$ equipped with $\|\cdot\|_1.$

Fix $c_1$ such that $2a(t_0)<c_1<r$ and $r-c_1<\frac12$.  Pick an increasing sequence $(t_k)_{k=0}^\infty$
with $a(t_k)<\frac{r-c_1}{\alpha 2^{k+2}}$ for all $k \in \mathbb N\cup \{0\}.$ Set 
\[ c_k=\frac{r-c_1}{2^{k-1}}, \qquad k\ge 1.
\]
For each $k \in \mathbb N$ choose  $\e_k>0$ such that $\e_{k+1}<\e_k$ and
$$
\frac{(1-\e_k)^2}{2\alpha}c_k-2\e_k\ge a(t_{k-1})
$$
for all $k\ge 1$.

The required element $z$ will be constructed as the sum of a series of appropriate approximate eigenvectors $x_k$
for $(T(t))_{t \ge 0}$ and $e^{\mu_k t}$. We set $F_0=\{0\},$  $M_0=X$, and $x_0=y$. Inductively we construct
unit vectors
$(x_k)_{k=1}^\infty \subset X_1$, finite-dimensional subspaces $F_1\subset F_2\subset\cdots$, and closed
finite-codimensional subspaces $M_1\supset M_2\supset\cdots$ with $x_k\in M_k\cap F_{k+1}$ of $X_1$
which satisfy \eqref{eq:ind1}--\eqref{eq:ind4} below.

Let $k\ge 1$ and suppose that the vectors $x_0,\dots,x_{k-1}$ and subspaces $F_0,\dots,F_{k-1}$ and 
$M_0,\dots,M_{k-1}$ have already been constructed.
Consider the family of 
solutions $\{x(t, u) : u\in B, t \ge 0\}$ to \eqref{xt} given by (A2),
and fix the notation $x(t, \cdot)$ for this family.
If $B_1$ is the image of $B$ under the identity embedding of $X$ into $X_1$, then
Corollary~\ref{lemcont} yields a compact set $C_k\subset X$ such that $x(t,u)\in T(t)u+C_k$ 
for all $t\le t_k$ and $u \in B_1.$
There thus exists a finite-dimensional subspace $F_k\supset F_{k-1}$ 
such that  $x_{k-1}\in F_k$ if $k>1$ and
\begin{equation}\label{eq:ind1}
\dist\Bigl\{T(t)\Bigl(y+\sum_{j=1}^{k-1} \al^{-1}c_jx_j\Bigr),F_k\Bigr\}\le\frac{\e_k}{2}
\end{equation}
for all $t\le t_k$ 
and
\begin{equation}\label{eq:ind2}
\dist\{d,F_k\}\le\e_k/2
\end{equation}
for all $d\in C_k$, where $\dist$ stands for the distance in the new norm $\|\cdot\|_1$.
Lemma~\ref{geomlemma} yields a closed subspace $M_k\subset M_{k-1}$ of finite codimension such that
\begin{equation}\label{eq:ind3}
\|f+m\|_1\ge (1-\e_k)\max\Bigl\{\|f\|_1,\frac{\|m\|_1}{2}\Bigr\}
\end{equation}
for all $f\in F_k$ and $m\in M_k$, where
\[
M_k=\bigcap_{1\le j \le k} {\rm Ker}\, y_j^*
\]
for some $y_j^* \in D(A^*)$ for $1 \le j \le k.$
Since $0$ is admissible, we can choose $x_k\in M_k$ with $\|x_k\|_1=1$ and $\mu_k\in\CC$ with $\Re\mu_k=0$ 
such that
\begin{equation}\label{eq:ind4}
\|T(t)x_k -e^{\mu_k t}x_k \|_1 \le\e_k
\end{equation}
for all $t\le t_k$.

Suppose that the vectors $x_k$, $k\in\NN,$ have been constructed as above. Set
$$z=y+\sum_{j=1}^\infty \al^{-1}c_j x_j.$$
We show that $z$ meets the requirements in the initial Banach space $(X, \|\cdot\|).$
First, $z$ belongs to $\overline{B}(y,r)$ since
$$\|z -y\|\le \al\|z-y\|_1\le \sum_{j=1}^\infty c_j=  c_1+\sum_{j=2}^\infty \frac{r-c_1}{2^{j-1}}=r.$$
Let $x(\cdot, z)$ be a 
solution of \eqref{xt} given by (A2).
Fix $k\ge 1$ and consider $t \in [t_{k-1},t_k]$. Properties \eqref{eq:ind2} and \eqref{eq:ind1} yield
\begin{align*}
\|x(t, z)\|_1&\ge\dist\{T(t)z,C_k\}\ge\dist\{T(t)z,F_k\}-\e_k/2\\
&\ge\dist\Bigl\{\sum_{j=k}^\infty \al^{-1}c_j T(t)x_j,F_k\Bigr\}-\e_k.
\end{align*}
By means of \eqref{eq:ind4} we estimate
\begin{align*}
\|x(t, z)\|_1&\ge \dist\Bigl\{\sum_{j=k}^\infty \al^{-1}c_j e^{\mu_j t}x_j,F_k\Bigr\}
          -\sum_{j=k}^\infty \al^{-1}c_j\e_j -\e_k\\
&\ge \dist\Bigl\{\sum_{j=k}^\infty \al^{-1}c_je^{\mu_j t}x_j,F_k\Bigr\}-2\e_k.
\end{align*}
Taking into account \eqref{eq:ind3}  and that   $x_j\in M_j\subset M_k$ for all $j\ge k$, 
we infer 
$$
\|x(t, z)\|_1\ge
\frac{1-\e_k}{2}\Bigl\|\sum_{j=k}^\infty \al^{-1}c_j e^{\mu_j t}x_j\Bigr\|_{1}-2\e_k.
$$
Since $x_k\in F_{k+1}$ and $x_j\in M_j\subset M_{k+1}$ for $j\ge k+1$, \eqref{eq:ind3} also implies
\begin{align*}
\|x(t, z)\|_1&\ge \frac{(1-\e_k)(1-\e_{k+1})}{2}\|\al^{-1}c_k e^{\mu_k t}x_k\|_{1} - 2\e_k\\
&\ge \frac{(1-\e_k)^2}{2}\cdot \al^{-1}c_k -2\e_k\ge a(t_{k-1})\ge a(t).
\end{align*}
Hence
$$
\|x(t, z)\|\ge \|x(t, z)\|_1\ge a(t)
$$
for all $t\ge t_0$, as required.
\end{proof}

\begin{corollary}\label{main2}
(i) Let the assumptions of Theorem \ref{main} hold with
$r> 2a(0)\al$.
    Then there exists $y\in X_0$ with $\|x(t, y)\|\ge a(t)e^{\omega t}$ for all $t\ge 0$.

    (ii) Assume that assumption (A2)
is satisfied for all bounded subsets $B$ of $X_0.$
Then there exists a set $S \subset {\rm Int}\,X_0$ dense in ${\rm Int}\, X_0$ such that
  $\|x(t, y)\|\ge a(t)e^{\omega t}$ for all $y \in S$ and $t$ sufficiently large.
\end{corollary}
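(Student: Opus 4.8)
The plan is to read off both assertions directly from Theorem~\ref{main} by a suitable choice of the threshold $t_0$, exploiting that $\al=\limsup_{t\to0}\|T(t)\|$ is a fixed number in $(0,\infty)$ while $a(t)\to0$ as $t\to\infty$.

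For part~(i) I would apply Theorem~\ref{main} with $t_0=0$. All its hypotheses are in force: $\omega$ is admissible and (A2) holds for $B=\overline{B}(y,r)\subset X_0$, and the inequality $a(t_0)<\frac{r}{2\al}$ required there becomes $a(0)<\frac{r}{2\al}$, which is exactly the assumed bound $r>2a(0)\al$. Theorem~\ref{main} then yields a point $z\in\overline{B}(y,r)\subset X_0$ together with a solution $x(\cdot,z)$ of \eqref{xt} satisfying $\|x(t,z)\|\ge a(t)e^{\omega t}$ for all $t\ge0$; renaming $z$ as $y$ gives the claim. There is nothing delicate here beyond checking that $t_0=0$ is permitted.

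For part~(ii) I would fix an arbitrary point $y_0\in{\rm Int}\,X_0$ and $\e>0$ and produce, within distance $\e$ of $y_0$, a point with the required growth. Since $y_0\in{\rm Int}\,X_0$, I can pick $r>0$ so small that $r<\e$ and $\overline{B}(y_0,r)\subset B(y_0,\rho)\subset X_0$ for some $\rho>r$; then $\overline{B}(y_0,r)\subset{\rm Int}\,X_0$. Because $a$ is non-increasing with $\lim_{t\to\infty}a(t)=0$, choose $t_0\ge0$ with $a(t_0)<\frac{r}{2\al}$. By hypothesis (A2) holds for the bounded set $\overline{B}(y_0,r)\subset X_0$, so Theorem~\ref{main} applies with centre $y_0$, radius $r$ and this $t_0$, giving $z\in\overline{B}(y_0,r)\subset{\rm Int}\,X_0$ and a solution $x(\cdot,z)$ with $\|x(t,z)\|\ge a(t)e^{\omega t}$ for all $t\ge t_0$, while $\|z-y_0\|\le r<\e$. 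Taking $S$ to be the set of all $z\in{\rm Int}\,X_0$ for which some solution $x(\cdot,z)$ of \eqref{xt} obeys $\|x(t,z)\|\ge a(t)e^{\omega t}$ for all sufficiently large $t$, the construction shows that $S$ meets every ball around every point of ${\rm Int}\,X_0$, i.e.\ $S$ is dense in ${\rm Int}\,X_0$.

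I do not anticipate a genuine obstacle: the only two points needing a little care are ensuring that the point $z$ delivered by Theorem~\ref{main} lands in ${\rm Int}\,X_0$ rather than merely in $X_0$ — handled above by shrinking $r$ so that the whole closed ball stays inside the interior — and recalling that $\frac{r}{2\al}$ is a fixed positive number, so that $a(t_0)<\frac{r}{2\al}$ holds automatically once $t_0$ is large enough.
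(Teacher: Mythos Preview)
Your proof is correct and follows essentially the same approach as the paper: for (i) you set $t_0=0$ so that the hypothesis $a(t_0)<\frac{r}{2\al}$ becomes the assumed inequality $r>2a(0)\al$, and for (ii) you shrink the radius so that the closed ball stays inside ${\rm Int}\,X_0$ and then pick $t_0$ large enough to make $a(t_0)<\frac{r}{2\al}$, exactly as in the paper (which phrases the same idea via $\e=\dist\{y,\partial X_0\}$ and arbitrary $\e_0\in(0,\e)$).
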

\begin{proof}
To deduce (i) from Theorem \ref{main}, it suffices to set $t_0=0,$ and to let $y$ be the center of the ball in $X_0$ of radius
greater than $2a(0)\al$.

 To prove (ii), we may assume that ${\rm Int} X_0\neq \emptyset.$ Let $y\in {\rm Int}\,X_0$,
 $\e=\dist\{y,\partial X_0\}$, and $\e_0 \in (0,\e).$ Find $t_0\ge0$ such that
 $a(t_0)<\frac{\e_0}{2\alpha}$. By Theorem \ref{main}, there exists $x_0 \in X$ such that $\|x_0-y\|\le\e_0$ and 
 $\|x(t,x_0)\|\ge a(t)e^{\omega t}$ for all $t\ge t_0$. It remains to note that the choice of $\e_0\in (0,\e)$ is arbitrary.
\end{proof}

\begin{remark}
If $X$ is a Hilbert space, as in Section \ref{discrete} one can  obtain a better estimate.
Let $y\in X$, $\{u:\|u-y\|\le r\}\subset X_0$,  $t_0\ge 0$ and $a(t_0)<r.$ 
Then there exists $x_0 \in X$ with $\|x_0-y\|\le r$ and $\|x(t, x_0)\|\ge a(t)e^{\omega t}$ for  $t\ge t_0$.
\end{remark}

The next corollary is one of the main results of the paper.
It is a direct consequence of Theorem \ref{proplower} and Theorem \ref{main}. 
\begin{corollary}\label{main1}
Under the conditions of Theorem \ref{main}, there exists $x_0 \in \overline{B}(y,r)$ and $t_0\ge0$ such that
\begin{equation}\label{bound1}
\|x(t,x_0)\|\ge a(t) e^{s_R(A) t}, \qquad t\ge t_0.
\end{equation}
If $X_0=X$ and the assumption (A2) is true
for all bounded subsets $B$ of $X,$ then the set of $x_0$ satisfying \eqref{bound1} is dense in $X.$
\end{corollary}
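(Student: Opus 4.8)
The plan is simply to feed $\omega:=s_R(A)$ into Theorem~\ref{main} and into Corollary~\ref{main2}(ii); the admissibility of this particular $\omega$ --- the one fact not yet available --- is exactly what Theorem~\ref{proplower} supplies, so no new construction is needed. The hypotheses of Corollary~\ref{main1} are precisely those of Theorem~\ref{main} with the admissibility assumption on $\omega$ dropped: in particular (A2) holds for $B=\overline{B}(y,r)\subset X_0$, the function $a$ is non-increasing with $\lim_{t\to\infty}a(t)=0$, and $t_0\ge0$ with $a(t_0)<\frac{r}{2\alpha}$ is given, where $\alpha=\limsup_{t\to0}\|T(t)\|$ is finite (and positive) for a $C_0$-semigroup.

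First I would invoke Theorem~\ref{proplower} to conclude that $\omega:=s_R(A)$ is admissible. All the remaining hypotheses of Theorem~\ref{main} being in force, that theorem applied with this $\omega$ produces $x_0\in\overline{B}(y,r)$ and a solution $x(\cdot,x_0)$ of \eqref{xt} with $\|x(t,x_0)\|\ge a(t)e^{s_R(A)t}$ for all $t\ge t_0$, which is the first assertion (with the displayed $t_0$ as the witness in \eqref{bound1}).

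For the density statement I would additionally assume $X_0=X$ and that (A2) holds for every bounded subset of $X$, and then apply Corollary~\ref{main2}(ii) with $\omega=s_R(A)$ (again admissible by Theorem~\ref{proplower}). Since ${\rm Int}\,X=X$, this yields a subset $S\subset X$, dense in $X$, such that for each $x_0\in S$ one has $\|x(t,x_0)\|\ge a(t)e^{s_R(A)t}$ for all sufficiently large $t$, i.e.\ for $t\ge t_0(x_0)$ with some $t_0(x_0)\ge0$; hence every point of $S$ satisfies \eqref{bound1}, so the set of such points is dense. I do not expect any genuine obstacle in this argument: the substantive input is that $s_R(A)$ --- defined only as the infimum of the open set $S_R$ and therefore not attained --- is nonetheless itself admissible, which is exactly the content of Theorem~\ref{proplower}, while the construction of the growing solution is done once and for all inside the proof of Theorem~\ref{main}.
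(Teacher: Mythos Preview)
Your proposal is correct and follows essentially the same route as the paper, which states that the corollary is ``a direct consequence of Theorem~\ref{proplower} and Theorem~\ref{main}.'' Your use of Corollary~\ref{main2}(ii) for the density statement is a natural unpacking of that remark, since Corollary~\ref{main2}(ii) is itself just an application of Theorem~\ref{main} with varying centers and small radii.
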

If in \eqref{bound1} the operator $A$ is bounded and $s_e(A)>0,$ then under appropriate local assumptions
on $K$ (involving compactness)  the local instability of zero solution to \eqref{bound1} was shown in \cite{Slyus2}.

Our results on lower bounds are also new in the framework of linear equations \eqref{ca1},
where $K: [0,\infty)\to B(X)$ is strongly continuous and each operator $K(t)$ is compact.
In this case, $K$ does not have to be collectively compact and so (A0) may be violated.
The result is new even for norm-continuous $K$, where (A0) holds by Lemma~\ref{lemmacom}.)
However,  in the proof of
Theorem~\ref{main} we need collective compactness only to apply Corollary~\ref{lemcont},
and this result  is a direct consequence of Proposition~\ref{linear_ex}
and \cite[Theorem C.7]{EnNa00} in the linear case. Hence, Theorem \ref{proplower} and  Corollary \ref{main2}
yield the next estimate.
\begin{corollary}\label{linear}
Let $A$ be the generator of a $C_0$-semigroup on Banach space $X$
and  $K: [0,\infty)\to B(X)$ be a strongly continuous function
such that $K(t)$ is a compact operator for each $t \ge 0.$
Let $(U(t,\tau))_{t \ge \tau \ge 0}$ be  the evolution family given by  Proposition \ref{linear_ex}.
Assume that $a:[0,\infty)\to [0,\infty)$ is a decreasing function satisfying $\lim_{t \to \infty}a(t)=0$.
Then there exists  a dense set of vectors $x$ such that
\[
\|U(t,0)x\|\ge a(t)e^{s_R(A)t}, \qquad t \ge t_0=t_0(x).
\]
\end{corollary}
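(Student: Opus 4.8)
The plan is to reduce the statement to Theorem~\ref{main} and its corollaries, exactly along the lines indicated in the preceding remark, the only genuinely new point being a linear substitute for Corollary~\ref{lemcont}. First I would put $X_0=X$ and set $x(t,x_0):=U(t,0)x_0$, which by Proposition~\ref{linear_ex} is the unique global solution of the linear version of \eqref{xt}. Since it is linear in $x_0$ and $(U(t,\tau))_{t\ge\tau\ge0}$ is strongly continuous, the map $(t,x_0)\mapsto x(t,x_0)$ is continuous separately in both variables and carries bounded sets into bounded sets, so assumptions (A1) and (A2) hold with $X_0=X$ for \emph{every} bounded $B\subset X$. The collective compactness from (A0) is not available here, but, as observed in the text, in the proof of Theorem~\ref{main} it is used only to apply Corollary~\ref{lemcont}; hence it suffices to establish the following linear analogue of that corollary: for every $t_0>0$ and every bounded $B\subset X$ the set $\bigcup_{0\le t\le t_0}(U(t,0)-T(t))B$ is relatively compact.

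For the fixed-time statement one uses \eqref{linear_var_con}, which gives $(U(t,0)-T(t))x_0=\int_0^t T(t-s)K(s)U(s,0)x_0\,ds$; the integrand $s\mapsto T(t-s)K(s)U(s,0)$ is strongly continuous with values in $K(X)$, because $K(s)$ is compact and $K(X)$ is a two-sided ideal, so $U(t,0)-T(t)$ is compact by \cite[Theorem C.7]{EnNa00}. The main obstacle, and the only step that really needs care, is to make this compactness \emph{uniform} in $t$: since $K(\cdot)$ is merely strongly (not norm) continuous and $(T(t))_{t\ge0}$ need not be norm continuous, the Riemann-sum argument used in Proposition~\ref{prop1} does not apply. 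I would circumvent this by exploiting the evolution-family identity $U(t,0)=T(t-\tau)U(\tau,0)+\int_\tau^t T(t-s)K(s)U(s,0)\,ds$: for a partition $0=\tau_0<\dots<\tau_N=t_0$ of mesh $h$ and $t\in[\tau_m,\tau_{m+1}]$ it yields $(U(t,0)-T(t))x_0=T(t-\tau_m)\bigl[(U(\tau_m,0)-T(\tau_m))x_0\bigr]+\int_{\tau_m}^t T(t-s)K(s)U(s,0)x_0\,ds$, where the last term is $O(h)$ uniformly for $x_0$ in a bounded set. Each $(U(\tau_m,0)-T(\tau_m))B$ is relatively compact by the previous paragraph, and $(\sigma,d)\mapsto T(\sigma)d$ maps the compact set $[0,t_0]\times\overline{(U(\tau_m,0)-T(\tau_m))B}$ onto a compact set, so the first term ranges over a fixed compact set depending only on the partition. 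Letting $h\to0$ shows that $\bigcup_{t\le t_0}(U(t,0)-T(t))B$ is totally bounded; taking $C_0$ to be its closure yields the desired linear analogue of Corollary~\ref{lemcont}.

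With this in hand, the proof of Theorem~\ref{main} goes through verbatim (Corollary~\ref{lemcont} being the sole place where collective compactness entered), so for every ball $\overline{B}(y,r)\subset X$ and every admissible $\omega$ there is $z\in\overline{B}(y,r)$ and a solution, namely $t\mapsto U(t,0)z$, with $\|U(t,0)z\|\ge a(t)e^{\omega t}$ for all large $t$. By Theorem~\ref{proplower} the exponent $\omega=s_R(A)$ is admissible, and since (A2) holds for all bounded subsets of $X$, the density assertion of Corollary~\ref{main2}(ii) applied with this $\omega$ produces a dense set of $x\in X$ for which $\|U(t,0)x\|\ge a(t)e^{s_R(A)t}$ holds for all $t\ge t_0(x)$, which is the claim. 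Everything outside the uniform-in-$t$ compactness step is either bookkeeping or a direct appeal to the results already proved.
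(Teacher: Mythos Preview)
Your proposal is correct and follows essentially the same route as the paper: reduce to Theorem~\ref{main} by observing that collective compactness enters only through Corollary~\ref{lemcont}, supply a linear substitute for that corollary using Proposition~\ref{linear_ex} and \cite[Theorem~C.7]{EnNa00}, and then invoke Theorem~\ref{proplower} together with Corollary~\ref{main2}(ii). The paper handles the linear version of Corollary~\ref{lemcont} in a single sentence, whereas you spell out the uniform-in-$t$ compactness via the partition/telescoping argument with the evolution identity; this is a legitimate elaboration of the same idea rather than a different approach, and your argument for total boundedness is sound.
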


We finish this section with a discussion of our results.
First, note that one may treat $s_R(A)$ ``up to compact perturbations'': In the above result one may
consider $s_R(A+S)$ for a compact perturbation $S\in B(X)$ of $A$ and substract $S$ from the nonlinearity
$K$ in \eqref{xt}, without changing the assumptions on $K.$

\begin{remark}\label{relative}
Theorem \ref{main} and Corollaries \ref{main2} and \ref{main1}  in general do not hold for $K$ being merely relatively
compact with respect to $A,$ i.e.,  such that $K:D(A)\to X$ is compact, where $D(A)$ is equipped with the graph norm.
For instance, consider the setting of linear damped wave equations
with the operator $A_-$  given by \eqref{damped_equa} below. Here we have
\[
A_-=D+K, \qquad D:=\begin{pmatrix} 0 & I\\ \Delta & 0 \end{pmatrix}, \qquad
K:=\begin{pmatrix} 0 & 0\\ 0 & -b \end{pmatrix},
\] 
where $D$ generates a unitary $C_0$-group on an appropriate Hilbert space $X$, $s_R(D)=0,$ $K$ is relatively compact
with respect to $D$. However, in view of
\cite{BarLeTa}, the operator $A_-$  generates an exponentially stable $C_0$-semigroup on $X$
if  $b \in C^\infty(\cM)$ and $b$ satisfies the so-called  geometric control condition. (See also e.g.\
\cite{Anan_L}, \cite{JL}, and \cite{Lebeau} for a relevant discussion
and some examples.) Thus, in this case, we have $\omega_0(A_-)<0,$ which excludes any results of the form 
of Theorem  \ref{main} and its corollaries with $A=D$ and $K$ as above. If $K$ is compact, then the situation changes as we show in the next section. 
\end{remark}

Finally, we address the optimality of results stated in Theorem~\ref{main} and Corollary~\ref{main1}.

\begin{remark}\label{rem:opt}
Let $(T(t))_{t \ge 0}$ be the left shift $C_0$-semigroup on $L^2(\mathbb R_+)$ defined by
$(T(t)f)(s)=f(s+t), t \ge 0,$ for $f \in L^2(\mathbb R_+).$
If $A$ is the generator of $(T(t))_{t \ge 0},$ then 
$\sigma(A)=\{\lambda \in \mathbb C: {\rm Re}\, \lambda \le 0\},$ and $s_e(A)=s_0(A)=s_R(A)=0.$  On the other hand, clearly $T(t) \to 0$ as
$t \to \infty$ strongly.
Thus  Theorem \ref{main} and Corollary \ref{main1} cannot be improved by removing the function $a$ from their formulations,
even for linear equations \eqref{ca1} with $K=0.$ For other concrete examples of such semigroups one may consider the the multiplication semigroups $(T(t)f)(z)=e^{-tz}f(z)$, $t \ge 0,$
on appropriate spaces $L^2(\Omega)$ with  $\Omega$ from the closed left half-plane.
\end{remark}

\section{Backward damped wave equations and other applications}\label{sec:exa}
In this section, we illustrate our abstract results by applying them to
the study of backward damped wave equations or, equivalently, ``excited'' wave equations,
subject to nonlinear forcing terms of the form $-f(t,\cdot,u)$. (The minus is chosen
to simplify  the sign condition in \eqref{eq:sign}.)
In this  basic PDE setting a positive abscissa
$s_R(A)>0$ and compact nonlinear perturbations $K(t,\cdot)$ occur naturally,
including situations where positivity of $s_R$ stems from the resolvent growth.
In contrast to earlier work, as in e.g.\ in \cite{Sola}, we allow for time-dependent and merely continuous $f$
so that we cannot expect uniqueness in general.  As we focus on asymptotic properties of solutions,
our assumptions on nonlinearities are, of course, not best possible, and serve first of all
to create the right framework to the study of lower bounds for solutions in spectral terms.

We first look at the excited wave equation
\begin{align}
\partial_{tt} u(t,x) -\Delta u(t,x) -b(x)\partial_t u(t,x) &= -f(t,x,u(t,x)), \,\, x\in \mathcal M, \ \ t\in \mathbb R,\label{eq:excited} \\
u(0,x)=u_0(x),  \quad \partial_tu(t,0)&=u_1(x), \qquad x\in \mathcal M,\label{eq:excited2}
\end{align}
on a $d$-dimensional, compact, smooth and connected Riemannian manifold $\mathcal M$ without or with boundary
$\partial \mathcal M.$
Here $\Delta$ is the Laplace--Beltrami operator on $\mathcal M,$ depending in general on a metric on $\mathcal M.$
We do not indicate this dependence since it will not be relevant. If $\partial \mathcal M \neq \emptyset$,
we additionally impose Dirichlet boundary conditions in \eqref{eq:excited}.

In order to apply the results from the previous section, we have to consider complex-valued $u$ in
\eqref{eq:excited} and related equations. Since we do not want to restrict ourselves to holomorphic maps
$\zeta\mapsto f(t,x,\zeta)$, we identify $\CC$ with $\RR^2$ equipped with the Euclidean scalar product
$\zeta_1\cdot \zeta_2=\xi_1\xi_2+\eta_1\eta_2=\Re(\zeta_1\overline{\zeta_2})$ where
$\zeta_j=\xi_j+i\eta_j=(\xi_j,\eta_j)$. Differentiability is then understood in the real sense and derivatives
are only $\RR$-linear. Fortunately, this does not affect the basic rules from calculus that we use here.
Throughout we assume that
\begin{equation}\label{eq:f}
\begin{split}
&b\in L^\infty(\cM), \qquad b \ge 0, \qquad b>0  \,\, \text{\ on an open subset of}\,\, \cM,\\
&f:\RR\times \cM\times\CC\to\CC \text{ \ is continuous in
 the $3$d variable and measurable},\\
&|f(t,x,\zeta)|\le \kappa(t)(1+|\zeta|^\alpha), \qquad  |f(t,x,\zeta)-f(s,x,\zeta)|\le \omega(|t-s|) (1+|\zeta|^\alpha),
\end{split}\end{equation}
for some $0\le\alpha< d/(d-2)_+$ and all $t,s\in\RR$, $\zeta\in \CC$,  $x\in \cM$,
where $\kappa: \mathbb R \to [0,\infty)$ is locally bounded and
$\omega:[0,\infty)\to [0,\infty)$ satisfies $\omega(\tau)\to 0$ as $\tau\to0.$
(Here $y_+=\max\{y,0\}$ for $y\in\RR$ and $\frac{d}{0}:=\infty$.)
The assumption $b\ge0$ fits to the interpretation of \eqref{eq:excited}
as excited  wave equation.
It is crucial to observe that \eqref{eq:f} implies the continuity of $f$ in $(t,\zeta).$

To formulate \eqref{eq:excited},\eqref{eq:excited2} as an evolution equation of the form \eqref{solaeq},
we set $V=H^1(\cM)$ if $\cM$ has no boundary  and $V=H^1_0(\cM)$ otherwise.
Recall that $V$ is compactly embedded into $L^{2\alpha}(\cM)$, see Theorem~2.34 of \cite{Aubin}.
On the state space $X=V\times L^2(\cM)$ we introduce the operator matrix
\begin{equation}\label{excited_equa}
 A_+= \begin{pmatrix} 0 & I\\ \Delta & b \end{pmatrix}, \qquad D(A_+)= (H^2(\cM)\cap V)\times V.
\end{equation}
It is well known that $A_+$ generates a $C_0$-group $(T_+(t))_{t\in\RR}$ on $X$, i.e.,
$\pm A_+$ are generators of $C_0$-semi\-groups. We write elements of $X$ as $w=(u,v)$.
The forcing term is expressed by $K(t,w)= (0,-f(t,\cdot, u))$.
As we will prove in Proposition~\ref{lem:K1}  below,
$K:\RR\times X\to X$ is jointly continuous and collectively compact.

We study \eqref{eq:excited} in forward time.
Using standard properties of the wave equation with the (continuous) right-hand side
$t\mapsto h(t):=-f(t,\cdot,u(t))$, it can be checked that a mild solution $w$ to
\begin{align}\label{eq:excited1}
w'(t)= A_+w(t) +K(t,w(t)), \quad t\ge0,\quad w(0)=w_0:=(u_0,u_1)\in X,
\end{align}
on a time interval $J=[0,T)$ is of the form $w=(u,\partial_t u)$ for a function $u$
in $C^2(J,H^{-1}(\cM))\cap C^1(J, L^2(\cM))\cap C(J,V)$ which solves \eqref{eq:excited}.
(Cf.\ \cite[Lemma~6.2.1]{Cazenave}).

For the damped case, we replace $+b$ by $-b$ obtaining the operator
 \begin{equation}\label{damped_equa}
A_-= \begin{pmatrix} 0 & I\\ \Delta & -b \end{pmatrix}, \qquad D(A_-)=D(A_+). 
\end{equation}
It generates a $C_0$-group $(T_-(t))_{t\in\RR}$,  which is contractive in forward time since $b\ge0$,
and it corresponds to the damped wave equation
\begin{align}
\partial_{tt} u(t,x) -\Delta u(t,x) +b(x)\partial_t u(t,x) &= -f(t,x,u(t,x)), \,\, x\in \cM, \ \ t\in \mathbb R, \label{eq:damped}\\
u(0,x)=u_0(x), \quad \partial_tu(t,0)&=u_1(x), \quad x\in \cM.\label{eq:damped2}
\end{align}
The above remarks on the solution also apply here.
We study \eqref{eq:damped}
 \emph{backwards} in time, i.e., for $t \le 0.$
To bring it to standard forward form,
one looks at $\tilde{u}(t)=u(-t)$ for $t \ge 0$
satisfying
\begin{align}
\partial_{tt} \tilde{u}(t,x) -\Delta \tilde{u}(t,x) -b(x)\partial_t \tilde{u}(t,x)
           &= -f(-t,x,\tilde{u}(t,x)), \, \, x\in \cM, \, t\ge 0, \label{eq:backward}\\
\tilde{u}(0,x)=u_0(x),  \quad \partial_t\tilde{u}(t,0)&=-u_1(x), \quad x\in \cM.\label{eq:backward2}
\end{align}
This system coincides with \eqref{eq:excited},\eqref{eq:excited2}  except for the additional minus in $f$
and before $u_1$. We drop the
tilde. To rewrite this problem as \eqref{solaeq}, we use the operators $B=-A_-$ and $w\mapsto -K(-t,w)=(0,f(-t,\cdot,u))$.
Then \eqref{eq:backward},\eqref{eq:backward2} can be reformulated as the first-order problem
\begin{align}\label{eq:backward1}
w'(t)= Bw(t)- K(-t,w(t)),\qquad t\ge 0, \qquad w(0)=(u_0, -u_1).
\end{align}
Moreover, if $J: X \to X$ is defined by $J(u,v)=(u,-v),$ then $A_+=J(-A_-)J^{-1}$
(as unbounded operators, see \cite[p. 59]{EnNa00}), so that $\sigma(A_+)=\sigma(-A_-)$
and $R(\lambda, A_+)=J R(\lambda, -A_-)J^{-1}$ for all $\lambda \in \rho(A_+).$
Thus,  
\begin{equation}\label{sr_adjoint_wave}
s_R(-A_-)=s_R(A_{+}).
\end{equation}
To clarify the relations between $A_+$ and $-A_-,$ note that by e.g.\ \cite[Lemma 1, p.75]{Yak}
we have  $(A_+)^*=-A_-,$ and thus one obtains \eqref{sr_adjoint_wave}
once again. 

It is well-known that $\sigma(A_-)\subset \{\lambda: -\|b\|_\infty\le {\rm Re}\, \lambda \le 0\}$
since $b\ge0$, that $\sigma(A_-)$ is invariant under conjugation,  and that 
it consists of a discrete set of eigenvalues since $A_-$ has compact resolvent.
Moreover if $b\ge0$ is non-zero, we have  $\sigma(A_-)\cap i\mathbb R=\emptyset$ if $\partial \mathcal M \neq \emptyset$
and $\sigma(A_-)\cap i\mathbb R=\{0\}$ (with constants as  eigenfunctions)
if $\mathcal M$ has no boundary. (See e.g. \cite[p.74]{Lebeau}, where the smoothness of $b$ assumed there
does not play a role in these statements, or \cite[Section 4]{Anan_L}.)

In the latter case, let $P_0$ be the Riesz projection corresponding to $0$
and equip $X_0=(I-P_0)X$ with the inner product norm
\[
\|(u_0, u_1)\|_{X_0}=
\|(-\Delta)^{1/2}u_0\|_{L^2} +\|u_1\|_{L^2}, \qquad (u_0,u_1) \in X_0.
\]  
Then $\dot T_{-}(t):=T_-(t) \upharpoonright_{X_0}, t \in \mathbb R,$ is a $C_0$-group on the Hilbert space $X_0$ generated by
$\dot A_{-}:=A_- \upharpoonright_{X_0},$ which is contractive for $t \ge 0.$ Moreover, by e.g.\ \cite[Section 4]{Anan_L},
we have $\sigma(\dot A_{-})=\sigma(A)\setminus \{0\} \subset \{\lambda: {\rm Re}\, \lambda \le 0\}$ and
there are $c_1,c_2>0$ with
\[
c_1\|R(\lambda, A_-)\|_{B(X)} \le \|R(\lambda, \dot A_{-})\|_{B(X_0)} \le c_2\|R(\lambda, A_-)\|_{B(X)}
\] 
for  $\lambda \in \rho(A_-)$  with $|\lambda| \ge\epsilon_0$ for an appropriate $\epsilon_0>0.$
This construction allows one, in particular, to study the energy asymptotics for \eqref{eq:damped} in a unified manner,
see e.g.\ \cite{Anan_L} (assumption \cite[(2-6)]{Anan_L} holds due to \cite[p.74]{Lebeau}), and also
\cite{Chill} and \cite{BBT}.
We will also use $\dot A_{-}$ in the sequel to setudy  the resolvent of $A_-$, see Example \ref{dyatlov}.

Before we consider spectrum and resolvent of $A_\pm$
in specific cases, we first establish the required properties of $K$,
\begin{proposition}\label{lem:K1}
Let $f$ and $b$ satisfy \eqref{eq:f}. Then the map $K:\RR\times X\to X$ defined
above is jointly continuous and collectively compact. If also
\begin{equation}\label{eq:lin}
|f(t,x,\zeta) |\le \kappa(t)(1+|\zeta|)
\end{equation}
for $\kappa$ from \eqref{eq:f} and all $(t,x,\zeta)\in\RR\times \cM\times \CC$,
 then  $\|K(t,w)\|\le c\kappa(t)(1+\|w\|)$ for all $w \in X$ and some $c>0$.
\end{proposition}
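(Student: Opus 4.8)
The plan is to verify three things: (a) that for each fixed $(t,w)\in\RR\times X$ the vector $K(t,w)=(0,-f(t,\cdot,u))$ actually lies in $X=V\times L^2(\cM)$; (b) joint continuity of $K$; and (c) collective compactness. For (a) and the core estimates I would use the compact Sobolev embedding $V\hookrightarrow L^{2\alpha}(\cM)$ (valid since $\alpha<d/(d-2)_+$, cf.\ Theorem~2.34 of \cite{Aubin}), together with the growth bound $|f(t,x,\zeta)|\le\kappa(t)(1+|\zeta|^\alpha)$ from \eqref{eq:f}. For $w=(u,v)$, this gives $\|f(t,\cdot,u)\|_{L^2}^2\le 2\kappa(t)^2(|\cM|+\|u\|_{L^{2\alpha}}^{2\alpha})<\infty$, so $K$ maps into $X$, and the substitution (Nemytskii) operator $u\mapsto f(t,\cdot,u)$ is continuous and bounded from $L^{2\alpha}(\cM)$ into $L^2(\cM)$ by the classical Carathéodory/Krasnoselskii theory (here I would cite the standard result, e.g.\ in \cite{Cazenave} or a standard reference on Nemytskii operators; note $f$ is a Carathéodory function in $(x,\zeta)$ for each fixed $t$ since it is measurable and continuous in $\zeta$).

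For joint continuity, fix $(t_0,w_0)$ with $w_0=(u_0,v_0)$ and take $(t,w)\to(t_0,w_0)$, $w=(u,v)$. I would split
\[
\|K(t,w)-K(t_0,w_0)\|_X = \|f(t,\cdot,u)-f(t_0,\cdot,u_0)\|_{L^2}
\le \|f(t,\cdot,u)-f(t_0,\cdot,u)\|_{L^2} + \|f(t_0,\cdot,u)-f(t_0,\cdot,u_0)\|_{L^2}.
\]
The first term is controlled by the time-modulus estimate $|f(t,x,\zeta)-f(s,x,\zeta)|\le\omega(|t-s|)(1+|\zeta|^\alpha)$, giving a bound $\omega(|t-t_0|)\sqrt{2}(|\cM|^{1/2}+\|u\|_{L^{2\alpha}}^{\alpha})$, which tends to $0$ since $\|u\|_{L^{2\alpha}}$ stays bounded (using $V\hookrightarrow L^{2\alpha}$ and $u\to u_0$ in $V$) and $\omega(\tau)\to0$. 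The second term tends to $0$ by continuity of the Nemytskii operator from $L^{2\alpha}$ to $L^2$, again using $u\to u_0$ in $V\hookrightarrow L^{2\alpha}$.

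For collective compactness, let $B\subset X$ be bounded and $t_0\ge0$; I must show $K([0,t_0]\times B)$ is precompact in $X$, i.e.\ that $\{f(t,\cdot,u):t\in[0,t_0],(u,v)\in B\}$ is precompact in $L^2(\cM)$. The idea is to factor through the compact embedding: $B$ bounded in $X$ implies the $u$-components form a bounded set $B_V\subset V$, hence a precompact set in $L^{2\alpha}(\cM)$. The Nemytskii operator $u\mapsto f(t,\cdot,u)$ maps bounded subsets of $L^{2\alpha}$ into bounded subsets of $L^2$ (by the growth bound) but is only continuous, not compact, for a single $t$; however, composing a continuous map with a precompact set yields a precompact image, and the extra parameter $t\in[0,t_0]$ is handled by an $\e$-net argument exactly as in Lemma~\ref{lemmacom}: cover $[0,t_0]$ by finitely many points $t_1,\dots,t_N$ with spacing so small that $\omega(|t-t_j|)(1+\sup_{u\in B_V}\|u\|_{L^{2\alpha}}^\alpha)<\e$ for the nearest $t_j$, and cover each (precompact) set $\{f(t_j,\cdot,u):u\in B_V\}$ by a finite $\e$-net; the union is a finite $2\e$-net for $K([0,t_0]\times B)$. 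Finally, for the last claim, under the linear growth hypothesis \eqref{eq:lin} one has $\|f(t,\cdot,u)\|_{L^2}\le\kappa(t)\sqrt2(|\cM|^{1/2}+\|u\|_{L^2})\le c\,\kappa(t)(1+\|u\|_V)\le c\,\kappa(t)(1+\|w\|_X)$ using the Poincaré-type embedding $V\hookrightarrow L^2(\cM)$, which gives the stated bound.

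The main obstacle is the interplay between the two topologies on the $u$-variable: everything about $f$ lives naturally on $L^{2\alpha}(\cM)$, while the state space records $u\in V$, so every continuity/compactness statement must be threaded through the compact embedding $V\hookrightarrow L^{2\alpha}(\cM)$ and the (merely continuous, non-compact) Nemytskii operator $L^{2\alpha}\to L^2$; the compactness then comes entirely from the embedding, while the $t$-dependence is absorbed by a routine $\e$-net argument modeled on Lemma~\ref{lemmacom}. I expect the Nemytskii-operator continuity from $L^{2\alpha}$ to $L^2$ to be the one point where a genuine external citation is needed; the rest is bookkeeping with the growth and modulus bounds in \eqref{eq:f}.
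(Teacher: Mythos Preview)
Your proposal is correct and follows essentially the same route as the paper: both arguments thread everything through the compact Sobolev embedding $V\hookrightarrow L^{2\alpha}(\cM)$, use the growth bound in \eqref{eq:f} to control the Nemytskii map $u\mapsto f(t,\cdot,u)$ from $L^{2\alpha}$ to $L^2$, and obtain collective compactness via the time-modulus estimate exactly in the spirit of Lemma~\ref{lemmacom}. The only cosmetic difference is that you invoke the classical Krasnoselskii/Nemytskii continuity theorem as a black box (and split the joint continuity into a $t$-part plus a $u$-part), whereas the paper reproves this continuity by hand via a subsequence/dominated-convergence argument; neither choice affects the substance of the proof.
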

\begin{proof}
The last claim is clear. For the first, let $(t_n)_{n=1}^{\infty}\subset \mathbb R$
and $(w_n)_{n=1}^\infty \subset X$ be such that $t_n\to t$ in $\RR$ and $w_n=(u_n,v_n)\to w=(u,v)$ in $X$ as $n \to \infty.$
Since $V$ is compactly embedded in $L^{2\alpha}(\cM)$, there is a subsequence $(n_k)$ and a map
$g\in L^{2\alpha}(\cM)$ such that  $u_{n_k}\to u$ in $L^{2\alpha}(\cM)$ and pointwise a.e.\ as $k\to\infty$ and
$|u_{n_k}|\le g$ a.e.\ for all $k$. As noted above, $f$ is jointly continuous in $(t,\zeta),$ and
since $\kappa$  is locally bounded
we have $m:=\sup_n \kappa(t_n)<\infty$.
Hence, $f(t_{n_k},\cdot, u_{n_k})$ tends pointwise a.e.\ to $f(t,\cdot,u)$ as $k \to \infty$ and
$|f(t_{n_k},\cdot,u_{n_k})|\le m(1+|g|^\alpha)\in L^2(\cM)$ by \eqref{eq:f}.
It follows $f(t_{n_k},\cdot,u_{n_k})\to f(t,\cdot,u)$in $L^2(\cM)$ as $k \to \infty$, and so $K$ is continuous.

To prove compactness, take $t\in\RR$ and a bounded sequence $(w_n)_{n=1}^\infty$ in $X$. Again, there is a
subsequence $(n_k)$ and maps
$g\in  L^{2\alpha}(\cM)$ and $u\in V$ such that  $u_{n_k}\to u$ in $L^{2\alpha}(\cM)$ and pointwise a.e.\ as $k\to\infty$
and $|u_{n_k}|\le g$ a.e.\ for all $k$. As above, we infer that $f(t,\cdot,u_{n_k})$ tends to $f(t,\cdot,u)$
in $L^2(\cM)$, and thus $K(t,\cdot):X\to X$ is compact. To use Lemma~\ref{lemmacom}, let $t_0,r>0$, $t,s\in[-t_0,t_0]$
and  $\|w\|\le r$. Due to Sobolev's embedding, we then obtain $\|u\|_{L^{2\alpha}}\le Cr$
for a constant $C>0$ independent of $r,$ and \eqref{eq:f}
yields
\begin{align*}
 \|f(t,\cdot,u)-f(s,\cdot,u)\|_{L^2(\cM)}\le& \omega(|t-s|) \|1+ |u|^\alpha\|_{L^2(\cM)}\\
        \le&  \omega(|t-s|) (\operatorname{vol}(\cM)^\frac12 + C^\alpha r^\alpha).
\end{align*}
As $\omega(\tau)\to0$ as $\tau\to0$, the map $K$ is collectively compact by Lemma~\ref{lemmacom}.
\end{proof}

Due to this result, Corollary~\ref{thm:cp2} and Proposition~\ref{prop:growth}, if
$\alpha\le 1$ in \eqref{eq:f} then the operators $\pm K(\pm t,\cdot)$ fit to Theorem~\ref{main} and its corollaries.

If $1<\alpha<d/(d-2)_+$, we need a sign condition and some extra time regularity of $f$
to show global existence by means of a standard energy estimate, cf.\ e.g.\ \cite[Chapter~6]{Cazenave}.
We define the potential of $\zeta\mapsto f(t,x,\zeta)$ by the (real) line integral
\begin{equation}\label{eq:ph}
\ph(t,x,\zeta)=\int_0^1 f(t,x,\tau \zeta)\cdot \zeta d\tau
\end{equation}
for $(t,x,\zeta)\in \RR\times \cM\times \CC$. Notice that
\begin{equation}\label{est:ph}
|\ph(t,x,\zeta)|\le c\kappa(t)(1+|\zeta|^{\alpha+1}), \qquad (t,x,\zeta)\in \RR\times \cM\times \CC,
\end{equation}
for a constant $c>0$, by \eqref{eq:f}. In the next result it is assumed  that $\ph$ is differentiable in $\zeta$ with
$\nabla_\zeta \ph=f$. We first discuss this assumption.
\begin{remark}
If $f$ is $C^1$ in $\zeta=(\xi,\eta)$, then the same is true for $\varphi$.
We then have $\nabla_\zeta\varphi= f$ if and only if
$\partial_\eta f_1=\partial_\xi f_2$.
However,  $\nabla_\zeta \varphi$ also exists and is equal to $f$ for the standard example
$f(t,x,\zeta)=\psi(t,x,|\zeta|^2)\zeta$, where $\psi:\RR\times \cM\times \RR\to\RR$  is continuous
in the third variable. Here we do \emph{not} need differentibility of $f$ in $\zeta$ and obtain
\[\ph(t,x,\zeta)=\frac12\int_0^{|\zeta|^2}\psi(t,x,r)dr.\]
\end{remark}

We note that in the next proposition we do no use the results discussed in Section~\ref{sec:wp},
besides a variant of the basic Theorem~\ref{pazy}.
Instead we  construct the solutions as weak* limits of (subsequences of) solutions
to regularized problems. This standard method is based on uniform bounds for the energy and the compactness of $K$.
(See e.g.\ \cite[Section~4.4]{Temam} for a similar approach using a Galerkin approximation.)
\begin{proposition}\label{lem:K2}
Let $b$ and  $f$ satisfy \eqref{eq:f} with $1\le \alpha <d/(d-2)_+$ and $\ph$ be given by \eqref{eq:ph}. Assume that
$\ph$ is differentiable in the third variable,
\begin{equation}\label{eq:sign}
\ph\ge0, \qquad \nabla_\zeta \ph=f, \qquad |f(t,x,\zeta)-f(s,x,\zeta)|\le  \ell(t_0)|t-s|(1+|\zeta|)
\end{equation}
for $t_0>0$, $t,s\in[-t_0,t_0]$, $\zeta\in\CC$, $x\in \cM$, and a locally bounded map $\ell:[0,\infty)\to [0,\infty)$.
Then \eqref{eq:excited1} and \eqref{eq:backward1} have global
solutions $w$ such that $\|w(t)\|\le c(t_0,r)$ for $0\le t\le t_0$ and $\|(u_0,u_1)\|\le r$ and every $r,t_0>0$.
\end{proposition}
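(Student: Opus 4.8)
The plan is as follows. First I would note that the two systems \eqref{eq:excited1} and \eqref{eq:backward1} have the same structure: in both the linear part generates a $C_0$-group and the nonlinearity has the form $w=(u,v)\mapsto(0,\mp f(\pm t,\cdot,u))$, and the hypotheses \eqref{eq:f} and \eqref{eq:sign} on $f$ are symmetric under $t\mapsto-t$. Hence it suffices to treat the excited equation \eqref{eq:excited1}, the backward case following by replacing $f(t,\cdot,\cdot)$ with $f(-t,\cdot,\cdot)$ and $u_1$ with $-u_1$ throughout. The solution $w$ will be obtained as a limit of solutions of regularized problems, the point being that the regularized problems admit a uniform energy bound and that $K$ is compact. (Alternatively one could run the same scheme on spectral Galerkin truncations in space, cf.\ \cite[Section~4.4]{Temam}.)

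\emph{Regularization.} For $n\in\NN$ I would replace $f$ by $f_n:=\nabla_\zeta\ph_n$, where $\ph_n$ is built from $\ph$ by mollifying in $\zeta$ at scale $1/n$ and then composing with a smooth nondecreasing plateau $\Lambda_n\colon[0,\infty)\to[0,\infty)$ with $\Lambda_n(s)=s$ for $s\le n$, $\Lambda_n$ constant for $s\ge2n$ and $0\le\Lambda_n'\le1$. Since $\ph\ge0$ and $\nabla_\zeta\ph=f$, each $\ph_n$ is nonnegative, bounded, $C^\infty$ in $\zeta$, and satisfies $\nabla_\zeta\ph_n=f_n$; moreover $f_n$ is globally Lipschitz in $\zeta$ (so $w\mapsto(0,-f_n(t,\cdot,u))$ is Lipschitz on bounded subsets of $X$, locally uniformly in $t$), and from \eqref{eq:f}, \eqref{est:ph}, \eqref{eq:sign} one obtains the bounds $|f_n(t,x,\zeta)|\le c\kappa(t)(1+|\zeta|^\alpha)$, $0\le\ph_n(t,x,\zeta)\le c\kappa(t)(1+|\zeta|^{\alpha+1})$ and $|\partial_t\ph_n(t,x,\zeta)|\le c\,\ell(t_0)(1+|\zeta|^2)$ on $[-t_0,t_0]$, all uniform in $n$, together with $f_n\to f$ and $\ph_n\to\ph$ locally uniformly. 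Since $f_n$ is bounded, Theorem~\ref{pazy} and Corollary~\ref{cor:lin-growth}, applied to the $C_0$-semigroup $(T_+(t))_{t\ge0}$, yield a unique global mild solution $w_n=(u_n,\partial_t u_n)$ of the regularized problem; for data in $D(A_+)$ it is classical, and the general case follows by density and the continuous dependence in Theorem~\ref{pazy}(iii).

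\emph{Uniform energy bound.} Testing the regularized equation $\partial_{tt}u_n-\Delta u_n-b\,\partial_t u_n=-f_n(t,x,u_n)$ with $\partial_t u_n$ and using $\nabla_\zeta\ph_n=f_n$, I would arrive at
\[
\frac{d}{dt}E_n(t)=\int_\cM b\,|\partial_t u_n|^2\,dx+\int_\cM\partial_t\ph_n(t,x,u_n)\,dx,\qquad E_n(t)=\tfrac12\|\partial_t u_n\|_{L^2}^2+\tfrac12\|\nabla u_n\|_{L^2}^2+\int_\cM\ph_n(t,x,u_n)\,dx,
\]
where $E_n\ge0$ because $\ph_n\ge0$. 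Setting $\mathcal E_n:=E_n+\tfrac12\|u_n\|_{L^2}^2$, the estimates $\int b|\partial_t u_n|^2\le2\|b\|_\infty E_n$, $|\partial_t\ph_n|\le c\,\ell(t_0)(1+|\zeta|^2)$ and $\frac{d}{dt}\tfrac12\|u_n\|_{L^2}^2\le\tfrac12\|u_n\|_{L^2}^2+\tfrac12\|\partial_t u_n\|_{L^2}^2$ give $\frac{d}{dt}\mathcal E_n(t)\le C_{t_0}(1+\mathcal E_n(t))$ on $[0,t_0]$. Since $\mathcal E_n(0)\le c(r)$ whenever $\|(u_0,u_1)\|_X\le r$ (here I use $0\le\ph_n\le c\kappa(0)(1+|\zeta|^{\alpha+1})$ and $V\hookrightarrow L^{\alpha+1}$, which is where $1\le\alpha<d/(d-2)_+$ enters), Gronwall's lemma gives $\sup_n\sup_{0\le t\le t_0}\mathcal E_n(t)\le c(t_0,r)$, hence $\sup_n\sup_{0\le t\le t_0}\|w_n(t)\|_X\le c(t_0,r)$ since $\mathcal E_n$ dominates $\|w_n\|_X^2$ up to a constant.

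\emph{Passage to the limit.} Fix $t_0>0$. Writing $w_n(t)=T_+(t)w_0+\int_0^tT_+(t-s)(0,-f_n(s,\cdot,u_n(s)))\,ds$, the uniform bound above puts the integrands in a fixed bounded subset of $X$; moreover, arguing as in Proposition~\ref{lem:K1} and using $f_n\to f$ uniformly on bounded $\zeta$-sets, the set $\{(0,-f_n(s,\cdot,u_n(s))):n\in\NN,\ 0\le s\le t_0\}$ is relatively compact in $X$. An Arzel\`a--Ascoli argument as in the proof of Proposition~\ref{prop1} then shows that the Duhamel terms form a relatively compact subset of $C([0,t_0],X)$, so a subsequence of $(w_n)$ converges there to some $w$. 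Then $u_n\to u$ in $C([0,t_0],L^{2\alpha}(\cM))$, hence (along a further subsequence, by dominated convergence) $f_n(s,\cdot,u_n(s))\to f(s,\cdot,u(s))$ in $L^2(\cM)$ for each $s$, and passing to the limit in the Duhamel formula identifies $w$ as a mild solution of \eqref{eq:excited1} on $[0,t_0]$ with $\|w(t)\|_X\le c(t_0,r)$. A diagonal extraction over $t_0=1,2,\dots$ then delivers a global solution with the stated bound. The main difficulty is the regularization step, which must supply $f_n$ that are at once regular enough for local well-posedness, compatible with the gradient/sign structure ($\ph_n\ge0$, $\nabla_\zeta\ph_n=f_n$), subject to $n$-uniform growth and time-regularity bounds, and convergent to $f$; granting this, the energy estimate is routine once the Lyapunov functional $E_n$ (incorporating $\int\ph_n$ via the sign condition) is chosen, and the only delicate point in the limit passage is the uniform-in-$n$ relative compactness of the nonlinear terms.
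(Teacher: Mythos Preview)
Your overall strategy coincides with the paper's: regularize $f$ so as to preserve the potential structure $\ph_n\ge0$, $\nabla_\zeta\ph_n=f_n$, derive a uniform energy bound for the approximations via the Lyapunov functional $E_n$, and pass to the limit using the compactness of the nonlinearity. The energy step and the limit step are essentially correct; the paper uses weak$^*$ limits together with Simon's compact embedding where you invoke Arzel\`a--Ascoli on the Duhamel integrals, but both routes work.

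There is, however, a genuine gap in your regularization. With $\ph_n=\Lambda_n(\rho_n*\ph)$ (plateau on the \emph{values} of $\ph$) one gets
\[
f_n=\nabla_\zeta\ph_n=\Lambda_n'(\rho_n*\ph)\,(\rho_n*f).
\]
The hypothesis $\ph\ge0$ gives no \emph{lower} bound on $\ph$ for large $|\zeta|$, so the set $\{\zeta:\Lambda_n'(\rho_n*\ph)(t,x,\zeta)\ne0\}$ need not be bounded in $\zeta$, while $|\rho_n*f|$ still grows like $|\zeta|^\alpha$. Thus $f_n$ is in general neither bounded nor globally Lipschitz in $\zeta$, so your sentence ``Since $f_n$ is bounded, Theorem~\ref{pazy} and Corollary~\ref{cor:lin-growth}\,\ldots\ yield a unique global mild solution'' is unjustified, and the appeal to Corollary~\ref{cor:lin-growth} breaks down.

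The paper's fix is to truncate in the $\zeta$-variable rather than on the values of $\ph$: set $\tilde\ph_n(t,x,\zeta)=\ph(t,x,\chi_n(\zeta))$ with a smooth cutoff $\chi_n:\RR^2\to\RR^2$ satisfying $|\chi_n(\zeta)|\le n+1$, $\chi_n(\zeta)=\zeta$ for $|\zeta|\le n$, $|\chi_n'|\le c$, and then mollify. This makes $\tilde f_n=\chi_n'^\top f(\cdot,\cdot,\chi_n(\cdot))$ genuinely bounded (hence $f_n=\rho_n*\tilde f_n$ bounded with bounded derivatives), so $K_n$ is globally Lipschitz and one obtains global \emph{classical} solutions for data in $D(A_+)$, on which the energy identity is rigorous. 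Alternatively, with your $f_n$ one could still argue: $f_n$ is locally Lipschitz (being $C^\infty$ in $\zeta$), so Theorem~\ref{pazy} gives a maximal solution, and the energy estimate---proved first for $D(A_+)$ initial data---yields an a~priori bound and hence global existence via the blow-up criterion~(ii). Either way, the reasoning you wrote has to be replaced.
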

\begin{proof}
We only treat \eqref{eq:excited1} since \eqref{eq:backward1} is completely analogous.
For the energy estimate we need classical solutions of \eqref{eq:excited} so that we
approximate $f$ and $w_0=(u_0,u_1)\in X$ by more regular functions. Let $t\in\RR, $ $x\in \cM$, $\zeta \in\CC$, $t_0>0$, and
$n \in \mathbb N.$ Take standard mollifiers $\rho_n:\RR^2\to[0,\infty)$ with support in $\overline{B}(0,\frac1n)$ and
functions $\chi_n\in C^1(\RR^2,\RR^2)$ with range in $B(0,n+1)$, $\chi_n(\zeta)=\zeta$ for $|\zeta|\le n$,
$|\chi'_n(\zeta)|\le c$, and $|\chi_n(\zeta)|\le c|\zeta|$ for a constant $c\ge1$. We set
\begin{align*}
\tilde{\ph}_n(t,x,\zeta)&=\ph(t,x,\chi_n(\zeta)), \qquad \ph_n(t,x,\cdot)=\rho_n\ast  \tilde{\ph}_n(t,x,\cdot),\\
\tilde{f}_n(t,x,\zeta)&=\chi_n'(\zeta)^\top f(t,x,\chi_n(\zeta)), \qquad
f_n(t,x,\cdot)=\rho_n\ast  \tilde{f}_n(t,x,\cdot).
\end{align*}
It is straightforward to check that $\nabla_\zeta \ph_n=f_n$ and $\ph_n\ge0$ for all $n$. Moreover, for each $n \in \mathbb N,$
the maps $\tilde{f}_n,$ $f_n,$ $\partial_\zeta f_n$, $\partial_t f_n$ are bounded on $[-t_0,t_0]\times \cM\times\CC$
 (in general not uniformly in $n$) and there exist $c_{n,k}=c_{n,k}(t_0) \ge 0$ such that
 $|\partial_\zeta^k\ph_n(t,x,\zeta)|\le c_{n,k}(1+|\zeta|^{1+\alpha})$
for $(t,x,\zeta) \in [-t_0,t_0]\times \cM\times\CC$ and $k\in\NN_0$. Finally, $f_n$ and $\ph_n$ satisfy \eqref{eq:f}
respectively \eqref{est:ph}  with constants uniform in $n$, as well as
\[|\partial_t f_n(t,x,\zeta)|\le c\ell(t_0)(1+|\zeta|) \quad\text{ and }\quad
           |\partial_t \ph_n(t,x,\zeta)|\le c\ell(t_0)(1+|\zeta|^2)\]
 for  $n \in \mathbb N$ and a.e.\ $t\in[-t_0, t_0]$. Here and below in this proof we denote by $c$ and $C$ constants which may vary from line to line and  depend only on $t_0>0$ and quantities given by \eqref{eq:f} and \eqref{eq:sign}.
In particular, they do not depend on $n$, $t\in[0,t_0]$, or $w_0$.
Set $K_n(t,w):=(0,-f_n(t,\cdot,u))$.
Observe that  $K_n:[-t_0,t_0]\times X\to X$ is (globally) Lipschitz for each $n \in \mathbb N$ by the properties of $f_n$.

Let $w_0=(u_0,u_1)\in X$ and consider a sequence $(w_{0,n})_{n=1}^\infty$ in $D(A_+)$ with
$w_{0,n}=(u_{0,n},u_{1,n})$ such that  $w_n \to w_0$ in $X$ as $n \to \infty$.
Due to \cite[Theorem~6.1.6]{Pazy}, problem \eqref{eq:excited1} for $K_n$ and $w_{0,n}$  has a (unique) solution
$w_n\in C^1([0, \infty),X)\cap C([0,\infty),D(A_+))$, where $D(A_+)$ is endowed with the graph norm.
It is now easy to see that $w_n=(u_n,\partial_t u_n),$ where
\[u_n\in C^2([0, \infty), L^2(\cM)) \cap C^1([0, \infty),V)\cap C([0, \infty), H^2(\cM))\]
solves  \eqref{eq:excited} for $f_n$ and $w_{0,n}$.

We show that the solutions $w_n$ are bounded in $X$ uniformly in $n\in\NN$ and  then pass to the limit as $n\to\infty$.
To this aim, we look at the energies
\[
 E^0(w)=\frac12\int_\cM (|\nabla u|^2+|\partial_t u|^2)dx, \qquad
   E_n(t,w)= E^0(w)+\int_\cM\ph_n(t,x,u)dx 
	\]
for $w=(u,v)\in X$.   The above observations imply that for all $w \in X$ and $t>0$ one has $E^0(w)\le E_n(t,w)\le c(1+\|w\|_X^{1+\alpha})$,
$E_n(t, \cdot), E^0(\cdot)\in C^1(X,\RR)$, 
and $E_n(\cdot,w)\in W^{1,\infty}_{\mathrm{loc}}(\RR)$, where we also use that $H^1(\mathcal M)\hookrightarrow L^{\alpha+1}(\mathcal M)$
in view of $\alpha+1 < 2d/(d-2)_+$.
For   a.e.\ $t\in[0, t_0 ]$, we compute
\begin{align*}
\tfrac{d}{dt} E_n(t,w_n(t))
  &=\int_\cM\big[\partial_t u_n(t) \cdot [\partial_{tt} u_n(t)-\Delta u_n(t)+ f_n(t,\!\cdot,u_n(t))]\\
&\qquad    + (\partial_t\ph_n)(t,\cdot,u_n(t))\big] dx\\
  &= \!\int_\cM\big[b|\partial_tu_n(t)|^2 +  (\partial_t\ph_n)(t,\cdot,u_n(t))\big]dx\\
  &\le c(\|(u_n(t),\partial_tu_n(t))\|_{L^2}^2+1),\\
E_n(t,w_n(t))&\le E_n(0, w_{0,n}) +c+ c\int_0^t \big(\|u_n(s)\|_{L^2(\cM)}^2+E_n(s,w_n(s))\big) ds.
\end{align*}
We can bound the $L^2$-norm of $u_n(t)$ by means of
$u_n(s)=u_{0,n}+\int_0^s \partial_t u_n(\tau)\, d\tau$, so that
\[ 
E_n(t,w_n(t))\le cE_n(0, w_{0,n}) +c+ c\int_0^t E_n(s,w_n(s)\big) ds.
\]
Gronwall's inequality then yields
\begin{equation*}
E^0(t,w_n(t))\le  E_n(t,w_n(t))\le c(\|w_{0,n}\|_X^{1+\alpha}+1) e^{ct}\le  C(t_0)(\|w_{0}\|_X^{1+\alpha}+1),
\end{equation*}
and hence
\begin{equation}\label{eq:energy}
\|w_n(t)\|_X^2 = \|u_n(t)\|_{L^2}^2+ 2 E^0(t,w_n(t)) \le C(t_0)(\|w_{0}\|_X^{1+\alpha}+1)
\end{equation}
for $0\le t\le t_0$ and constants not depending on $n$ and $w_0$.

Using \eqref{eq:energy}, we can find a subsequence of $(w_n)_{n=1}^\infty$, denoted by the same symbol,
such that $u_n$ and $\partial_t u_n$ tend to functions $u$ and $v$ weakly* in each $L^\infty([0,t_0], V)$
respectively $L^{\infty}([0,t_0], L^2(\cM))$,
and $w=(u,v)$ satisfies \eqref{eq:energy} for a.e.\ $t\in [0,t_0]$. It is straightforward to check that
$v=\partial_t u$, and so $w_n$ converges to $w=(u,\partial_t u)$ weakly* in
$L^\infty([0,t_0], X)$ as $n \to \infty$.
Moreover, by the uniform bound \eqref{eq:energy} and the compact embedding from \cite[Corollary~4]{Simon}
the sequence $(u_n)$ is precompact in  $C([0,t], L^{2\alpha}(M))$, hence in $L^{2\alpha}([0,t_0] \times M)$.
There thus exists a subsequence, also denoted by  $(u_{n})$,
such that $u_{n}\to u$ in $L^{2\alpha}([0,t_0]\times \cM)$ and pointwise a.e.,
as well as $|u_{n}|\le g$ a.e.\ for a function $g\in L^{2\alpha}([0,t_0]\times \cM)$. 

To show convergence of $(f_{n}(t,\cdot,u_{n}(t)))$ and thus of $K_{n}(t, w_{n})$,
let $\e>0$ and take $t\in[0,t_0]$ such that
$(u_{n}(t,\cdot))$ tends to $u(t,\cdot)$ a.e.\ and is bounded a.e.\ by $g(t,\cdot)\in L^{2\alpha}(\cM)$.
Choose $x\in \cM$ outside this null set. For sufficiently large $n$ we have $|u(t,x)|\le n-1$. There is
a number $\delta\in(0,1)$ such that $|f(t,x,\zeta)-f(t,x,u(t,x))|\le \e $ if $|\zeta-u(t,x)|\le \delta$
since $f$ is continuous in $\zeta$. So there exists an index $n_\e$ such that for all $n\ge n_\e$
and $|\eta|\le 1/n$ we have $|u_{n}(t,x)-\eta|< n$ and
\begin{align*}
|\tilde{f}_{n}(t,x,u_{n}(t,x)-&\eta)- f(t,x,u(t,x))| \\
&= |f(t,x,u_{n}(t,x)-\eta)- f(t,x,u(t,x))| \le \e.
\end{align*}
 It follows that $|f_{n}(t,x,u_{n}(t,x))-f(t,x,u(t,x))| \le \e$, and hence
$f_{n}(t,\cdot,u_{n}(t))$ tends pointwise a.e.\ to  $f(t,\cdot,u(t))$ as $n\to\infty$, for a.e.\ $t\in [0,t_0]$.

Since $f_n$ satisfies \eqref{eq:f} uniformly in $n$, we have
$|f_{n}(t,\cdot,u_{n}(t))|\le c\kappa(t)(1+g(t,\cdot)^\alpha)$
and thus $f_{n}(t,\cdot,u_{n}(t))\to f(t,\cdot, u(t))$ in $L^2(\cM)$ for a.e.\ $t\in [0,t_0]$. 
Moreover, by \eqref{eq:f} and \eqref{eq:energy} the map $t\mapsto \|f_n(t,\cdot,u_n(t))\|_{L^2(\cM)}$ is locally bounded independent of $n$. Therefore the right-hand side  in the formula for mild solutions
\[w_{n}(t)=T_+(t)w_{0,{n}}+ \int_0^t T_+(t-s) K_{n}(s, w_{n}(s))ds\]
converges in $C([0,t_0],X)$  as $n\to\infty$. As seen above, the left-hand side tends to
 $w=(u,\partial_t u)$ weakly* in $L^\infty([0,t_0],X)$  so that
 \[w(t)=T_+(t)w_{0}+ \int_0^t T_+(t-s) K(s, w(s))ds\]
 holds for a.e.\ $t\in[0,t_0]$. Since the right-hand side is continuous and $t_0>0$ is arbitrary,
 $w$ solves \eqref{eq:excited1} and satisfies \eqref{eq:energy} for all $0\le t\le t_0$.
\end{proof}

Combined with the above analysis and known spectral properties, Theorem \ref{main} now shows growth of orbits
and thus global instability in concrete examples. One could also use Theorem \ref{subseq}, adding more assumptions
on $f$ to satisfy (A1). We avoid doing so since the modifications are easy.
Note that in view of \eqref{sr_adjoint_wave},
we can switch freely between the two equations \eqref{eq:excited1} and
\eqref{eq:backward1}. So we concentrate just on \eqref{eq:backward1}.

As the first (toy) example, we  improve Theorem~III in \cite{Sola}
in various respects: We can treat time-depending $f$, reduce the regularity requirements in $\zeta$ from $C^2$ to
continuity, and  obtain exponential growth instead of mere unboundedness.
In all examples  we assume that $f$ satisfies \eqref{eq:f} and either \eqref{eq:lin} or \eqref{eq:sign}.

\begin{example}\label{sola}
 Let  $b>0$ be constant and $\mathcal M$ be 
a $d$-dimensional, compact, smooth and connected Riemannian manifold without or with boundary.
 It is straightforward to check that the (point) spectrum of $A_-$
 is given by an unbounded sequence of eigenvalues on $-\frac{b}2+i\RR$ and at most finitely many points in $(-b,0]$.
 It follows $s_R(-A_-)\ge \frac{b}2$. By Corollary~\ref{main1} and Propositions~\ref{lem:K1} and \ref{lem:K2},
given a function $a:[0,\infty)\to [0,\infty)$ decreasing to $0,$
 there are a dense set of initial values $w_0\in X$ and $t_0 \ge 0$ such that the corresponding solutions $w(t,w_0)$ of
 backward damped wave equation  \eqref{eq:backward1} admit the lower bound
$\|w(t,w_0)\|\ge a(t)e^{bt/2}$ for  $t\ge t_0.$ In other words, the energy of solutions to \eqref{eq:backward} grows
exponentially in backward time for a dense of initial values.
\end{example}

The next example generalizes the preceding one by allowing the damping $b$ to be non-stationary 
and far from being smooth. At the same time, it concerns only the  case $d=1.$

\begin{example}\label{cox}
Let now $\mathcal M=[0,1]$ and $b \in {\rm BV}([0,1])$ with $b > 0.$
(Recall that we then impose Dirichlet boundary conditions.)
It was proved in \cite[Theorem 5.3]{Cox} that there is a sequence of eigenvalues
$(\lambda_n)_{n=1}^\infty \subset \sigma(A_-)$ with
${\rm Re}\, \lambda_n \to -\frac\beta2$ as $n \to \infty$ where  $\beta:=\int_{0}^{1}b(s) ds.$
So either $-\frac \beta2+i\mathbb R$ contains an infinite number of the eigenvalues of $A_-,$
or the resolvent of $A_-$ is unbounded on $-\frac \beta2+i\mathbb R,$
and therefore, $s_R(-A_-)\ge \frac\beta2.$ As above, using  Corollary~\ref{main1} along with Propositions~\ref{lem:K1}
and \ref{lem:K2}, for a given  function $a:[0,\infty)\to [0,\infty)$ decreasing to $0$ we find
a dense set of initial values $w_0$ such the solutions of \eqref{eq:backward1} satisfy
$\|w(t,w_0)\|\ge a(t)e^{\beta t/2}$ for $t \ge t_0$ and some $t_0\ge0$.
Moreover, by \cite[Theorem 3.4]{Freitas}, the same result holds without assuming $b > 0$ if $\|b\|_\infty$ is
 sufficiently small. 
\end{example}

We proceed with more involved frameworks,  relying on quite subtle results
from the spectral theory of damped wave equations.

\begin{example}\label{sjostrand}
Let $\mathcal M$ be a manifold as in Example \ref{sola}, having no boundary.
 (One may also consider manifolds with boundary and the corresponding \emph{generalised} geodesic flows,
but this setting leads to technical complications, and is thus omitted for simplicity.)
To explain our next example, we need to introduce several auxiliary notions pertaining
to dynamics of the (Hamiltonian) geodesic flow $(g^t)_{t \in \mathbb R}$ on a Riemannian cosphere bundle $\mathcal S^* \mathcal M$
over $\mathcal M.$ A relevant discussion of geodesic flows can be found e.g. in \cite[Appendix B]{Laurent},
see also \cite[Section 2.1]{Bessa}.
Write
\[ \rho_t=(x_t,\xi_t)=g^t(\rho_0), \qquad  \rho_0=(x_0,\xi_0) \in S^*\mathcal M,\, t \in \mathbb R,\]
and let $\pi: S^* \mathcal M \to \mathcal M$ be a canonical projection.
Given a damping $b \in C^\infty (\mathcal M)$ with $b \ge 0,$ define its Birkhoff ergodic average over the geodesic curve in
$\mathcal M$ as $\langle b \rangle_t (\rho_0): = \frac{1}{2t}  \int^t_{-t} (b\circ \pi \circ g^s)(\rho_0)\, ds, t >0,$
and let
\begin{align*}
b_{-}&:= \sup_{t>0} \inf_{\rho_0 \in S^* \mathcal M} \langle b \rangle_t (\rho_0)
      =\lim_{t \to \infty}\inf_{\rho_0 \in S^* \mathcal M}\langle b \rangle_t (\rho_0),\\
b_{+}&:= \inf_{t>0}\sup_{\rho_0 \in S^* \mathcal M} \langle b \rangle_t (\rho_0)
       =\lim_{t \to \infty}\sup_{\rho_0 \in S^* \mathcal M}\langle b \rangle_t (\rho_0).
\end{align*}
Note that by the Birkhoff ergodic theorem, $\langle b \rangle_\infty (\rho_0):=\lim_{t \to \infty} \langle b \rangle_t (\rho_0)$
exists almost everywhere with respect to the flow invariant (normalised) Liouville
measure on $S^*\mathcal M$, and setting 
\[ 
b^-_{\infty}:={\rm ess \, inf}_{\rho_0 \in S^* \mathcal M}\, \langle b \rangle_\infty, \qquad
b^+_{\infty}:= {\rm ess \, sup}_{\rho_0 \in S^* \mathcal M} \, \langle b \rangle_\infty,
\]
we have
\[b_- \le  b^-_{\infty} \le b^+_{\infty}\le b_+,\]
where all of the inequalities can in general be strict. If the geodesic flow is ergodic, then one has
\[
b^-_{\infty} =b^+_{\infty}
  =\frac{1}{{\rm vol}\, (\mathcal M)} \int_{\mathcal M} b(x)\, dx:=b^*_\infty.
	\]
It was proved in \cite{Lebeau} that for every $\epsilon >0$ there are at most finite number of the eigenvalues
of $A_-$ outside the strip $[-\frac{b_+}{2} -\epsilon,-\frac{b_-}{2}+\epsilon]+i\mathbb R$ (where, in particular $\mathcal M$
may have a boundary). So that there is a sequence of the eigenvalues clustering at $\beta+i\mathbb R$ for some
$\beta \in [-\frac{b_{+}}{2}, -\frac{b_-}{2}]+i\mathbb R,$
and then arguing as in Example \ref{cox} one concludes that $s_R(-A_-)\ge \frac{b_-}{2}.$
We also refer to \cite{Sjostrand} for comments on the generality of this result and an alternative proof.

The result was improved in \cite{Sjostrand} by showing that for every $\epsilon >0$ an infinite number of
 the eigenvalues of $A_-$ belong to the strip
$[-\frac{b^+_{\infty}}{2}-\epsilon, -\frac{b^{-}_{\infty}}{2}+\epsilon]+i\mathbb R.$
Moreover, as proved in \cite{Sjostrand}, if $(g^t)_{t \in \mathbb R}$ is ergodic, then the eigenvalues of $A_-$
cluster at $-\frac{b^*_\infty}{2}+i\mathbb R.$
(See also \cite{Asch} for an illuminating discussion of these results.)
Thus, observing that $s_R(-A_-) \ge \frac{b^-_{\infty}}{2},$ or, if $(g^t)_{t \in \mathbb R}$ is ergodic,
$s_R(-A_-)\ge \frac{b^*_\infty}{2}$, we get an exponential lower norm bound
for $w(t,w_0)$ given by \eqref{eq:backward1}. Clearly,  the two estimates  for $s_R(-A_-)$ considered in this example
may produce $s_R(-A_-)>0,$ and thus  yield a dense set of initial values
 for exponentially growing solutions to  \eqref{eq:backward1}.
\end{example}

So far, our examples depended on the properties of the spectrum of $A_-.$
However, there are interesting situations when one has to invoke the resolvent
of $A_-$ and thus to use a full strength of our Corollary~\ref{main1}. 

\begin{example}\label{dyatlov}
There are many examples in the literature where $(\dot T_{-}(t))_{t\ge 0}$
satisfies $\|\dot T_{-}(t)R(\mu_0, \dot A)^s\|\le M_s e^{-\omega_s t}$ for  $t \ge 0$
and some $s,\omega_s, M_s>0$ as well as, at the same time,
$\omega_0(\dot T_{-})\ge 0.$ In other words,
$(\dot T_{-}(t))_{t\ge 0}$ decays exponentially in the operator norm as a map from
$D((-\dot A_-)^s)$  for some $s>0$ to $X$,  but not in $B(X)$.
By interpolation, it then follows that $(\dot T_{-}(t))_{t \ge 0}$
enjoys such a decay for \emph{all} $s>0,$ and for us it suffices to fix $s=1.$

In view of e.g.\ \cite[Theorem 1.4]{Weiss}, for some $a,c>0$  the resolvent $R(\lambda, \dot A_{-})$
then extends analytically to $\{\lambda: \Re \lambda \ge -a\}$
and is norm bounded there by $c(1+|\lambda|).$
Note that $R(\lambda, \dot A_-)$ is  unbounded on $i\RR$. Indeed, if this was wrong, using
the Neumann's series expansion for $R(is, \dot A_{-})$, $s \in \mathbb R,$ and the resolvent estimate
$\|R(\lambda, \dot A_{-})\|\le ({\rm Re}\, \lambda)^{-1}$ for ${\rm Re}\, \lambda >0,$
we would infer that $s_0(\dot A_{-})<0,$ and then $\omega_0(\dot T_{-})<0$ by \eqref{type}.
By a standard application of Phragmen--Lindel\"of's theorem, $R(\lambda, \dot A_{-})$  also has
to be unbounded on a vertical line $-\beta+i\RR$ for some $\beta\in (0,a]$. As a result, the resolvent
of $-A_-$ is unbounded on  $\beta+i\RR,$ so that $s_R(-A_-)\ge \beta>0$. Hence, as above
for a dense set of initial values we infer that the norms of solutions to the backward damped wave
equation \eqref{eq:backward1} grow exponentially and thus show a global instability result.

To describe concrete situations when such an effect can happen recall that 
if $\mathcal M$ is as in Example \ref{sjostrand} and is negatively curved, then by a classical result due to Anosov (see e.g. \cite{Anosov}) a geodesic flow on $\mathcal {S^*\mathcal M}$ 
admits  countably many periodic orbits. It was revealed in \cite[Theorem 1]{Schenck}
that each such an orbit gives rise to a smooth damping $b$ such that $(\dot T_{-}(t))_{t \ge 0}$ does not decay
exponentially, while its orbits $\dot T_{-}(t)x$ decay exponentially for sufficiently  smooth initial data $x.$
More precisely, it was shown in \cite{Schenck} that, for any periodic geodesic   $\gamma$ in $\mathcal M$ and
$b_0\in C^{\infty}(\mathcal M),$ there exists $\epsilon >0$ such that if $b_0$ vanishes in an
$\epsilon$-neighborhood of $\gamma$ and is positive everywhere else, then such a decay takes place for
$b=cb_0$ for all sufficiently large $c>0.$ 

If $\mathcal M$ is a hyperbolic surface with constant negative curvature, 
then as proved in  \cite[Theorem 1.1]{Jin}, the decay takes place  for \emph{all} smooth dampings $b$.
This property can in fact be generalized to all surfaces $\mathcal M$ whose geodesic flow has the
so-called Anosov property, see \cite[Theorem 6]{Dyatlov}, though such a generalisation is very deep and demanding.
Other instances of the exponential decay for only smooth enough orbits of $(\dot T_{-}(t))_{t \ge 0},$ sometimes
with explicit rates, can also be found in \cite[Section 4]{BurqC}, \cite{Nonnen} and
\cite[Theorem 3 and the subsequent Remark]{SchenckP}. We avoid their discussion to keep our exposition within reasonable limits.
\end{example}

Note that the assumption $b \ge 0$ was chosen just to fit in the framework of the existing work,
and it can be avoided in many cases (e.g.\ in Examples \ref{cox}, \ref{sola}, and \ref{sjostrand}).
In this case, $s_R$ provides just a lower bound, not necessarily growing exponentially.

Finally, we  show non-stabilizability of certain nonlinear infinite-dimen\-sional control systems. In this way, we
generalize the corresponding results in  \cite{Guo} or \cite{Triggiani},
for instance, where the  operators $B,$  $F,$ and $C$ used below are linear and bounded. The literature on
stabilization of control systems is enormous (and, thus, we skipped a discussion of asymptotics for
damped wave equations as a stabilization problem, see e.g.\ \cite{LeR} and \cite{LeR1} concerning that).
We just refer to \cite{Lasiecka} and \cite{Slemrod}  as sample works on nonlinear stabilization and to
\cite{Coron} for general concepts
of nonlinear control. For basics of linear theory one may consult \cite[Section  VI.8]{EnNa00}.

\begin{example}\label{control}
Let $(T(t))_{t \ge 0}$ be a $C_0$-semigroup on a Banach space $X$ with generator $A,$ and let the
control map $B:U\to X$, the  feedback $F:Y\to U$ and the observation map $C:X\to Y$  be all (possibly) nonlinear and continuous
with linear growth (i.e., satisfy \eqref{eq:lingrowth} with a constant $c$), where $U$ and $Y$ are Banach spaces.
Combined with Corollary~\ref{thm:cp2} and Proposition~\ref{prop:growth}, Theorems \ref{theorem1} and \ref{main}
imply the following results:

If $s_R(A)=0,$ then the system $x'=Ax+ BFCx$ is not exponentially stabilizable by a compact
nonlinear feedback $F,$ i.e., it will always have solutions not decaying exponentially.
If $s_R(A)>0,$ the system is not strongly stabilizable by 
a compact nonlinear feedback $F,$ i.e., some of its solutions will not converge to zero.
(In fact, they will grow exponentially.)

The analogous statement holds for time-discrete feedback systems $x_{n+1}=Ax_n+ BFCx_n, n \ge 0,$ for bounded $A$
and continuous $B$, $F$, and $C$ mapping bounded sets into bounded sets, if one replaces
$s_R(A)$ with $r_e(A).$
\end{example}

\section{Appendix}\label{sec:app}

In this section we prove several results on geometric properties of Banach spaces and 
fine spectral theory of semigroups and their generators which are crucial for our lower estimates.
Some of them, e.g.\  Theorem \ref{proplower}, are of independent interest.
The exposition here follows \cite[Section 4]{Muller_Tom} with appropriate changes and improvements,
which warrant an independent treatment. The section makes the paper essentially self-contained.

First, we note a geometric statement from Banach space theory.
Its versions are often used in iterative constructions
arising in the study of orbits of linear operators,
and it are important in our studies too.
To make our presentation self-contained and to provide a better understanding 
of our constructions, we give its proof below.
Let $X$ be a Banach space. Recall that a subset $\Lambda\subset X^*$ is norming if
$$
\|x\|=\sup\Bigl\{\frac{|\langle x,y\rangle|}{\|y\|}: y\in \Lambda, y\ne 0\Bigr\}
$$
for all $x\in X$. Part (a) of the next result is \cite[Lemma V.37.6]{Muller}.
We add two related statements whose proof is a variation of the arguments in \cite{Muller}.
\begin{lemma}\label{geomlemma}
 Let $F$ be a finite-dimensional subspace of a Banach space $X$ and $\e>0$.
\begin{itemize}
\item [(a)] There exists a closed subspace $M\subset X$ of finite
codimension such that
\begin{equation}\label{eq:star}
\|f+m\|\ge(1-\e)\max\bigl\{\|f\|,{\|m\|/2}\bigr\}
\end{equation}
for all $f\in F$ and $m\in M$.
\item [(b)] Let $M$  be given by (a) and
$E$ be a finite-dimensional subspace with $F \subset E$. Then there exists $L\subset M$ of  finite
codimension such that \eqref{eq:star} holds for $f \in E$ and $m \in L$.
\item [(c)] If $\Lambda\subset X^*$ is a norming set, then there exists
a subspace $M\subset X$ satisfying \eqref{eq:star} such that
$M=\bigcap_{j=1}^k{\rm Ker} \, y^*_j$ for some $y^*_1,\dots,y^*_k\in\Lambda$.
\end{itemize}
\end{lemma}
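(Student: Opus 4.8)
My plan is to prove part (a) first, then bootstrap parts (b) and (c) from the same construction. For part (a), the standard device is a compactness/$\varepsilon$-net argument on the unit sphere of the finite-dimensional space $F$. First I would fix a finite $\delta$-net $f_1,\dots,f_N$ in $S_F=\{f\in F:\|f\|=1\}$ for a small $\delta=\delta(\e)$ to be chosen. For each $j$ pick a norm-one functional $x_j^*\in X^*$ with $\langle f_j,x_j^*\rangle=1$ (Hahn--Banach), and set $M=\bigcap_{j=1}^N\ker x_j^*$, a closed subspace of finite codimension. Then for $m\in M$ and $f\in S_F$, choosing $j$ with $\|f-f_j\|\le\delta$, one gets $\|f+m\|\ge|\langle f+m,x_j^*\rangle|=|\langle f,x_j^*\rangle|=|1+\langle f-f_j,x_j^*\rangle|\ge 1-\delta$, which by homogeneity yields $\|f+m\|\ge(1-\delta)\|f\|$ for all $f\in F$, $m\in M$. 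This handles the ``$\|f\|$'' half of \eqref{eq:star}. For the ``$\|m\|/2$'' half, combine this with the triangle inequality: $\|m\|\le\|f+m\|+\|f\|\le\|f+m\|+\tfrac{1}{1-\delta}\|f+m\|$, so $\|f+m\|\ge\frac{1-\delta}{2-\delta}\|m\|\ge\frac{1-\delta}{2}\|m\|$ (for $\delta<1$). Taking $\delta$ small enough that $1-\delta\ge 1-\e$ then gives \eqref{eq:star} in the stated form (the factor $2$ in $\|m\|/2$ leaves ample room). I should double-check the constant arithmetic so that a single choice of $\delta$ makes both halves work with the common factor $(1-\e)$.

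For part (b), I would observe that the construction in (a) is entirely internal: the only thing that changed is the finite-dimensional space on which we take the net. So I apply the same argument verbatim to the larger space $E$ in place of $F$, producing functionals $x_1^*,\dots,x_N^*$, but now I intersect their kernels \emph{with $M$} rather than with all of $X$: set $L=M\cap\bigcap_{j=1}^N\ker x_j^*$. This is still finite-codimensional and contained in $M$, and the estimate $\|f+m\|\ge(1-\e)\max\{\|f\|,\|m\|/2\}$ for $f\in E$, $m\in L$ follows exactly as before. (One need not even use that $F\subset E$ for the estimate itself; the inclusion $F\subset E$ is just the natural hypothesis making (b) a genuine refinement of (a).) For part (c), the point is simply that in the Hahn--Banach step one is free to choose the separating functionals from any norming set: given $f_j\in S_F$, the norming property of $\Lambda$ provides $y_j^*\in\Lambda$ with $|\langle f_j,y_j^*\rangle|/\|y_j^*\|$ as close to $1$ as we like, so after rescaling and enlarging the net slightly we may assume $|\langle f_j,y_j^*\rangle|\ge(1-\delta)\|y_j^*\|$. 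Replacing $x_j^*$ by $y_j^*$ and setting $M=\bigcap_{j=1}^k\ker y_j^*$ runs the same computation with an extra factor $(1-\delta)$, absorbed into the final $\e$.

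I do not expect a serious obstacle here — this is a routine if slightly fiddly argument. The one place to be careful is the constant juggling: I must pin down $\delta$ as an explicit function of $\e$ (something like $\delta=\e/3$ will do) and verify that with that choice both the lower bound $(1-\delta)\|f\|$ and the lower bound $\frac{1-\delta}{2}\|m\|$ dominate $(1-\e)\max\{\|f\|,\|m\|/2\}$; and in (c) I must make sure the approximation loss from working with a norming set rather than with Hahn--Banach functionals is also absorbed by the same margin. Everything else is bookkeeping: finiteness of the net gives finite codimension, and the kernels being closed gives closedness of $M$ and $L$.
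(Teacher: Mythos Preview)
Your proposal is correct and follows essentially the same route as the paper: an $\e$-net on the unit sphere of $F$, separating functionals for the net points, and $M$ as the intersection of their kernels, with the $\|m\|/2$ bound obtained from the $\|f\|$ bound via the triangle inequality. The only (cosmetic) difference is that the paper handles (a) and (c) in one stroke by working with an arbitrary norming set $\Lambda$ from the outset, whereas you do (a) via Hahn--Banach first and then adapt for (c); also, the paper defers the explicit $\|m\|/2$ computation to a reference, while you spell it out.
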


\begin{proof}
 We can assume that $\e<1$. The unit sphere in $F$ is compact, therefore there
 exists a finite subset $D\subset \{f\in F:\|f\|=1\}$ satisfying
 $\dist\{e,D\}\le\e/2$ for all $f\in F$ with  $\|f\|=1$. Let $\Lambda\subset X^*$ be norming.
 For each $d\in D$ there is a functional $y^*_d\in \Lambda$ such
that $\bigl|\bigl\langle d,y^*_d/\|y^*_d\|\bigr\rangle\bigr|>1-\e/2$.
 Set $M=\bigcap_{d\in D}{\rm Ker}\, y^*_d$. Clearly, $M$ is closed with finite codimension.

  To prove the required inequality,
let $f\in F$ and $m\in M$. We can assume that $\|f\|\ne 0$ since
the assertion is clear for $f=0$.
 Choose $d\in D$ with $\bigl\|d-f/\|f\|\bigr\|\le\frac\e2$.
As in \cite[Lemma V.37.6]{Muller}, we then estimate
 \begin{align*}
 \|f+m\|&\ge \Bigl|\Bigl\langle f+m,\frac{y^*_d}{\|y^*_d\|}\Bigr\rangle\Bigr|
 =\Bigl|\Bigl\langle f-\|f\|d,\frac{y^*_d}{\|y^*_d\|}\Bigr\rangle
   +\Bigl\langle\|f\|d,\frac{y^*_d}{\|y^*_d\|}\Bigr\rangle\Bigr|\\
         &\ge \|f\|(1-\e/2)-\bigl\|f-\|f\|d\bigr\|\ge \|f\|(1-\e).
 \end{align*}
One can now conlcude as in  \cite[Lemma V.37.6]{Muller} to show (a) and (c).
Part (b) is is direct consequence of the construction above.
\end{proof}

If $X$ is a Hilbert space,  we can take $M=F^\perp$ in the  lemma.
Thus the subspace $M\subset X$ constructed above plays a role of the
orthogonal complement of a finite-dimensional subspace for
general Banach spaces.

Next we turn to the spectral theory of semigroups related to constrution of 
approximate eigenvectors with some additional geometric properties.
First we prove a property of the boundary essential spectrum of unbounded 
operators, well-known in the bounded case. 
\begin{lemma}\label{codim}
Let $A$ be a closed, densely defined operator on a Banach space $X,$
such that
$\rho(A)\ne\emptyset$, and $\lambda\in\partial\sigma_e(A)$. Let $M\subset X$
be a closed subspace of finite codimension and $\epsilon>0$. Then there exists a unit
vector $x\in M\cap D(A)$ such that $\|(A-\lambda)x\|<\epsilon$.
\end{lemma}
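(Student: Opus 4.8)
The plan is to reduce the unbounded statement to the known bounded-operator case via the resolvent, using the essential spectral mapping theorem \eqref{smp_resolv}. Fix $\mu\in\rho(A)$, which is possible since $\rho(A)\neq\emptyset$, and set $R=R(\mu,A)\in B(X)$. Since $\lambda\in\partial\sigma_e(A)$, the point $\nu:=(\mu-\lambda)^{-1}$ lies in $\sigma_e(R)\setminus\{0\}$ by \eqref{smp_resolv}; moreover, because taking inverses $z\mapsto(\mu-z)^{-1}$ is a homeomorphism of $\CC\setminus\{\mu\}$ onto $\CC\setminus\{0\}$ mapping $\sigma_e(A)$ onto $\sigma_e(R)\setminus\{0\}$, it also maps the topological boundary of $\sigma_e(A)$ to the boundary of $\sigma_e(R)\setminus\{0\}$, and $0\notin\sigma_e(R)$ would only help. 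Hence $\nu\in\partial\sigma_e(R)$.

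Next I would invoke the classical bounded-operator version of this lemma for $R$: if $\nu\in\partial\sigma_e(R)$ with $R\in B(X)$, then for every closed subspace $M$ of finite codimension and every $\delta>0$ there is a unit vector $y\in M$ with $\|(R-\nu)y\|<\delta$. This is standard fine spectral theory — it follows from Proposition~\ref{Sch} (the operator $R-\nu$ is not upper Fredholm since $\nu\in\partial\sigma_e(R)$, so for every compact $K$ the kernel of $R-\nu-K$ is infinite-dimensional, and one extracts approximate null vectors inside any finite-codimensional $M$ via a dimension count). I would state this as a known fact, perhaps citing \cite[Theorem 7.27]{Schechter} or deriving it in one line from Proposition~\ref{Sch}.

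Now comes the transfer back to $A$. Given the closed subspace $M$ of finite codimension and $\epsilon>0$, apply the bounded statement to $R$, $\nu$, $M$, and a parameter $\delta>0$ to be chosen, obtaining a unit vector $y\in M$ with $\|(R-\nu)y\|<\delta$. Set $x:=\|Ry\|^{-1}Ry$ (note $Ry\neq0$: if $Ry=0$ then $\nu y=(R-\nu)y$ would force $\|y\|<\delta/|\nu|$, impossible for small $\delta$). Since $Ry=R(\mu,A)y\in D(A)$, we have $x\in D(A)$; but $x$ need not lie in $M$, which is where I must be slightly careful. The fix: $Ry = \nu y + (R-\nu)y$, so $\|Ry-\nu y\|<\delta$, meaning $Ry$ is within $\delta$ of the vector $\nu y\in M$. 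I would instead run the bounded statement on the finite-codimensional subspace $R^{-1}(M)\cap$ (something) — more cleanly, apply the bounded lemma directly with the finite-codimensional subspace $\widetilde M := R(\mu,A)^{-1}(M)$, which is closed of finite codimension since $R$ is injective with closed finite-codimensional-preimage (as $\operatorname{codim}\widetilde M\le\operatorname{codim}M<\infty$). Pick a unit $y\in\widetilde M$ with $\|(R-\nu)y\|<\delta$; then $x:=Ry/\|Ry\|\in M\cap D(A)$, and from $(A-\lambda)Ry = (A-\mu)Ry + (\mu-\lambda)Ry = -y + \nu^{-1}Ry\cdot$ — more simply, using $(\mu-A)R=I$ one gets $(A-\lambda)Ry = (\mu-\lambda)Ry - y = \nu^{-1}(Ry-\nu y)$, so $\|(A-\lambda)Ry\| = |\nu|^{-1}\|(R-\nu)y\| < |\nu|^{-1}\delta$. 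Also $\|Ry\|\ge |\nu|-\delta\ge|\nu|/2$ for $\delta\le|\nu|/2$. Therefore $\|(A-\lambda)x\| = \|Ry\|^{-1}\|(A-\lambda)Ry\| < \frac{2}{|\nu|}\cdot\frac{\delta}{|\nu|} = \frac{2\delta}{|\nu|^2}$, and choosing $\delta<\min\{|\nu|/2,\ \epsilon|\nu|^2/2\}$ finishes the proof.

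The main obstacle — really the only delicate point — is the bookkeeping ensuring that the constructed vector genuinely lies in $M\cap D(A)$ and is a unit vector, since $R$ does not preserve $M$; passing to the preimage $R^{-1}(M)$ before applying the bounded result is the clean way around it, and one must also check $R^{-1}(M)$ still has finite codimension (immediate from injectivity of $R$) and that $Ry$ stays bounded away from $0$ (immediate from $\|Ry-\nu y\|<\delta$ and $\|y\|=1$). Everything else is a routine application of \eqref{smp_resolv} and the bounded-case fine spectral theory already cited in the paper.
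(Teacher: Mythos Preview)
Your proof is correct and follows essentially the same route as the paper's: both reduce to the bounded case via the resolvent and the spectral mapping \eqref{smp_resolv}, transfer the finite-codimensional subspace through the resolvent, and then normalize. The only cosmetic differences are that the paper first normalizes to $\lambda=0$ and works with $T=A(A-\mu)^{-1}$ (so the target point becomes $0\in\partial\sigma_e(T)$) and describes the auxiliary subspace as the image $M'=(A-\mu)(M\cap D(A))$, whereas you keep $\lambda$ general, use $R=(\mu-A)^{-1}$ with $\nu=(\mu-\lambda)^{-1}$, and describe the same subspace as the preimage $R^{-1}(M)$ --- these are the same objects up to bookkeeping.
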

\begin{proof}

We will rely on the fact, that the statement is true if $A$ is bounded,
see e.g.\ \cite[Proposition III.19.1 and Theorem III.16.8]{Muller}.
Without loss of generality we may assume that $\la=0$.

Let $\mu\in\rho(A)$ and let $T:=A(A-\mu)^{-1}=I+\mu(A-\mu)^{-1}$. Then $T$
is bounded. Moreover,
$\sigma_e(T)\setminus\{1\}=\{1+\frac{\mu}{z-\mu}:z\in\sigma_e(A)\}$
by \eqref{smp_resolv}. So $0\in\partial\sigma_e(T)$.
Let $M'={\rm Im}\, ((A-\mu)\upharpoonright_M)$. Then ${\rm codim}\, M'<\infty$, and as  $T \in B(X)$
there exists a sequence  $(x_n)_{n=1}^\infty\subset M'$ such that  $\|x_n\|=1$ for all $n$ and $Tx_n\to 0$
as $n \to \infty$. Set $y_n=(A-\mu)^{-1}x_n$ for  $n \in \NN$. Then $y_n\in M\cap D(A)$ for all $n$ and
$Ay_n\to 0$ as $n \to \infty.$ Moreover, since
$$Tx_n=x_n+\mu(A-\mu)^{-1}x_n = x_n+\mu y_n,$$ 
we have $\liminf_{n \to \infty}\|y_n\|=1/|\mu|>0.$ 
It remains to choose $a y_n$ for appropriate $a>0$ and $n.$
\end{proof}

The proof of Theorem \ref{proplower} is based on the next lemma.
\begin{lemma}\label{apprspec}
Let $A$ generate the $C_0$-semigroup $(T(t))_{t\ge 0}$ on a Banach
space $X$. Let either $\omega >s_e(A)$ or $\omega=s_e(A)$ and 
$(\omega+i\mathbb R)\cap \sigma_{e}(A)=\emptyset$. Assume that  either
\[
\sigma_p(A)\cap (\omega+i\mathbb R) \,\, \text{is infinite} 
\]
or 
\[
 \sigma_p(A)\cap (\omega +i\mathbb R)  \text{ \ is at most finite \quad and} 
    \quad \limsup_{|b| \to \infty} \|R(\omega +ib, A)\|=\infty. 
\]
Then there exist $(\mu_n)_{n=1}^\infty\subset\omega+i\mathbb R$ and
$(u_n)_{n=1}^\infty\subset D(A)$ such that
  $\|u_n\|=1$ for all $n \in \mathbb N,$ $\|(\mu_n-A)u_n\|\to 0$ as $n \to \infty$,   
  and for every $y^* \in D(A^*)$ we have
   $$\langle u_n, y^* \rangle \to 0, \quad n \to \infty.$$
 In particular, if $X$ is reflexive, then $(u_n)_{n=1}^\infty$ tends  weakly to 0 (since then
 $D(A^*)$ is dense in $X^*$). 
\end{lemma}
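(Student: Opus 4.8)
\textbf{Proof plan for Lemma~\ref{apprspec}.}
The plan is to produce approximate eigenvectors $u_n$ with $\|(\mu_n-A)u_n\|\to0$ that, in addition, are ``spread out'' so that they vanish on any prescribed finite family of functionals in $D(A^*)$. The construction will be inductive: having built $u_1,\dots,u_{n-1}$ and picked functionals $y_1^*,\dots,y_{n-1}^*\in D(A^*)$, I would produce $u_n$ lying in the finite-codimensional subspace $M_{n-1}=\bigcap_{j<n}\operatorname{Ker}y_j^*$ and satisfying $\|(\mu_n-A)u_n\|<1/n$, and then enlarge the list of functionals. The point of working inside $M_{n-1}$ is exactly that $\langle u_n,y_j^*\rangle=0$ for $j<n$, so that for each fixed $y^*\in D(A^*)$ one only needs to know that $y^*$ eventually enters the list; arranging an enumeration/diagonalization over a countable dense (in the graph norm) subset of $D(A^*)$, or simply adding $y^*$'s greedily, will then give $\langle u_n,y^*\rangle\to0$ for every $y^*\in D(A^*)$. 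The reflexive case is immediate since $D(A^*)$ is weak$^*$-dense, hence norm-dense, in $X^*$ (as $A$ is densely defined and closed), so weak-null follows.

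The core analytic step is: given $\omega$ as in the hypotheses, given $\e>0$ and given a closed subspace $M\subset X$ of finite codimension, find a unit vector $x\in M\cap D(A)$ and $\mu\in\omega+i\RR$ with $\|(\mu-A)x\|<\e$. There are two regimes. If $\omega=s_e(A)$ and $(\omega+i\RR)\cap\sigma_e(A)=\emptyset$ is \emph{excluded}, i.e.\ the line meets $\sigma_e(A)$ --- wait, the hypothesis is the opposite: either $\omega>s_e(A)$, or $\omega=s_e(A)$ with the line disjoint from $\sigma_e(A)$. In the first sub-case, if $\sigma_p(A)\cap(\omega+i\RR)$ is infinite, pick a sequence $\mu_k$ of distinct eigenvalues on the line with eigenvectors $v_k$; since these eigenvalues are isolated points of $\sigma(A)$ lying off $\sigma_e(A)$, the corresponding spectral (Riesz) projections $P_k$ are finite-rank and mutually ``independent'', so one can choose, for any finite-codimensional $M$, an index $k$ and a unit vector in $\operatorname{Im}P_k\cap M$ which is then an exact eigenvector in $M$; here $\e=0$ even works. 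If instead $\sigma_p(A)\cap(\omega+i\RR)$ is finite but $\limsup_{|b|\to\infty}\|R(\omega+ib,A)\|=\infty$, then I would pass to the resolvent: pick $\mu_0\in\rho(A)$, set $R=R(\mu_0,A)$, note $\|R(\omega+ib,A)\|\to\infty$ along some $|b_k|\to\infty$ forces, via the resolvent identity $R(\omega+ib,A)=R\,(I+(\mu_0-\omega-ib)R)^{-1}$ and a standard computation, that $(\omega+ib_k-\mu_0)^{-1}$ is an approximate eigenvalue of $R$ in the boundary-of-spectrum sense; then Lemma~\ref{codim} (applied to the bounded operator $R$, or directly in the form stated) yields unit vectors $x_k\in M\cap\operatorname{Im}R=M\cap D(A)$ with $\|(\omega+ib_k-A)x_k\|\to0$ after renormalizing, using that $b_k\to\infty$ keeps the relevant quantities under control. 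The sub-case $\omega=s_e(A)$ with the line disjoint from $\sigma_e(A)$ is handled the same way, since then $\sigma_p(A)\cap(\omega+i\RR)$ consists of isolated eigenvalues off $\sigma_e(A)$ and one of the two alternatives in the hypothesis must be invoked; note $0\in\partial\sigma_e(R)$ translates the line condition into the boundary condition needed for Lemma~\ref{codim}.

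I expect the main obstacle to be the bookkeeping in the resolvent-unbounded sub-case: one must convert $\|R(\omega+ib_k,A)\|\to\infty$ into genuine approximate eigenvectors of $R$ that additionally lie in a given finite-codimensional subspace, and then pull these back through $(\mu_0-A)^{-1}$ while controlling norms as $|b_k|\to\infty$ (so that the image of the unit ball of $M$ under $R$ is again finite-codimensional in $D(A)$, and the scaling $a_k x_k$ keeps $\|u_n\|=1$). The cleanest route is probably to observe that unboundedness of the resolvent on the line is equivalent to $(\omega+ib_k-\mu_0)^{-1}$ being approximate eigenvalues of $R$ lying on $\partial\sigma(R)$ or at least approaching $\sigma(R)$ from the resolvent side with blow-up, then quote the bounded-operator fact (as in \cite[Proposition III.19.1]{Muller}, already used in Lemma~\ref{codim}) that boundary-of-spectrum points admit approximate eigenvectors in every finite-codimensional subspace; the translation between $\|R(\omega+ib,A)\|\to\infty$ and this spectral picture, together with the infinite-eigenvalue alternative being the easy case, is where the technical care in \cite[Proposition 4.4]{Muller_Tom} is needed, and the proof here will follow that template with the modifications dictated by unboundedness of $A$ and by the extra requirement of vanishing on $D(A^*)$.
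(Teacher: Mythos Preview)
Your inductive/diagonalization scheme is far more elaborate than needed and, as written, has real gaps. The paper's proof rests on a single observation you have overlooked: once one has \emph{any} unit vectors $u_n\in D(A)$ and $\mu_n=\omega+ib_n$ with $|b_n|\to\infty$ and $\|(\mu_n-A)u_n\|\to 0$, the conclusion $\langle u_n,y^*\rangle\to 0$ for every $y^*\in D(A^*)$ is automatic. Indeed, pairing with $y^*\in D(A^*)$ gives
\[
\langle(\mu_n-A)u_n,\,y^*\rangle=\mu_n\langle u_n,y^*\rangle-\langle u_n,A^*y^*\rangle\longrightarrow 0,
\]
and since $|\langle u_n,A^*y^*\rangle|\le\|A^*y^*\|$ is bounded while $|\mu_n|\to\infty$, one obtains $\langle u_n,y^*\rangle\to 0$. (The paper phrases this via a one-dimensional splitting $u_n=m_n+\alpha_n y$ along $\operatorname{Ker}y^*$, but the content is the same.) Producing such $(u_n,\mu_n)$ is immediate from the hypotheses: in the infinite-eigenvalue alternative take normalized eigenvectors at eigenvalues $\omega+ib_n$ (the eigenvalues on the line lie outside $\sigma_e(A)$ and are therefore isolated in $\sigma(A)$, so necessarily $|b_n|\to\infty$); in the unbounded-resolvent alternative take $u_n=R(\omega+ib_n,A)w_n/\|R(\omega+ib_n,A)w_n\|$ for unit vectors $w_n$ witnessing the blow-up. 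No subspace constructions, no enumeration of $D(A^*)$, no appeal to Lemma~\ref{codim}.

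Your route, by contrast, requires placing approximate eigenvectors inside prescribed finite-codimensional subspaces and then exhausting $D(A^*)$ by a countable list. Both steps are problematic. The enumeration of a ``countable dense (in the graph norm) subset of $D(A^*)$'' presupposes separability of $X^*$, which is nowhere assumed, and the ``greedy'' alternative reaches only countably many functionals. The placement step is also not justified as you describe it: in the eigenvalue case a one-dimensional eigenspace $\operatorname{Im}P_k$ need not meet a given finite-codimensional $M$ at all, and a nontrivial combination $\sum_k c_kv_k\in M$ is not an eigenvector for any single $\mu_k$; in the resolvent case the points $(\omega+ib_k-\mu_0)^{-1}$ lie in $\rho(R)$ and tend to $0$, so neither Lemma~\ref{codim} nor the boundary-of-spectrum fact from \cite{Muller} applies at those points, and converting approximate null vectors of $R$ back to approximate eigenvectors of $A$ at $\omega+ib_k$ while staying inside $M$ is exactly the difficulty you have not resolved. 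All of this machinery is bypassed once you exploit $|\mu_n|\to\infty$.
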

\begin{proof}
By our assumptions, there exist a sequence $(b_n)_{n=1}^\infty$ with $|b_n|\to\infty$ and unit vectors
$u_n\in D(A)$ for $n \ge 1$ such that $\|(\omega +ib_n -A)u_n\|\to 0$ as $n \to \infty$.
Set $\mu_n=\omega+ib_n$. We show that
$$\langle u_n,y^*\rangle\to 0, \quad n \to \infty,$$
for each $y^*\in D(A^*)$. Let $y^*\in D(A^*)\subset X^*$ have norm 1.
Pick a vector $y\in D(A)$ with $\langle y,y^*\rangle> \frac12$. 
Let $M={\rm Ker}\, y^*$. Write $u_n=m_n+\al_n y$ for some $m_n\in M$
and  $\al_n\in\mathbb C$. Then
the sequences $(m_n)_{n=1}^\infty$ and $ (\al_n)_{n=1}^\infty$ are
bounded. Furthermore,
$$
\langle(\mu_n-A)u_n,y^*\rangle\to 0, \qquad n \to \infty,
$$ 
and
\begin{align*}
 \langle(\mu_n-A)u_n,y^*\rangle&=\langle(\mu_n-A)m_n,y^*\rangle+\al_n\langle(\mu_n-A)y,y^*\rangle\\
&= \al_n\mu_n\langle y,y^*\rangle-\langle m_n,A^*y^*\rangle-\al_n\langle Ay,y^*\rangle.
\end{align*}
Since the last two terms are uniformly bounded  and
$|\mu_n|\to\infty$, we have $\al_n\to 0$ as $n \to\infty$. 
It follows $\langle u_n,y^*\rangle=\alpha_n \langle y,y^*\rangle\to 0$.
\end{proof}

Let  $(T(t))_{t\ge 0}$ be a $C_0$-semigroup on a Banach space $X$, and set
\begin{equation*}
\|x\|_1:=\sup \{|\langle x, x^* \rangle|: x^* \in D(A^*), \|x^*\|\le 1\}
\end{equation*}
for $x\in X$. 
Then $\|\cdot\|_1$ is an equivalent norm satisfying
\begin{equation}\label{renorm}
\|x\|_1\le\|x\|\le \al\|x\|_1, \qquad \text{where \ }  \alpha:=\limsup_{t \to 0}\|T(t)\|,
\end{equation}
for all $x\in X$. See e.g.\ \cite[Theorems 1.3.1 and 1.3.5]{Ne92}.
Hence, renorming $X$ with $\|\cdot \|_1,$ we can make $D(A^*)$ a norming set for the Banach space $(X,\|\cdot\|_1)$.

Recall the definitions of the resolvent bound $s_R$ and of the notion of admissibility
given in Section \ref{prelim}.
Now we are ready to prove Theorem \ref{proplower} stated there and 
describing one of admissible $\omega$
in resolvent terms. More precisely, we show that if
$A$ generates a $C_0$-semigroup $(T(t))_{t\ge 0}$ on a
Banach space $X,$ then the number $s_R(A)$ given by \eqref{defsr}  is admissible.
\smallskip

\emph{Proof of Theorem \ref{proplower}.} \quad
Let $M$ be a closed subspace of finite codimension in $X$ given by
$M=\bigcap_{j=1}^k{\rm Ker}\, y^*_j$ for some functionals $y^*_1,\dots,y^*_k \in D(A^*).$
Let $\e >0$ and  $t_0 >0$ be fixed,
and set   $K:=\sup\{\|T(t)\|:0\le t\le t_0\}$.
Since the admissibility does not depend on equivalent renormings of the underlying space,
in view of \eqref{renorm}, we can assume that $D(A^*)$ is a norming set for $X.$

Let $s_R(A)=s_e(A)$ and  $\mu \in \sigma_{e}(A)\cap (s_R(A)+i\mathbb R)$.
Lemma \ref{codim} then yields  $x \in D(A)\cap M$ with $\|x\|=1$ and
$\|(A-\mu)x\|\le \e.$ Hence 
$$
\|T(t)x-e^{\mu t}x\|= \Bigl\|\int_0^t e^{\mu(t-s)}T(s)(\mu-A)xds\Bigr\|
 \le \e t_0 e^{s_R(A)t_0}K
$$
for all $t\in [0, t_0]$.

Let $s_R(A)\ge s_e(A)$ and $\sigma_{e}(A)\cap (s_R(A)+i\mathbb R) =\emptyset$.
Employing 
Lemma \ref{apprspec}, we find sequences $(\mu_n)_{n=1}^\infty\subset\mathbb C$
with $\mu_n=s_R(A)+i b_n$ for $n \in \mathbb N$ and $(u_n)_{n=1}^\infty\subset D(A)$ with
$\|u_n\|=1$ such that
\[
 \langle u_n, y^* \rangle \to 0 \quad  \text{for every} \,\,  y^* \in D(A^*) \quad \text{and} \qquad
\|(\mu_n-A)u_n\|\to 0, \quad n \to \infty.
\]

By \cite[Lemma 7.4]{Schechter},
there exists a finite-dimensional subspace $F\subset D(A)$ such that
$X=M\oplus F$.
Let $P$ be the projection onto $F$ with ${\rm Ker}\,P= M$.
 By the choice of $u_n$ we have
$\|Pu_n\|\to 0$ so that $\|(I-P)u_n\|\to 1$ as $n \to \infty$ and
$$\Bigl\|u_n-\frac{u_n-Pu_n}{\|u_n-Pu_n\|}\Bigr\|\to 0, \qquad n \to \infty.$$ 
Choose $n_0\in\NN$ such that
\begin{align*}
\Bigl\|u_{n_0}-\frac{u_{n_0}-Pu_{n_0}}{\|u_{n_0}-Pu_{n_0}\|}\Bigr\|
   &\le\min\left\{\frac{\e}{4K}, \frac{\e}{4e^{s_R(A)t_0}}\right\},\\
\|(\mu_{n_0}-A)u_{n_0}\| &\le \frac{\e}{4 t_0K\max\{1,e^{t_0s_R(A)}\}}.
\end{align*}
Set 
$$\mu=\mu_{n_0}\quad  \text{and}\quad 
    x=\frac{u_{n_0}-Pu_{n_0}}{\|u_{n_0}-Pu_{n_0}\|}.$$
For every $0\le t\le t_0,$ we have
$$
\|T(t)u_{n_0}-e^{\mu  t}u_{n_0}\|
= \Bigl\|\int_0^t e^{\mu(t-s)}T(s)(\mu_{n_0}-A)u_{n_0} ds\Bigr\|
 \le \e/4
$$
and
\begin{align*}
\|T(t)x-e^{\mu t}x\|&\le \|T(t)x-\!T(t)u_{n_0}\|+ \|T(t)u_{n_0}\!-e^{\mu t}u_{n_0}\|
+ \|e^{\mu t}u_{n_0}\!-e^{\mu t}x\| \\
 &\le K\|x-u_{n_0}\|+\e/4
   +e^{ts_R(A)}\|x-u_{n_0}\|\\
&<\e.
\end{align*}
This finishes the proof. $\hfill$ $\qed$

\begin{remark}
If $X$ is a reflexive Banach space, then the proof of the previous theorem is simpler. 
In this case it is not necessary to do the renormalization since $\|\cdot\|_1=\|\cdot\|$
by the density of $D(A^*)$, and the subspace $M$  in the statement of Theorem \ref{proplower}
can be any close subspace of finite codimension.
\end{remark}
\section{Acknowledgements}
We are grateful to the referee for valuable suggestions and remarks which
led to improvements of this paper. 
\section{Conflict of interests statement}

On behalf of all authors, the corresponding author states that there is no conflict of interest.

\section{Data availability statement}

Data sharing not applicable to this article as no datasets were generated or analysed during the current study.

\section{Competing interests statement}

The authors have no competing interests to declare that are relevant to the content of this article.


\begin{thebibliography}{100}

\bibitem{Anan_L} N. Anantharaman and M. L\'eautaud, 
\emph{Sharp polynomial decay rates for the damped wave equation on the torus,} with an appendix by S. Nonnenmacher,
Anal.\ PDE \textbf{7} (2014), 159--214.

\bibitem{Anosov} D. V. Anosov, \emph{Geodesic flows on closed Riemann manifolds with negative curvature,} Proc. Steklov Inst. Math. \textbf{90} (1967), AMS, Providence, RI, 1969.

\bibitem{Anselone} P. Anselone, \emph{Collectively Compact Operator Approximation Theory
and Applications to Integral Equations,} Prentice-Hall, Englewood Cliffs (NJ), 1971.

\bibitem{ABHN}
W.~Arendt, C.~J.~K. Batty, M.~Hieber, and F.~Neubrander, \emph{Vector-valued
  {L}aplace {T}ransforms and {C}auchy {P}roblems}, Monographs in Mathematics
  \textbf{96}, Birkh\"auser, Basel, 2001.

\bibitem{Asch} M. Asch and G. Lebeau, \emph{The spectrum of the damped wave operator for a bounded domain in $\mathbb R^2,$}
Experiment.\ Math.\ \textbf{12} (2003),  227--241.

\bibitem{Astala} K. Astala and H.-O. Tylli, \emph{On the bounded compact approximation property and
measures of noncompactness,} J. Funct.\ Anal.\ \textbf{70} (1987),  388--401.

\bibitem{Aubin}
T. Aubin, \emph{Nonlinear Analysis on Manifolds. Monge--Amp\`ere Equations},
Grundlehren der mathematischen Wissenschaften \textbf{252}, Springer, New York, 1982.

\bibitem{Ball}
J. M. Ball, \emph{
Global attractors for damped semilinear wave equations,}
Discrete Contin.\ Dyn.\ Syst.\ \textbf{10} (2004),  31--52.

\bibitem{Baranov} A. Baranov, V. Kapustin, and A. Lishanskii,
\emph{On hypercyclic rank one perturbations of unitary operators,}
Math.\ Nachr.\ \textbf{292} (2019), 961--968.

\bibitem{BarLeTa} C. Bardos, G. Lebeau, and J.  Rauch, \emph{
Sharp sufficient conditions for the observation, control, and stabilization of waves from the boundary,}
SIAM J. Control Optim. \textbf{30} (1992),  1024--1065.

\bibitem{BBT}
C. J. K. Batty, A. Borichev, and Y. Tomilov, \emph{{$L^p$}-Tauberian theorems and
  {$L^p$}-rates for energy decay,}
J. Funct.\ Anal.\ \textbf{270} (2016), 1153--1201.

\bibitem{Bessa} M. Bessa, J. Lopes Dias, and M. J. Torres, \emph{
On shadowing and hyperbolicity for geodesic flows on surfaces,}
Nonlinear Anal. \textbf{155} (2017), 250--263.

\bibitem{Block}  L. S. Block and W. A. Coppel, \emph{Dynamics in One Dimension,}
Lecture Notes in Math.\ \textbf{1513}, Springer, Berlin, 1992.


\bibitem{Bothe98} D. Bothe, \emph{Multivalued perturbations of $m$-accretive differential inclusions,}
Israel J. Math.\ \textbf{108} (1998), 109--138.

\bibitem{BurqC} N. Burq and H. Christianson, \emph{
Imperfect geometric control and overdamping for the damped wave equation,}
Comm.\ Math.\ Phys.\ \textbf{336} (2015), 101--130.


\bibitem{Cazenave}
T. Cazenave and A. Haraux,  \emph{An Introduction to Semilinear Evolution Equations,}
 Revised ed., Oxford University Press, New York, 1998.

\bibitem{Clark} D. N. Clark, \emph{One dimensional perturbations of restricted shifts,} J. Analyse Math.\
\textbf{25} (1972), 169--191.

\bibitem{Coron} J.-M. Coron, \emph{Control and Nonlinearity,} Mathematical Surveys and Monographs \textbf{136},
AMS, Providence, RI, 2007.


\bibitem{Chill} R. Chill, D. Seifert and Y. Tomilov, \emph{Semi-uniform stability of operator semigroups and energy
decay of damped waves,} Philos.\ Trans.\ Roy.\ Soc.\ A \textbf{378} (2020), no. 2185, 20190614, 24 pp.

\bibitem{Chill1} R. Chill, D. Seifert, R. Stahn, L. Paunonen, and Y. Tomilov, \emph{Non-uniform stability of damped contraction semigroups,} Anal.\ PDE \textbf{16} (2023),  1089--1132.

\bibitem{Cox} S. Cox and E. Zuazua, \emph{The rate at which energy decays in a damped string,}
 Comm.\ Partial Differential Equations \textbf{19} (1994), 213--243.

\bibitem{dalec} Yu. L. Daleckii and M. G.  Krein, \emph{
Stability of Solutions of Differential Equations in Banach Space,}
Transl.\ Math.\ Mon.\ \textbf{43}, AMS, Providence, RI, 1974.

\bibitem{Davies} E. B. Davies, \emph{Linear Operators and their Spectra,} Cambridge Studies in Advanced
Mathematics \textbf{106}, Cambridge University Press, Cambridge, 2007.

\bibitem{Desch}  W. Desch,  W. Schappacher,  G. F.  Webb,
\emph{Hypercyclic and chaotic semigroups of linear operators,}
 Ergodic Theory Dynam.\ Systems \textbf{17} (1997), 793--819.

 \bibitem{Dyatlov} S. Dyatlov, L. Jin, and S. Nonnenmacher, \emph{Control of eigenfunctions on surfaces of variable
curvature,} J. Amer.\ Math.\ Soc.\ \textbf{35} (2022),  361--465.


\bibitem{EnNa00} K.-J. Engel and R. Nagel,
\emph{One-Parameter Semigroups for Linear Evolution Equations,} Graduate
Texts in Math.\ \textbf{194}, Springer, New York, 2000.

\bibitem{Fialkow} L. A. Fialkow, \emph{Essential spectra of elementary operators,}
Trans.\ Amer.\ Math.\ Soc.\ \textbf{267} (1981),  157--174.

\bibitem{Freitas} P. Freitas and E. Zuazua, \emph{Stability results for the wave equation
with indefinite damping,} J. Differential Equations \textbf{132} (1996), 338--353.

\bibitem{Gallay} T.  Gallay, B. Texier, and K. Zumbrun,
\emph{On nonlinear stabilization of linearly unstable maps,} J. Nonlin.\ Sci.\ \textbf{27} (2017),  1641--1666.

\bibitem{Garab} A. Garab, M. Pituk, and C. P\"otzsche, \emph{Linearized stability in the context of an example by Rodrigues and Sol\'a-Morales,} J. Differential Equations \textbf{269} (2020),  9838--9845.

 \bibitem{Gramsch}  B. Gramsch and D. Lay, \emph{Spectral mapping theorems for essential spectra,}
 Math. Ann. \textbf{192} (1971), 17--32.


\bibitem{Grivaux} S. Grivaux, \emph{A hypercyclic rank one perturbation of a unitary operator,}
Math.\ Nachr.\ \textbf{285} (2012),  533--544.


\bibitem{Grill0} M. Grillakis, \emph{Linearized instability for nonlinear Schr\"odinger and Klein-Gordon
equations,} Comm.\ Pure Appl.\ Math.\ \textbf{41} (1988),  747--774.

\bibitem{Grill1} M. Grillakis, J. Shatah, and W. Strauss,
  \emph{Stability theory of solitary waves in the presence of symmetry I}, J. Funct.\ Anal.\
  \textbf{74} (1987), 160--197.


 \bibitem{Grill2} M.  Grillakis, J. Shatah, and W. Strauss,
 \emph{Stability theory of solitary waves in the presence of symmetry II}, J. Funct.\ Anal.\
 \textbf{94} (1990), 308--348.

 \bibitem{Guo} B.-Z. Guo, \emph{
On the exponential stability of $C_0$-semigroups on Banach spaces with compact perturbations,}
Semigroup Forum \textbf{59} (1999), 190--196.

 
\bibitem{Hadwin} D. Hadwin, \emph{Triangular truncation and normal limits of nilpotent operators,}
 Proc.\ Amer.\ Math.\ Soc.\ \textbf{123} (1995), 1741--1745.

\bibitem{Henry}
D. Henry, \emph{Geometric Theory of Semilinear Parabolic Equations,}
Lecture Notes in Mathematics \textbf{840},  Springer, Berlin, New York, 1981.

\bibitem{Herrero} D. Herrero,
\emph{Normal limits of nilpotents operators,} Indiana Univ.\ Math.\ J. \textbf{23} (1974), 1097--1108.

\bibitem{Jin}
 L. Jin, \emph{Damped wave equations on compact hyperbolic surfaces,} Comm.\ Math.\ Phys.\ \textbf{373} (2020),  771--794.

\bibitem{JL} R. Joly and C. Laurent, \emph{Stabilization for the semilinear wave equation with geometric control condition,}
Anal.\ PDE \textbf{6} (2013), 1089--1119.

 \bibitem{JL1} R. Joly and C. Laurent, \emph{Decay of semilinear damped wave equations: cases without geometric
 control condition,} Ann.\ H.\ Lebesgue \textbf{3} (2020), 1241--1289.

\bibitem{Lasiecka} I. Lasiecka and T. Seidman, \emph{Strong stability of elastic control systems with
dissipative saturating feedback,} Systems Control Lett.\ \textbf{48} (2003),  243--252.

\bibitem{Laurent} C. Laurent and M. L\'eautaud, \emph{Uniform observability estimates for linear waves,}
ESAIM Control Optim.\ Calc.\ Var.\ \textbf{22} (2016),  1097--1136.

\bibitem{LeR} J. Le Rousseau, G. Lebeau, and L. Robbiano, \emph{Elliptic Carleman estimates and Applications to
Stabilization and Controllability. Vol.~I, Dirichlet Boundary Conditions on Euclidean Space,}
Progress in Nonlinear Differential Equations and their Applications \textbf{97}, 2022.

\bibitem{LeR1} J. Le Rousseau, G. Lebeau, and L. Robbiano, \emph{Elliptic Carleman estimates and applications to stabilization and controllability, Vol.~II, General boundary conditions on Riemannian manifolds,} Progress in Nonlinear Differential Equations and their Applications \textbf{98}, 2022.

\bibitem{Lebeau} G. Lebeau, \emph{Equation des ondes amorties,} in: Algebraic and Geometric Methods
in Mathematical Physics, (Kaciveli 1993), Mathematical Physics Studies, \textbf{19},  Kluwer, Dordrecht, 1996, 73--109.

\bibitem{Muller}
V. M\" uller, \emph{Spectral Theory of Linear Operators and
Spectral Systems in Banach Algebras,} Second ed., Operator
Theory: Advances and Applications \textbf{139}, Birkh\"auser, Basel, 2007.

\bibitem{Muller_Tom} V. M\"uller and Yu. Tomilov,
\emph{``Large'' weak orbits of $C_0$-semigroups,}
Acta Sci.\ Math.\ (Szeged) \textbf{79} (2013), 475--505.

\bibitem{Nagy} B. Nagy, \emph{
Spectral mapping theorems for semigroups of operators,}
Acta Sci. Math. (Szeged) \textbf{38} (1976),  343--351.

\bibitem{Nakamura} Y. Nakamura, \emph{One-dimensional perturbations of the shift,}
Integral Equations Operator Theory \textbf{17} (1993),  373--403.

\bibitem{Ne92} J. M. A. M. van Neerven,\emph{
The adjoint of a semigroup of linear operators,}
Lecture Notes in Math.\ \textbf{1529}, Springer, Berlin, 1992.

\bibitem{Ne96} J. M. A. M. van Neerven,
\emph{The Asymptotic Behaviour of Semigroups of Linear Operators,}
Operator Theory: Advances and
Applications \textbf{88}, Birkh\"auser, Basel, 1996.

\bibitem{Nonnen} S. Nonnenmacher, \emph{Spectral theory of damped quantum chaotic systems,}
in:   Journ\'ees \'Equations aux D\'eriv\'ees Partielles, 2011, Exp. No. 9, arXiv:1109.0930.

\bibitem{Pazy} A. Pazy,
\emph{Semigroups of Linear Operators and Applications to Partial Differential Equations,}
Applied Mathematical Sciences \textbf{44},  Springer, New York, 1983.

\bibitem{Riviere} G. Rivi\'ere, \emph{Eigenmodes of the damped wave equation and small hyperbolic subsets,}
with an appendix by S. Nonnenmacher and G. Rivi\'ere, Ann.\ Inst.\ Fourier (Grenoble) \textbf{64} (2014),
 1229--1267.

\bibitem{Sola_Rod} H. M. Rodrigues and J. Sol\'a-Morales,
\emph{An example on Lyapunov stability and linearization},
J. Differential Equations \textbf{269} (2020), 1349--1359.

\bibitem{Sch83} E. Schechter, \emph{Evolution generated by semilinear dissipative plus compact operators,}
Trans.\ Amer.\ Math.\ Soc.\ \textbf{275} (1983), 297--308.

\bibitem{Schechter} M. Schechter, \emph{Principles of Functional Analysis,}
Second Ed.,  Graduate Studies in Math.\ \textbf{36}, AMS, Providence, RI, 2002.

\bibitem{SchenckP} E. Schenck, \emph{Energy decay for the damped wave equation under a pressure condition,}
Commun.\ Math.\ Phys.\ \textbf{300} (2010), 375--410.

\bibitem{Schenck}
E. Schenck, \emph{Exponential stabilization without geometric control,}
Math.\ Res.\ Lett.\ \textbf{18} (2011),  379--388.

\bibitem{Segal} I. Segal, \emph{Nonlinear semigroups,} Ann.\ Math.\ \textbf{78}  (1963), 339--364.

\bibitem{Simon} J. Simon, \emph{Compact sets in the space $L^p(0,T;B)$,}  Ann.\ Mat.\ Pura Appl.\ (4) \textbf{146} (1987), 65--96.

\bibitem{Shatah1} J. Shatah and W. Strauss,
\emph{Spectral condition for instability,} in: J. Bona, K. Saxton and R. Saxton (eds.),
Nonlinear PDE's, Dynamics and Continuum Physics, Contemp.\ Math.\ \textbf{255},
AMS, Providence, RI, 2000,  189--198.

\bibitem{Sjostrand} J. Sj\"ostrand, \emph{Asymptotic distribution of eigenfrequencies for damped wave equations,}
 Publ.\ Res.\ Inst.\ Math.\ Sci.\  \textbf{36} (2000), 573--611.

 \bibitem{Slemrod} M. Slemrod, \emph{Feedback stabilization of a linear control system in Hilbert space with an
a priori bounded control,} Math.\ Control Signals Systems \textbf{2} (1989), 265--285.

\bibitem{Slyusarchuk78} V. E. Slyusarchuk,
\emph{Problem of instability in the first approximation,}  Math.\ Notes.\ \textbf{23} (1978), 398--399.

\bibitem{Slyusarchuk85} V. E. Slyusarchuk,
\emph{On the instability of differential equations in the first approximation,}
Math.\ Notes \textbf{37} (1985),  72--77.

\bibitem{Slyus1} V. E. Slyusarchuk, \emph{Essentially unstable solutions of difference equations in a Banach space,}
Differential Equations \textbf{35} (1999),  992--999.

\bibitem{Slyus2} V. Yu. Slyusarchuk, \emph{Differential equations with absolutely unstable trivial solutions,}
Ukrainian Math.\ J. \textbf{65} (2014), 1406--1417.

\bibitem{Sola} J. Sol\'a-Morales,
\emph{Global instability and essential spectrum in semilinear evolution equations,}
Math.\ Ann.\ \textbf{274} (1986),  125--131.

\bibitem{Temam}
R. Temam, \emph{Infinite-Dimensional Dynamical Systems in Mechanics and Physics}, Second ed.,
Springer, New York, 1997.


\bibitem{Triggiani} R. Triggiani, \emph{Lack of uniform stabilization for noncontractive semigroups under
compact perturbation,} Proc.\ Amer.\ Math.\ Soc.\ \textbf{105} (1989),  375--383.

\bibitem{Voigt} J. Voigt,
\emph{A perturbation theorem for the essential spectral radius of strongly continuous semigroups,}
Monatsh.\ Math.\ \textbf{90} (1980),  153--161.

\bibitem{Webb} G. Webb, \emph{Growth bounds of solutions of abstract nonlinear differential equations,}
Differential Integral Equations \textbf{7} (1994),  1145--1152.

\bibitem{Weiss}
G. Weiss,  \emph{The resolvent growth assumption for semigroups on Hilbert spaces.}
J. Math.\ Anal.\ Appl.\ \textbf{145} (1990),  154--171.

\bibitem{Yak} S. Yakubov and Y. Yakubov, \emph{Differential-Operator Equations.
Ordinary and Partial Differential Equations,} Chapman  Hall/CRC Monographs and Surveys in
Pure and Appl.\ Math.\ \textbf{103}, Chapman Hall/CRC, Boca Raton, FL, 2000.

\bibitem{Ylinen} K. Ylinen, \emph{ Measures of noncompactness for elements of $C^*$-algebras,}
Ann.\ Acad.\ Sci.\ Fenn.\ Ser.\ A, Math.\ \textbf{6} (1981), 131--133.

\end{thebibliography}
\end{document}